\newcommand{\A}{\mathcal{A}}
\newcommand{\calI}{\mathcal{I}}
\newcommand{\calA}{\mathcal{A}}
\newcommand{\calF}{\mathcal{F}}
\newcommand{\calK}{\mathcal{K}}
\newcommand{\calS}{\mathcal{S}}
\newcommand{\calQ}{\mathcal{Q}}
\newcommand{\calL}{\mathcal{L}}
\newcommand{\calE}{\mathcal{E}}
\newcommand{\tr}{\mathrm{Tr}}
\newcommand{\norm}[1]{\| #1 \|}
\newcommand{\inner}[2]{\langle #1, #2 \rangle}
\newcommand{\C}{\mathbb{C}}
\newcommand{\Span}{\mathrm{span}}
\newcommand{\ext}{\mathrm{ext}}
\newcommand{\rmax}{r_{\max}}
\newcommand{\la}{\langle}
\newcommand{\ra}{\rangle}
\newcommand{\R}{\mathbb{R}}
\def\be{\begin{equation}}
\def\ee{\end{equation}}
\def\bi{\begin{itemize}}
\def\ei{\end{itemize}}
\newcommand{\X}{\mathcal{X}}
\newcommand{\Herm}{\mathcal{S}}
\newcommand{\gram}{\mathrm{Gram}}
\newcommand{\real}{\mathbb{R}}
\newcommand{\rank}{\mathrm{rank}}
\newcommand{\CPSD}{\mathcal{CS}_+}
\newcommand{\CP}{\mathcal{CP}}
\newcommand{\DNN}{\mathcal{DNN}}
\newcommand{\sfT}{{\sf T}}
\newcommand{\CS}{\mathcal{CS}_+}
\newcommand{\CPSDR}{\mathrm{cpsd}\textnormal{-rank}}
\newcommand{\CL}{\mathcal{GL}}
\newcommand{\calh}{\mathcal{H}}
\newcommand{\call}{\mathcal{L}}
\newcommand{\pbf}{\mathbf{p}}
\newcommand{\cbf}{\mathbf{c}}
\newcommand{\cornm}{\mathrm{Cor}(n,m)}
\newcommand{\comment}[1]{}
\newtheorem{remark}{Remark}[section]
\newtheorem{example}{Example}[section]
\newtheorem{thm}{Result}
\newtheorem*{thmnewww}{Theorem}
\newcommand{\Tr}{\mathrm{Tr}}
\newcommand{\pabxy}{(p(ab|xy))}
\def\be{\begin{equation}}
\def\ee{\end{equation}}
\newcommand{\PSDR}{\mathrm{rank}_{psd}}
\newcommand{\CPR}{\mathrm{cp}\textnormal{-rank}}
\title{Completely positive semidefinite rank}
\author{
A. Prakash
\thanks{Centre for Quantum Technologies, National University of Singapore and Nanyang Technological University, Singapore.
\email{aprakash@ntu.edu.sg}}.
\and
J. Sikora
\thanks{Centre for Quantum Technologies, National University of Singapore,
\email{cqtjwjs@nus.edu.sg}}.
\and
A. Varvitsiotis
\thanks{Centre for Quantum Technologies, National University of Singapore and Nanyang Technological University, Singapore,
\email{avarvits@gmail.com}}.
\and
Z. Wei
\thanks{Centre for Quantum Technologies, National University of Singapore and  Nanyang Technological University, Singapore, \email{weizhaohui@gmail.com}}.
}
\begin{document}

\maketitle

\begin{abstract}
{An $n\times n$} matrix $X$ is called completely positive
semidefinite (cpsd) if there  exist $d\times d$ Hermitian positive
semidefinite {matrices} $\{P_i\}_{i=1}^n$ (for some $d\ge 1$) such
that  $X_{ij}= \tr(P_iP_j),$ for all {$i,j \in \{ 1, \ldots, n \}$}.
The $\CPSDR$ of a cpsd matrix is the smallest $d\ge 1$ for which
such a representation is possible. In this work  we initiate the
study of the $\CPSDR$ which we motivate twofold. First, the $\CPSDR$
is a natural non-commutative analogue of the 
{completely positive rank} of a completely positive matrix.
{Second}, we show that the $\CPSDR$ is physically motivated as it
can  be used to upper and lower bound the size of a quantum system
needed to generate a quantum~behavior.

In this work  we {present}  several  properties of the cpsd-rank.
Unlike the completely positive rank  which is at most  quadratic in the size of the matrix, no general upper bound is  known on the cpsd-rank of a cpsd matrix.  In fact, we show that the cpsd-rank can be exponential in terms of the size. Specifically,   for any $n\ge1,$ we construct  a  cpsd matrix of size $2n$ whose  cpsd-rank  is $2^{\Omega(\sqrt{n})}$. Our construction is based on  Gram  matrices of Lorentz  cone vectors, which we show are cpsd. The proof  relies crucially on   the connection between the cpsd-rank  and quantum behaviors. In particular, we use  a  {known} lower bound on the
{size of matrix representations of}
extremal quantum correlations
{which we apply to}
high-rank extreme points of the {$n$-dimensional} elliptope.

{Lastly, we study cpsd-graphs, i.e., graphs $G$ with the property that every doubly nonnegative matrix whose support is given by $G$ is cpsd. We show that a graph  is cpsd  if and only if  it has no odd cycle of length at least {$5$}  as a subgraph.  This coincides with  the characterization of cp-graphs.}
\end{abstract}

\begin{keywords}
completely positive semidefinite cone,
cpsd-rank,
Lorentz cone,
elliptope,
Bell scenario,
quantum behaviors,
quantum correlations,
cpsd-graphs
\end{keywords}


\section{Introduction}

\subsection{Setting the scene}
Consider a family of vectors $\{v_i\}_{i=1}^n$  such that the angle between any pair of them
is at most $\pi/2$.
A necessary and sufficient condition for showing that the
configuration  $\{v_i\}_{i=1}^n$  admits an  isometry  to {some}
nonnegative orthant  is that the $n\times n$ matrix $(\la v_i,v_j\ra_{1\le i,j\le
n}),$  formed by collecting all  pairwise inner products of the
vectors $\{v_i\}_{i=1}^n,$ is \emph{completely positive}. Formally, a
symmetric $n\times  n $ matrix $X$ is called {\em completely
positive} (cp)  if there exist vectors $\{p_i\}_{i=1}^n\subseteq \R^d_+,$
for some $d\ge 1,$ such that $X_{ij}=\la
p_i,p_j\ra,$ for all $1\leq i,j \leq n$.

The set of  $n\times n$ completely positive matrices, denoted  by $\CP^n$, forms a full-dimensional, pointed, closed convex cone whose structure   has been   extensively studied  (e.g. see~\cite{CP}). Linear conic programming over the $\CP$ cone is  particularly interesting due to its  expressive power. Specifically, any nonconvex quadratic program having  both  binary and continuous variables can be cast as a linear  conic  program over the $\CP$ cone \cite{Bur07}. In particular, this implies that  optimization over the $\CP$ cone   is intractable. On the positive side, there exist inner \cite{Las12} and outer   \cite{Par00} semidefinite programming hierarchies that can be used to  approximate  the $\CP$ cone.

In this work we focus on  a generalization  of the embeddability
question considered  above: When can  a family of vectors
$\{v_i\}_{i=1}^n $ whose pairwise inner products are
nonnegative be isometrically embedded into a
cone of Hermitian positive semidefinite  matrices?  Throughout, {we}
denote by $\calh^d_+$ the cone  of $d\times d$  Hermitian positive
semidefinite (psd) matrices {and by $\calS_+^d$ the set of $d\times d$ symmetric psd matrices}.  Formally, we are asking for the
existence of matrices  $\{P_i\}_{i=1}^n\subseteq
{\calh^{d}_+}$, for some  ${d\ge1}$, satisfying
$$\la
v_i,v_j\ra=\tr(P_iP_j), \text{ for all }1 \leq i,j\leq n.$$

Since the direct sum of two psd matrices is again psd,  the set of $n\times n$  matrices of the form $( \tr(P_iP_i)_{1\le i,j\le n}), $
where  $\{P_i\}_{i=1}^n\subseteq \calh^d_+$ (for some $d\ge 1)$,
forms a convex cone. This {set of matrices is} denoted by $\CPSD^n$ and is known  as
the cone of  {\em completely positive semidefinite {(cpsd)}
matrices}.

The $\CPSD^n$ cone was introduced recently to provide linear conic formulations for various quantum  graph parameters~\cite{LP14,R14b}.  Subsequently, it was shown  in \cite{SV} that underlying these formulations is the fact that the set of quantum behaviors   can be expressed as the projection of an affine section  of the $\CPSD^n$ cone (cf. Theorem \ref{thm:conicformulation}).

Clearly,  for  every $n\ge 1$ we have that
$  \CP^n \subseteq \CPSD^n \subseteq \DNN^n$, where we denote by $\DNN^n$ the set of $n\times n$ {\em doubly nonnegative} matrices, i.e., matrices that are  positive semidefinite and  entrywise nonnegative. For the rightmost inclusion recall that   the trace inner product of two psd matrices is a nonnegative scalar.  The leftmost inclusion holds since nonnegative vectors correspond to diagonal psd matrices.

It is known that   $\CP^n= \DNN^n$ for   $n\le 4$   \cite{MM61}, whereas  for $n\ge 5,$ all
inclusions  given above are known to be strict. In particular, it follows from \cite{FW} that $\CP^6\ne \CPSD^6$  and by \cite{FGPRT} that  $\CP^5\ne \CPSD^5$.  Furthermore,
it was  shown in  \cite{FW}  that $\CPSD^5\ne \DNN^5$ and in \cite{LP14}  that ${\rm cl}(\CPSD^5) {\subsetneq}  \DNN^5$, where ${\rm cl}(\CPSD^n)$ denotes the closure of $\CPSD^n$. Lastly, it was shown in \cite{LP14} that for any matrix $X$ whose support is {a cycle}  we have that $X\in \CP$ if and only if $X\in \CPSD$. Furthermore, it is known that for every odd cycle $C_{2t+1} \ (t\ge 2)$ there exists a  matrix in   $\DNN\setminus \CP$ whose support is given by $C_{2t+1}$ (see \cite[Theorem~2.12]{CP}).  Combined with the above, this fact gives a family of matrices in $\DNN\setminus \CPSD$  that are supported by $C_{2t+1},$ for all $t\ge 2$.

Not many things are  known  concerning  the  structure of   {$\CPSD^n$}. In
particular it is not known whether {$\CPSD^n$} is closed. The
closure of $\CPSD^n$  was  characterized  in  \cite{BLP} as
the set of doubly nonnegative matrices that admit a Gram
factorization using positive elements in a certain finite  von
Neumann algebra,  an infinite dimensional analogue of $\CPSD$-factorizations (cf. Section \ref{thm:neccondition1}). Furthermore,
combining results from \cite{LP14} and~\cite{Ji13}   it follows that linear
optimization over {$\CPSD^n$} is~NP-hard.

Given a  completely positive matrix $X\in \CP^n$, the smallest integer $d\ge1$ for which there exist vectors $\{p_i\}_{i=1}^n\subseteq \R^d_+$  satisfying  $X_{ij}=\la p_i,p_j\ra,$ for all $1 \leq i,j \leq n$ is called the {{\em completely positive rank (cp-rank)}  of $X$, and is  denoted by $\CPR(X)$.

 A  very useful property of the $\CPR$ is that it admits an atomic reformulation.
Specifically,   the  $\CPR(X)$  of a matrix   $X\in \CP^n$ can be equivalently defined as  the smallest  $d\ge 1$ for which there exist   vectors $\{x_i\}_{i=1}^d\subseteq \R^n_+$ satisfying~$X=\sum_{i=1}^d  x_ix_i^\sfT$.

 Studying the properties of the  $\CPR$   is a problem that has received significant  attention.
 By the conic analogue  of Carath\'eodory's
 Theorem  (e.g. see  \cite[Theorem 1.34]{CP}) and the atomic reformulation of the $\CPR$   described above it follows that  for any   $X\in\CP^n$   we have   $\CPR(X)\le \binom{n+1}{2}$.
  At present, the  best upper bound is     $ {n^2\over 2}+O(n^{3/2}),$  for any $X\in \CP^n$
 \cite{BSU}. Moreover, this upper bound  is asymptotically  tight with respect to the  Drew-Johnson-Loewy  lower bound of $\big\lfloor {n^2\over 4}\big\rfloor,$ for $n\ge 4$ \cite{DJL}.

The definition of the $\CPSD$ cone suggests the  following generalization of the notion of $\CPR$, where nonnegative vectors are replaced by Hermitian psd matrices.
\medskip 

\begin{definition}The {\em completely positive semidefinite rank} {($\CPSDR$)} of a matrix $X\in \CPSD^n$, denoted by  $\CPSDR(X)$,  is defined as the {least} $d\ge 1$ for which there exist matrices  $\{P_i\}_{i=1}^n\subseteq\calh^d_+$  such that  $X_{ij}=\tr(P_iP_j), $\  for all $i,j\in [n]$.
\end{definition}

Given a matrix $X\in \CPSD^n$, we refer to  any family of matrices
$\{P_i\}_{i=1}^n\subseteq\calh^d_+$ such that  $X_{ij}=\tr(P_iP_j),
$\  for all $i,j\in [n]$, as a {\em $\CPSD$-factorization} of $X$.
Furthermore, we  call a  $\CPSD$-factorization  {\em size-optimal}
if  the size of {each} $P_i$ is equal to  $\CPSDR(X)$.

The notion of the cpsd-rank was introduced recently in \cite{FGPRT}
(as a   variant of the psd-rank) although its properties
were not studied there. Our goal in  this work is to   initiate the
study  of the 
{cpsd-rank} of a cpsd matrix.

The study of the $\CPSDR$  is motivated as follows. First, the $\CPSDR$ is  a
natural non-commutative  generalization of the well-studied notion
of $\CPR$. {Second},  and most {important}, we show  that  the
 $\CPSDR$ enjoys strong physical motivation. Specifically, we
show that some fundamental questions concerning the $\CPSDR$ are
intimately  related to long standing   open problems {on} the foundations of
quantum mechanics. This is explained in detail in the
following~section.

\subsection{Physical  motivation}\label{sec:quantumcorrelations}

A  {\em Bell scenario} is a {physical} experiment involving two
{spatially separated}
{parties}, Alice and Bob, who perform {local} measurements on a
shared physical system. For our purposes,  imagine that
Alice and Bob are individually given a  closed  box, whose inner
{workings} are unknown to both parties. The boxes work as follows:
Alice's (resp. Bob's) box has
{$m_A$ (resp. $m_B$) different buttons. After  each party presses  a button, the box displays
one out of $o_A$ (resp. $o_B$) possible~outcomes}.

It is instructive to think of the boxes as measurement devices and the content  of the boxes as a physical system that  each party has in his possession. Furthermore, each button
corresponds to a
choice of measurement that can be performed on the system
and the {displayed outcome}
corresponds to  the outcome of the measurement.

{The object of interest in a Bell scenario are {the} statistics that can be obtained via such a pair of boxes. Specifically,
suppose that Alice and Bob synchronize their clocks and  distance themselves from each other so that they
cannot communicate. After they are sufficiently far apart, they simultaneously press a button on their box
(chosen randomly and independently) and record the button that they
pressed and the displayed outcome. After repeating the whole process a sufficient number of times\footnote{To be precise, each time this is repeated each party  should receive a new  copy of the box.}
Alice and Bob
meet to calculate the joint conditional probabilities $p(ab|xy)$, i.e.,
the probability that upon pressing buttons
$x\in[m_A]$, $y\in [m_B]$, they  obtained the outcomes $a\in [o_A]$ and $b\in [o_B]$, respectively.
These probabilities   are  arranged in a vector  $\pbf=\pabxy$ of length $m_Am_Bo_Ao_B$ which we call a~{\em behavior}.

{Suppose} that after the parties
compare their   statistics  they note that for some $x,y,a,b$ it is the case that
$p_A(a|x) p_B(b|y) \ne p(ab|xy)$, where $p_A(a|x)$ and
$p_B(b|y)$ denote the local marginal distributions of Alice and Bob,
respectively. This indicates  that the  outcomes  of the boxes are
statistically dependent.

A {\em local hidden variable}  (LHV)  model would  account for  this dependence by asserting    that  the two systems have interacted at some point in the past, and as a result they both depend  on some  ``hidden'' variable $k$. Once the  value of $k$ is taken into account,  then the probabilities decouple, i.e., $p(a|x,k)p(b|y,k)= p(ab|xyk)$.
Formally, we say that a behavior $\pbf=\pabxy$ admits a LHV model  ({also referred to as being local}) if there exist $ k_i\ge 0, \ m_{a}^{s,i}\ge 0,\  n_{b}^{t,i}\ge 0$ satisfying  $ \sum_i k_i = 1,$  and
  $\sum_a  m_{a}^{x,i} =\sum_b n_{b}^{y,i} = 1$ for all $x,y,i,$ such that   $ p(ab|xy)=\sum_{i} k_im_a^{x,i}n_b^{y,i},$ for all~$a,b,x,y$.

In this work we focus on the  case where  Alice and Bob
share a
 {\em quantum mechanical system} (e.g. each box   contains one of a pair of entangled
particles).
In this setting   the system is governed by the laws of quantum mechanics. In particular, the  outcome statistics  can be calculated  using  the mathematical formalism of quantum mechanics which we sketch below (see also Section~\ref{sec:preliminaries}).

According to the postulates of quantum mechanics, the state of the  quantum system shared by  Alice and Bob corresponds to  a   Hermitian psd matrix $\rho$ acting on $\C^d\otimes \C^d,$ with trace equal to $1$. The  measurement process is   described  by  two families of  Hermitian psd {operators}  $\{ M_{a|x}\}_a$  and $\{ N_{b|y}\}_b$, {each} acting on $\C^d,$ such that $\sum_a M_{a|x}=\sum_bN_{b|y}=I_d,$ for all $x,y$.  We say the behavior  $\pbf=\pabxy$ is {\em quantum} if there exists a quantum state $\rho$ and measurement operators $\{ M_{a|x}\}_a$  and $\{ N_{b|y}\}_b$ such that
$p(ab|xy) = \Tr \left( (M_{a|x} \otimes N_{b|y} ) \rho\right)$, for all $a,b,x,y$.

In this work, we refer to a Bell scenario with $m_A,m_B$ measurement settings and $o_A,o_B$ measurement outcomes as {an} $(m_A,m_B,o_A,o_B)$-scenario.
Furthermore, we  denote by $\calQ$ (resp. $\calL$) the set of  quantum (resp. local) behavior. To stress the dependence on the underlying Bell scenario  we use the notation $\calQ(m_A,m_B,o_A,o_B).$

Clearly, {$\calL\subseteq \calQ$} and it  is one of the pillars of
quantum information theory that there exist behaviors  that are
quantum but do not admit a local hidden variable  explanation, i.e.,
$ \calQ$ is a strict superset of $\calL$ \cite{B64,B66}. For an  overview on Bell
scenarios and the   properties of   quantum behaviors  the
 reader is referred to \cite{Brunner14}.

Any  quantum behavior $\pabxy$ is {\em no-signaling}, i.e., each
party's  local marginal distribution is independent of the other
party's choice of measurement. Algebraically, this is expressed {as}
$\sum_bp(ab|xy)= \sum_bp(ab|xy')$ for all $ y\ne y'$, and
symmetrically that $\sum_ap(ab|xy)= \sum_ap(ab|x'y)$ for all $x\ne
x'$. This implies that the local marginal distributions $(p_A(a|x))$
and $(p_B(b|y))$ are well-defined. In {a} Bell scenario where all the measurements
 have binary outcomes, we call a behavior $\pabxy$ {\em
unbiased} if
$p_A(a|x)=p_B(b|y)=1/2, \text{ for all } a,b,x,y. $

Given a quantum behavior $\pbf$, we refer to any ensemble $\{\rho,\{
M_{a|x}\}_a, \{ N_{b|y}\}_b\}$ such that $p(ab|xy) = \Tr\left((M_{a|x}
\otimes N_{b|y} )\rho\right)$ for all $a,b,x,y$ as a {\em quantum
representation} of~$\pbf$. A quantum behavior
$\pbf=\pabxy$ admits multiple quantum representations.  We say that
$\pbf\in \calQ$ admits a \emph{$d$-dimensional (quantum)
representation} if there exists a quantum representation $\{\rho,\{
M_{a|x}\}_a, \{ N_{b|y}\}_b\}$, where $\rho$ acts on $\C^d\otimes
\C^d$ {and $\{ M_{a|x}\}_a$ and $\{ N_{b|y}\}_b$ each act on} $\C^d$.
We denote  by $\mathcal{D}(\pbf)$ the smallest integer $d\ge 1$ for which the quantum behavior $\pbf=\pabxy$ admits a $d$-dimensional representation.

The starting point for our work  is a recent result from  \cite{SV} which states {that}  the set of quantum behaviors (resp. local)  can be expressed as a projection of an affine section of the completely positive semidefinite cone (resp. completely positive cone).

\medskip 
\begin{theorem}[\cite{SV}]\label{thm:conicformulation}
 Consider a behavior    $\pbf=\pabxy$ and set $n:=m_A o_A+m_B o_B$. The {behavior}   $\pbf$ is quantum (resp. local)  if and only if there exists  a matrix $R\in \CPSD^n$ (resp. {$\CP^n$})  indexed by
{$([m_A] \times [o_A]) \cup ([m_B] \times [o_B])$}
 such that
\begin{align}
 & {\sum_{a = 1}^{o_A} \sum_{a' = 1}^{o_A}} R_{xa,x'a'}=1, \text{ for all } x,x' {\in [m_A]}; \label{eq:cond1}\\
 & {\sum_{a = 1}^{o_A} \sum_{b = 1}^{o_B}} R_{xa,yb}=1, \text{ for all } x {\in [m_A]}, \ y {\in [m_B]};\label{eq:cond2}\\
  & {\sum_{b=1}^{o_B} \sum_{b'=1}^{o_B}} R_{yb,y'b'}=1, \text{ for all } y,y' {\in [m_B]};\label{eq:cond3}\\
  & R_{xa,yb}=p(ab|xy), \text{ for all } {a \in [o_A], \ b \in [o_B], \ x \in [m_A], \ y \in [m_B]}. \label{eq:cond4}
\end{align}
\end{theorem}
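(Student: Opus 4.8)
The plan is to prove both equivalences by unfolding membership in $\CPSD^n$ (resp.\ $\CP^n$) as a Gram factorization and then reading off what the affine conditions \eqref{eq:cond1}--\eqref{eq:cond4} say about the factors. A matrix $R$ lies in $\CPSD^n$ precisely when it is the Gram matrix, in the trace inner product, of Hermitian psd matrices; indexing such a family by the same set as $R$, write it as $\{P_{xa}\}\cup\{Q_{yb}\}\subseteq\calh^D_+$, so that $R_{xa,x'a'}=\Tr(P_{xa}P_{x'a'})$, $R_{yb,y'b'}=\Tr(Q_{yb}Q_{y'b'})$ and $R_{xa,yb}=\Tr(P_{xa}Q_{yb})$. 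Setting $A_x:=\sum_aP_{xa}$ and $B_y:=\sum_bQ_{yb}$, conditions \eqref{eq:cond1}--\eqref{eq:cond3} become $\Tr(A_xA_{x'})=\Tr(A_xB_y)=\Tr(B_yB_{y'})=1$ for all indices. Taking $x=x'$ gives $\|A_x\|_F=1$, so the equality case of Cauchy--Schwarz forces all the $A_x$ to be equal, and likewise all the $B_y$; then $\|A-B\|_F^2=\Tr(A^2)-2\Tr(AB)+\Tr(B^2)=0$ gives $A=B$. Hence \eqref{eq:cond1}--\eqref{eq:cond4} hold iff there is a single Hermitian psd matrix $\sigma$ with $\Tr(\sigma^2)=1$, $\sum_aP_{xa}=\sum_bQ_{yb}=\sigma$ for all $x,y$, and $\Tr(P_{xa}Q_{yb})=p(ab|xy)$. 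Thus the theorem is equivalent to the assertion that $\pbf$ is quantum iff such $\sigma$, $\{P_{xa}\}$, $\{Q_{yb}\}$ exist.

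For the ``only if'' direction I would first use a routine purification-and-padding argument to obtain a pure-state representation $p(ab|xy)=\langle\psi|M_{a|x}\otimes N_{b|y}|\psi\rangle$ with $|\psi\rangle=\sum_i\sqrt{\lambda_i}\,|i\rangle|i\rangle$ in Schmidt form on $\C^d\otimes\C^d$ and $\sum_i\lambda_i=1$. Letting $\Lambda$ be the diagonal matrix of the $\lambda_i$, set
$$P_{xa}:=\Lambda^{1/4}M_{a|x}\Lambda^{1/4},\qquad Q_{yb}:=\Lambda^{1/4}N_{b|y}^{\sfT}\Lambda^{1/4}.$$
Both are psd, since transposition preserves positivity of Hermitian matrices. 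Because $\sum_aM_{a|x}=\sum_bN_{b|y}=I$, we get $\sum_aP_{xa}=\sum_bQ_{yb}=\Lambda^{1/2}=:\sigma$, hence $\Tr(\sigma^2)=\Tr\Lambda=1$; and a direct computation, using $\langle\psi|M\otimes N|\psi\rangle=\Tr(\Lambda^{1/2}M\Lambda^{1/2}N^{\sfT})$, yields $\Tr(P_{xa}Q_{yb})=p(ab|xy)$. Taking $R$ to be the trace-Gram matrix of $\{P_{xa}\}\cup\{Q_{yb}\}$ then exhibits $R\in\CPSD^n$ satisfying \eqref{eq:cond1}--\eqref{eq:cond4}.

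For the ``if'' direction, start from a $\CPSD$-factorization $\{P_{xa}\}\cup\{Q_{yb}\}\subseteq\calh^D_+$ of $R$ and extract $\sigma$ as in the first paragraph. Since $\sigma-P_{xa}=\sum_{a'\ne a}P_{xa'}\succeq0$ we have $0\preceq P_{xa}\preceq\sigma$, and similarly for $Q_{yb}$; restricting the ambient space to $\mathrm{range}(\sigma)$ we may assume $\sigma\succ0$. Then $M_{a|x}:=\sigma^{-1/2}P_{xa}\sigma^{-1/2}$ and $N_{b|y}:=(\sigma^{-1/2}Q_{yb}\sigma^{-1/2})^{\sfT}$ are POVMs, and $\rho:=(\sigma\otimes I)\kb{\Omega}(\sigma\otimes I)$, where $|\Omega\rangle=\sum_i|i\rangle|i\rangle$, is a pure state because $\|(\sigma\otimes I)|\Omega\rangle\|^2=\Tr(\sigma^2)=1$. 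Using the identity $\langle\Omega|(X\otimes Y)|\Omega\rangle=\Tr(XY^{\sfT})$ together with cyclicity of the trace, one checks that $\Tr\big(\rho(M_{a|x}\otimes N_{b|y})\big)=\Tr(\sigma M_{a|x}\sigma N_{b|y}^{\sfT})=\Tr(P_{xa}Q_{yb})=p(ab|xy)$, so $\pbf$ is quantum.

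The local/$\CP$ statement is the commutative shadow of all this: $R\in\CP^n$ iff $R$ is a Gram matrix of nonnegative vectors $\{p_{xa}\}\cup\{q_{yb}\}\subseteq\R^D_+$, the same Cauchy--Schwarz argument produces a common marginal $s=(\sqrt{k_i})_i$ with $\|s\|^2=1$, and one reads off an LHV model via $k_i=s_i^2$, $m_a^{x,i}=(p_{xa})_i/s_i$, $n_b^{y,i}=(q_{yb})_i/s_i$ (the converse being the diagonal version of the construction above). The one genuinely non-obvious step, which I expect to be the main obstacle, is the symmetric splitting $\Lambda^{1/4}(\cdot)\Lambda^{1/4}$ in the forward direction: the naive factorization $p(ab|xy)=\Tr\big((\Lambda^{1/2}M_{a|x}\Lambda^{1/2})N_{b|y}^{\sfT}\big)$ realizes \eqref{eq:cond4} but produces marginals $\Lambda$ and $I$ which neither agree with each other nor have Frobenius norm $1$, whereas distributing the amplitude as $\Lambda^{1/4}(\cdot)\Lambda^{1/4}$ makes both marginals equal $\Lambda^{1/2}$ with $\Tr(\Lambda^{1/2}\cdot\Lambda^{1/2})=\Tr\Lambda=1$, so that \eqref{eq:cond1}--\eqref{eq:cond3} hold simultaneously. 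The purification reduction, the passage to $\mathrm{range}(\sigma)$, and the transpose/conjugation bookkeeping are routine.
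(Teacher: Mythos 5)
The paper itself gives no proof of Theorem~\ref{thm:conicformulation}: it is imported verbatim from the reference \cite{SV} and used as a black box. Your argument is a complete and correct proof, and as far as I can tell it follows the same route as the cited source: unfold $R\in\CPSD^n$ (resp.\ $\CP^n$) as a trace-Gram matrix of psd factors $\{P_{xa}\}\cup\{Q_{yb}\}$, observe that conditions~\eqref{eq:cond1}--\eqref{eq:cond3} plus equality in Cauchy--Schwarz force all the block-marginals $\sum_a P_{xa}$ and $\sum_b Q_{yb}$ to collapse to a single matrix $\sigma$ with $\Tr(\sigma^2)=1$, and then pass back and forth to a quantum representation via the maximally entangled state $\ket{\Omega}$ and the identity $\bracket{\Omega}{(X\otimes Y)\,\Omega}=\Tr(XY^{\sfT})$. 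The only step requiring some care is exactly the one you flag: in the forward direction the amplitude must be split symmetrically as $\Lambda^{1/4}(\cdot)\Lambda^{1/4}$ rather than as $\Lambda^{1/2}(\cdot)$, so that both marginals equal $\Lambda^{1/2}$ and all three normalization constraints hold simultaneously; your treatment of this is right. The details you label routine (purification and padding to equalize local dimensions, restricting the factors to $\mathrm{range}(\sigma)$ so that $\sigma^{-1/2}$ exists, and the transpose bookkeeping needed because $\bracket{\Omega}{(X\otimes Y)\,\Omega}$ involves $Y^{\sfT}$) all check out, as does the commutative specialization to the $\CP$/LHV case where the indices with $s_i=0$ drop out of all sums because nonnegativity forces the corresponding coordinates of every $p_{xa}$, $q_{yb}$ to vanish.
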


For a fixed $\pbf$ we denote by $\mathcal{A}(\pbf) $ the  affine
subspace of $\calS^n$  {consisting} of matrices that satisfy
\eqref{eq:cond1}, \eqref{eq:cond2},\eqref{eq:cond3} and
\eqref{eq:cond4}, {where $\calS^n$ is the set of $n\times n$
symmetric matrices}.

{By combining  the linear conic formulations from Theorem
\ref{thm:conicformulation} with {a} reduction from  \cite{SVW15}
we have that $\mathcal{D}(\pbf)$ corresponds to}  the smallest size
of a $\CPSD$-factorization over all matrices {$R\in \CPSD^n\cap
\mathcal{A}(\pbf)$}. Using the notion of the cpsd-rank this can be
equivalently expressed as follows.

\medskip 
\begin{theorem}[\cite{SV, SVW15}]\label{thm:minsizequantumcorrelation}
For any $ \pbf=\pabxy\in \calQ$ we have that
\[ {\mathcal{D}(\pbf) = {\min} \left\{ \CPSDR(R): {R\in \CPSD^n\cap \mathcal{A}(\pbf)}\right\}}. \]
\end{theorem}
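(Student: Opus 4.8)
The plan is to establish the two inequalities separately. For the inequality $\mathcal{D}(\pbf) \le \min\{\CPSDR(R): R\in \CPSD^n\cap\mathcal{A}(\pbf)\}$, I would start from a matrix $R\in\CPSD^n\cap\mathcal{A}(\pbf)$ together with a $\CPSD$-factorization $\{P_i\}$ of size $d=\CPSDR(R)$, where the index set is $([m_A]\times[o_A])\cup([m_B]\times[o_B])$, so that $R_{xa,yb}=\Tr(P_{xa}P_{yb})$ and analogously for the other index combinations. The key point is to turn this factorization into a genuine quantum representation of $\pbf$ acting on $\C^d\otimes\C^d$. Conditions \eqref{eq:cond1} and \eqref{eq:cond3} say that for each fixed $x$, the matrices $\{P_{xa}\}_{a\in[o_A]}$ sum to something whose pairwise traces against the full block are consistent with a measurement: concretely, $\sum_a\sum_{a'}\Tr(P_{xa}P_{x'a'})=1$ forces $\sum_a P_{xa}$ to be (after a normalization argument using that $\Tr(QQ')\ge 0$ and the rank-one/PSD structure) the identity, or at least to behave like a POVM element, so one defines $M_{a|x}:=\sum_a P_{xa}$-normalized measurement operators and similarly $N_{b|y}$ from $\{P_{yb}\}$. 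Then $\rho$ is built from the Gram structure — essentially $\rho = \sum \ketbra{e_i}{e_i}$-type construction or the maximally entangled state conjugated appropriately — so that $\Tr((M_{a|x}\otimes N_{b|y})\rho) = \Tr(P_{xa}P_{yb}) = p(ab|xy)$ by \eqref{eq:cond4}. This direction is essentially the reduction of \cite{SVW15} specialized to this conic formulation, and I would cite that reduction to handle the technical normalization bookkeeping.

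For the reverse inequality $\mathcal{D}(\pbf) \ge \min\{\CPSDR(R): \dots\}$, I would start from a $d$-dimensional quantum representation $\{\rho, \{M_{a|x}\}_a, \{N_{b|y}\}_b\}$ realizing $p(ab|xy)=\Tr((M_{a|x}\otimes N_{b|y})\rho)$. Writing $\rho=\sum_k \lambda_k \kb{\psi_k}$ or more directly using a purification, I would define PSD matrices $P_{xa}$ and $P_{yb}$ of size $d$ (or $d^2$, which would give the wrong bound, so care is needed here) whose pairwise traces reproduce all four families of constraints. The natural choice is $P_{xa} = (M_{a|x}\otimes I)\sqrt{\rho}$-type objects; the crucial check is that $\Tr(P_{xa}P_{x'a'})$, $\Tr(P_{xa}P_{yb})$, $\Tr(P_{yb}P_{y'b'})$ all equal the required values, which uses $\sum_a M_{a|x}=I_d$ and the trace-one normalization of $\rho$. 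This produces a matrix $R\in\CPSD^n\cap\mathcal{A}(\pbf)$ with a factorization of size $d$, hence $\CPSDR(R)\le d$, and taking the minimum over all such $R$ gives the inequality. Again this is the content of Theorem~\ref{thm:conicformulation} combined with \cite{SVW15}, so I would invoke those results rather than rederive the factorization from scratch.

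The main obstacle is getting the dimension count exactly right in both directions: the naive conversions between quantum states on $\C^d\otimes\C^d$ and $\CPSD$-factorizations tend to produce factorizations of size $d^2$ (because a bipartite state lives on a $d^2$-dimensional space) rather than $d$, or conversely to produce $d^2$-dimensional quantum systems from $d$-dimensional factorizations. The reduction of \cite{SVW15} is precisely what resolves this, by exploiting the block structure of the index set $([m_A]\times[o_A])\cup([m_B]\times[o_B])$ and the no-signaling constraints \eqref{eq:cond1}--\eqref{eq:cond3} to show that one can always assume the relevant factorization matrices act on a space of dimension exactly $\mathcal{D}(\pbf)$. So the real work of the proof is a careful invocation of that reduction together with verifying that the affine constraints \eqref{eq:cond1}--\eqref{eq:cond4} match the measurement/state normalization conditions; once that dictionary is set up, the equality of the two quantities is immediate.
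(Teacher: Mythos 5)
The paper does not actually prove this theorem: it invokes the conic formulation of Theorem~\ref{thm:conicformulation} from \cite{SV} together with a dimension-preserving reduction from \cite{SVW15}, exactly as you propose. So your overall plan matches, but two concrete steps in your interleaved sketch are wrong and would need repair before the argument could stand alone.

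For the direction $\mathcal{D}(\pbf)\le\min\{\CPSDR(R):\dots\}$, conditions \eqref{eq:cond1}--\eqref{eq:cond3} do \emph{not} force $\sum_a P_{xa}$ to be the identity. Writing $\Sigma_x:=\sum_a P_{xa}$ and $\Sigma'_y:=\sum_b P_{yb}$, those constraints read $\Tr(\Sigma_x\Sigma_{x'})=\Tr(\Sigma_x\Sigma'_y)=\Tr(\Sigma'_y\Sigma'_{y'})=1$ for all $x,x',y,y'$; expanding $\|\Sigma_x-\Sigma_{x'}\|^2$ and the analogous mixed and $y$-side differences shows that all these partial sums coincide with a single PSD matrix $\Sigma$ satisfying $\Tr(\Sigma^2)=1$, and generically $\Tr(\Sigma)\ne 1$ so $\Sigma$ is far from $I$. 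The correct step is to restrict to the support of $\Sigma$ (which is all of $\C^d$ in a size-optimal factorization, by Lemma~\ref{lem:fullrank}$(ii)$), set $M_{a|x}:=\Sigma^{-1/2}P_{xa}\Sigma^{-1/2}$ and $N_{b|y}:=\bigl(\Sigma^{-1/2}P_{yb}\Sigma^{-1/2}\bigr)^{\sfT}$ (these are genuine POVMs summing to $I_d$), and take the pure state with state vector $\vecc(\Sigma)\in\C^d\otimes\C^d$; the condition $\Tr(\Sigma^2)=1$ is precisely its normalization, and a cyclicity-of-trace computation gives $\Tr\bigl((M_{a|x}\otimes N_{b|y})\rho\bigr)=\Tr(P_{xa}P_{yb})$.

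For the reverse direction, the proposed factors $P_{xa}=(M_{a|x}\otimes I)\sqrt{\rho}$ are neither Hermitian nor PSD and live on $\C^{d^2}$; you have correctly located the difficulty (the $d$ versus $d^2$ count), but the construction you wrote does not resolve it. The fix in \cite{SVW15} purifies $\rho$, attaching the ancilla to one party so that the state becomes pure and bipartite with Alice's side still $\C^d$, hence with Schmidt rank $r\le d$. Taking a Schmidt decomposition $\Psi=\sum_{i=1}^r\sqrt{\lambda_i}\,u_i\otimes v_i$, with $U,V$ the isometries from $\C^r$ onto the Schmidt vectors and $\Lambda=\mathrm{diag}(\lambda_1,\dots,\lambda_r)$, one sets $P_{xa}:=\Lambda^{1/4}(U^*M_{a|x}U)\Lambda^{1/4}$ and $P_{yb}:=\Lambda^{1/4}(V^*N_{b|y}V)^{\sfT}\Lambda^{1/4}$. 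These are $r\times r$ Hermitian PSD matrices; since $\sum_a U^*M_{a|x}U=I_r$, $\sum_b V^*N_{b|y}V=I_r$ and $\Tr(\Lambda)=1$, the Gram matrix they generate lies in $\CPSD^n\cap\mathcal{A}(\pbf)$ and has $\CPSDR\le r\le d$. Without the purification-plus-Schmidt step your sketch cannot recover $d$ in place of $d^2$.
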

\medskip

For a fixed Bell scenario,  it is a fundamental problem  to understand whether  one can place  a {\em finite} upper bound on the size of the quantum system necessary to generate all quantum behaviors. In mathematical terms,  the question is to decide  whether
$ \max\{ \mathcal{D}(\pbf): \pbf\in Q\}$ is  finite or  infinite, {where, {again}, $Q$ denotes  the set of all quantum behaviors corresponding to  this fixed Bell scenario.

There is no  clear consensus  whether finite dimensions always
suffice.  It follows from the work of Tsirelson~\cite{TS87} that in a $(m_A,m_B,2,2)$-scenario finite dimensions are {sufficient} to generate all {\em unbiased} behaviors (cf. Section \ref{sec:tsirelson}).
Furthermore, in a
{$(1,1,o_A,o_B)$}-scenario (i.e., exactly one
measurement setting per party),  the sets of
local and quantum behaviors coincide  and additionally, it  was shown by Jain, Shi, Wei and Zhang \cite{JSWZ}
that  $\mathcal{D}(\pbf)$ is equal to the positive semidefinite rank
of the nonnegative matrix ${P=(p(ab)_{1 \leq a \leq o_A, 1 \leq b \leq o_B}) \in \R^{o_A\times o_B}_+}$  (cf. Section \ref{psdvscpsd}).   The latter quantity is upper
bounded (e.g. by ${\min\{o_A,o_B\})}, $ so again  in this case
{the maximum of} $\mathcal{D}(\pbf)$ over all behaviors is finite. On the other
hand, P\'al and V\'ertesi in \cite{Pal10} provide {numerical} evidence
that finite dimensional quantum systems do not suffice  in the
(3,3,2,2)-scenario, although this still remains to be proven analytically.

Our motivation for introducing and studying the $\CPSDR$  is that it
provides a novel  approach to address  the finite vs. infinite
representability {problem} of the set of quantum behaviors.
Specifically, using Theorem \ref{thm:minsizequantumcorrelation} we
immediately get  two sufficient conditions, in terms of the
$\CPSDR$,  that allow us to either prove or disprove that {finite-dimensional} systems  suffice to generate all
quantum~behaviors.

\medskip
\begin{proposition}\label{eq:proposition}
Fix a  $(m_A,m_B,o_A,o_B)$-scenario,  set $ n:=m_Ao_A+m_Bo_B$ and let $\calQ$ be the corresponding set of quantum behaviors. We have that:
\medskip
\bi
 \item[$(i)$] If  $\max\{ \CPSDR(X): X\in \CPSD^n \}<+\infty$ then  $ \max\{ \mathcal{D}(\pbf): {\pbf\in {\calQ}}\}<+\infty$;\medskip
 \item[$(ii)$]
Say that
 for every $d\ge 1$ there exists $p_d\in \calQ$ such that  for any
 $R\in \CPSD^n\cap \mathcal{A}(\pbf_d)$  we have   $\CPSDR(R_d)>d$. Then
 $ \max\{  \mathcal{D}(\pbf): {\pbf\in {\calQ}}\}=+\infty$.
 \ei
\end{proposition}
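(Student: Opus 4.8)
The proof plan is essentially to unwind the definitions and combine them with Theorem~\ref{thm:minsizequantumcorrelation}. Both parts are contrapositive-style implications about when the supremum $\max\{\mathcal{D}(\pbf):\pbf\in\calQ\}$ is finite or infinite, and the bridge is the identity $\mathcal{D}(\pbf)=\min\{\CPSDR(R):R\in\CPSD^n\cap\mathcal{A}(\pbf)\}$.

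\medskip

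\textbf{Part $(i)$.} Suppose $M:=\max\{\CPSDR(X):X\in\CPSD^n\}<+\infty$. Fix any $\pbf\in\calQ$. By Theorem~\ref{thm:conicformulation} the intersection $\CPSD^n\cap\mathcal{A}(\pbf)$ is nonempty, so pick any $R$ in it. Since $R\in\CPSD^n$, we have $\CPSDR(R)\le M$. Now apply Theorem~\ref{thm:minsizequantumcorrelation}: $\mathcal{D}(\pbf)=\min\{\CPSDR(R'):R'\in\CPSD^n\cap\mathcal{A}(\pbf)\}\le\CPSDR(R)\le M$. As the bound $M$ is independent of $\pbf$, taking the supremum over $\pbf\in\calQ$ gives $\max\{\mathcal{D}(\pbf):\pbf\in\calQ\}\le M<+\infty$.

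\medskip

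\textbf{Part $(ii)$.} Suppose that for every $d\ge 1$ there is a behavior $\pbf_d\in\calQ$ such that every $R\in\CPSD^n\cap\mathcal{A}(\pbf_d)$ satisfies $\CPSDR(R)>d$. Then the minimum over this set also exceeds $d$, i.e. $\min\{\CPSDR(R):R\in\CPSD^n\cap\mathcal{A}(\pbf_d)\}>d$, which by Theorem~\ref{thm:minsizequantumcorrelation} says $\mathcal{D}(\pbf_d)>d$. Hence $\max\{\mathcal{D}(\pbf):\pbf\in\calQ\}\ge\mathcal{D}(\pbf_d)>d$ for every $d$, forcing the supremum to be $+\infty$.

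\medskip

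There is really no obstacle here: both implications are immediate once Theorem~\ref{thm:minsizequantumcorrelation} is in hand, the only subtlety being to make sure the quantifiers line up (in $(i)$ one needs a bound uniform over behaviors, which is exactly what the hypothesis supplies; in $(ii)$ the hypothesis is already phrased as a statement about \emph{all} feasible $R$, which is precisely what is needed to lower-bound the minimum). The one place to be slightly careful is the typographical mismatch in the statement of $(ii)$ between $\pbf_d$, $\mathcal{A}(\pbf_d)$ and $R_d$; the intended reading is that $R$ ranges over $\CPSD^n\cap\mathcal{A}(\pbf_d)$, and this should be stated cleanly in the proof. I would present the argument in two short paragraphs mirroring the two bullets, each a two-line chain of inequalities, and emphasize that the content of the proposition is entirely carried by Theorem~\ref{thm:minsizequantumcorrelation}.
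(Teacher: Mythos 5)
Your proof is correct and is exactly the argument the paper intends: the paper itself does not spell out a proof of this proposition, stating only that both implications follow ``immediately'' from Theorem~\ref{thm:minsizequantumcorrelation}, and your two short chains of inequalities (together with the observation from Theorem~\ref{thm:conicformulation} that $\CPSD^n\cap\mathcal{A}(\pbf)$ is nonempty whenever $\pbf\in\calQ$) are precisely that unwinding.
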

\medskip

The value of   Proposition \ref{eq:proposition} is that {it}
identifies a concrete   mathematical problem, stripped {off} all
quantum mechanical context,  whose resolution would  settle  the
question of finite vs. infinite  dimensionality of the set of
quantum behaviors:

\medskip

\begin{center}{\em Question:} Is $ \max\{ \CPSDR(X): X\in \CPSD^n \}$ finite or infinite?
\end{center}

\medskip

The question concerning  the finiteness of the $\CPSDR$ was already stated in~\cite{FGPRT}.
As already mentioned,  if we pose  the same question but  replace  {$\CPSD^n$}  by the cone of  completely positive matrices, the answer is {known}: The  $\CPR$  can be at most  quadratic in the size of the matrix. The proof of this fact relies  on  the atomic reformulation for the cp-rank. On the other hand,  we are not aware of such an atomic  {reformulation} for the $\CPSDR$ and this limits the analogies with the {$\CP^n$}~case.

\subsection{Contributions and paper organization}\label{sec:contributions}

In this work we initiate the systematic study of the  $\CPSDR$ and by establishing a connection to quantum behaviors, we make  the case  that it admits   significant physical motivation.

In Section \ref{sec:preliminaries} we introduce all necessary notation, definitions and background  material on Linear Algebra, Quantum Mechanics, Convexity and Graph Theory.
  We begin our study of the $\CPSDR$ in  Section \ref{sec:properties}  where our goal is to give a general introduction and   collect  basic properties.
Specifically, in Section \ref{sec:basic} we consider matrix operations that preserve  the property of being cpsd and examine how they affect the $\CPSDR$.  In Section~\ref{sec:lowerbounds}  we identify an analytic and support based lower bound  on the $\CPSDR$ and note that both the bounds
never exceed the size of the matrix. Lastly, in Section \ref{sec:comparison} we relate the $\CPSDR$ to other notions of matrix
ranks.

As  was already mentioned, no general  upper bound  is currently known on the $\CPSDR$ of a {cpsd matrix}.
In view of this,  there are two natural research directions: First, identify families of cpsd matrices for which we can place an upper   bound on the $\CPSDR$ and {second}, identify cpsd matrices with high $\CPSDR$.
 As we describe below, in this work we make   progress in both directions.

In Section \ref{sec:gramlorentz} we consider the  question of upper bounding the $\CPSDR$ for certain  families of $\CPSD$ matrices. {We focus on}
 {\em Gram-Lorentz} matrices,  denoted by $\CL$, defined as the set of Gram matrices of Lorentz cone vectors (also known as {the} second-order cone)  which we introduce and study in Section \ref{sec:embedding}. Furthermore, in Section~\ref{sec:embedding} we revisit  and give a simplified proof of  a construction from \cite{FW}, where it {is} shown that the $m$-dimensional  Lorentz cone can be isometrically embedded into a psd cone of size $2^{\Omega(m)}$. This  implies  that   Gram-Lorentz  matrices  are cpsd. Furthermore,  in Section~\ref{sec:glmatrices}   we show that for any $X\in \CL$ we have that $\CPSDR(X)\le 2^{O(\rank(X))}$.

As it turns out, Gram-Lorentz matrices are also useful  to construct  matrices   that are cpsd but not completely positive. The first such  separation $\CP^6 {\subsetneq} \CPSD^6$ was  in fact shown   using $\CL$ matrices~\cite{FW}.   In Section \ref{sec:cpsdcpseparation} we generalize the construction from \cite{FW} and identify a sufficient condition for constructing matrices in $\CPSD\setminus \CP$.

Lastly, Gram-Lorentz matrices are also relevant in the context of
quantum behaviors.  In view of Theorem \ref{thm:conicformulation}
given above,  any      $\calK\subseteq \CPSD$ corresponds to a
subset of the set of quantum behaviors. In Section~\ref{sec:unbiasedarelorentz} we
introduce and study {\em Gram-Lorentz  behaviors}, i.e., the quantum
behaviors that correspond to  $\calK=\CL$. Since $\CL$ matrices have
bounded cpsd-rank, all $\CL$ behaviors can be generated using  a
{finite-dimensional} quantum system. This  is again very interesting
since, as we mentioned in Section \ref{sec:quantumcorrelations}, it
is not known whether there exists a finite  upper  bound on the size
of a quantum system necessary  to generate  all quantum behaviors
corresponding to a  Bell scenario.

In Section \ref{sec:explowerbounds} we turn to the problem of constructing  cpsd matrices with high $\CPSDR$.
Interestingly,  Gram-Lorentz matrices turn out  to be the right tool  to address  this problem.
Indeed, {for our}  main result in    Section \ref{sec:explowerbounds} (cf. Theorem~\ref{thm:lboundgl}) we construct a family of $\CL$ matrices whose cpsd-rank is exponential in terms of their~size.

\medskip
\begin{thm}\label{res:first}
For  any {integer} $n\ge 1$ there  exists a matrix   $X_n\in \CL^{2n}$ such that
\be\label{cdwferfer}
 \CPSDR(X_n)\ge  {\sqrt{2}^{\lfloor \rmax(n)/ 2 \rfloor}}, \text{ where } {r_{\max}(n) := \left\lfloor {(\sqrt{1+8n}-1)/2}\right\rfloor}.
 \ee
  In particular, if  we take  $C_n$  to be  an extreme point of the $n$-dimensional elliptope $ \calE_n:=\{X\in \calS^n_+: X_{ii}=1,\  \forall i\in [n]\}$ satisfying   $\rank(C_n)=r_{\max}(n)$, then \eqref{cdwferfer} {holds for}
$$
X_n:= \begin{pmatrix}J+C_n & J-C_n\\J-C_n& J+C_n\end{pmatrix},
$$
where $J$ is the $n\times n$ matrix of all 1's.
\end{thm}

The starting point for proving  Result \ref{res:first} is  Theorem \ref{thm:minsizequantumcorrelation}. Specifically, it follows
by Theorem~\ref{thm:minsizequantumcorrelation} that  given a quantum behavior $\pbf \in \calQ,$  for any $  R\in \CPSD^n\cap \mathcal{A}(\pbf)$ we have that $\CPSDR(R)\ge\mathcal{D}(\pbf)$.
 Consequently, in order to derive   Result \ref{res:first}  it suffices to identify a sequence  of  Gram-Lorentz  behaviors $(\pbf_n)_{n\in \mathbb{N}}$  for which  all quantum representations require a quantum system of size exponential in~$n$.
We show that  for  any $n\ge 1$, there exists a
Gram-Lorentz behavior $\pbf_n$ corresponding to the $(n,n,2,2)$-scenario with the property that  $\mathcal{D}(\pbf_n)\ge  {\sqrt{2}^{\lfloor \rmax(n)/ 2 \rfloor}}$  (cf.   Theorem \ref{thm:GLbehaviorlowerbound}).  This is the {main step in} the proof of Result \ref{res:first}.

The first step towards constructing these Gram-Lorentz behaviors is  to  restrict to Bell scenarios where each party  has two possible outcomes, which we label by $\{\pm 1\}$. In this  case, instead of working with quantum  behaviors 
we can equivalently work with the corresponding {\em correlation vectors}.  These are just the vectors that  correspond to the expected value  of the product of the player's individual outcomes.
This correspondence is  explained  in detail in Section~\ref{sec:tsirelson}.
Quantum correlation vectors   turn out to be extremely important  for this work  due to a  lower bound  on the size of  operator representations of extremal quantum correlations. This  result is implicit in~\cite{TS87} and  is explained in detail in Section \ref{sec:dsvefwefwe} in the Appendix.

In Section \ref{sec:glbehaviorsstuff} we construct  a family   Gram-Lorentz behaviors  $(\pbf_n)_{n\in \mathbb{N}}$ satisfying  $\mathcal{D}(\pbf_n)\ge  {\sqrt{2}^{\lfloor \rmax(n)/ 2 \rfloor}}$ (cf.   Theorem \ref{thm:GLbehaviorlowerbound}). To do this,
in Section~\ref{sec:corrtobeha},   we  translate the aforementioned lower bound  in  terms of  Gram-Lorentz behaviors.  Specifically, we show that to any extremal quantum correlation, {represented as a matrix $C$,} we can associate a Gram-Lorentz behavior $\pbf_C$  such that $\mathcal{D}(\pbf_C)\ge 2^{\Omega(\rank(C))}$.
  In view of this,
  it suffices to identify high-rank extremal quantum correlations. In Section~\ref{sec:extremepoints} we focus on the case $m_A=m_B=:n$ and show that the extreme points of the  $n$-dimensional elliptope $\calE_n$  are also  extreme points of the set of quantum correlations.
  This allows {us} to conclude the proof as it  is well-known that for any $n\ge1 $ there exist extreme points of $\calE_n$ whose rank is equal to $\rmax(n)$.
   In Section \ref{sec:puttingeverythingtogether} we put everything  together,
   and also  provide  an explicit family of Gram-Lorentz behaviors realizing this exponential lower bound. {Lastly, Section \ref{sec:highcpsdrank} is dedicated to the  proof of Result~\ref{res:first} where we  construct cpsd matrices with exponential  cpsd-rank.

In  Section \ref{sec:cpsdgraphs}   we study  cpsd-graphs, i.e., graphs $G$ with the property that every $\DNN$ matrix whose support is given by $G$ is also {in} $\CPSD$.   The analogous notion of cp-graphs has been  extensively studied (e.g. see \cite[Section~2.5]{CP}).
In particular, the class of cp-graphs admits an exact combinatorial characterization: A graph is cp  if and only if  it does not contain an odd cycle $C_{2t+1}$ ($t\ge 2$)  as a subgraph~\cite{KB}.

{We show} that the same characterization extends to  cpsd-graphs:
\medskip 
  \begin{thm}\label{res:third}
{A graph is cpsd  if and only if  it has no $C_{2t+1}$-subgraph $(t\ge 2)$.}
\end{thm}
\medskip

To prove Result \ref{res:third}, in Section \ref{thm:neccondition1} we generalize a construction from \cite{FW} and  \cite{LP14} and identify  a sufficient condition for constructing doubly-nonnegative matrices that do not admit a Gram factorization  using positive elements in any tracial von Neumann algebra. On the other hand,
 the closure of $\CPSD^n$  was  characterized  in  \cite[Theorem 4.6]{BLP} as
the set of psd matrices that admit a Gram
factorization using positive elements in a certain tracial   von
Neumann algebra.
  Thus,  our sufficient condition can be used to  construct matrices in $\DNN\setminus {\rm cl}  (\CPSD)$. Using these matrices,
 in Section~\ref{sec:whatever} (cf. Theorem~\ref{thm:cpsdgraphs}) we give the proof of Result \ref{res:third}.
 
\vspace{1.1cm}
\section{Preliminaries}\label{sec:preliminaries}

\subsection*{Linear Algebra}

We denote by $[d]$ the set $\{1,\dotsc,d\}$.
The standard  orthonormal basis of $\C^d$ is denoted by $\{e_i\}_{i=1}^d$, which we consider as column vectors.
The linear span of the vectors~$\{x_i\}_{i=1}^n$ is  denoted by
$\Span({\{x_i\}_{i=1}^n})$.
We write $x\circ y$ for the entrywise product of two
vectors $x,y$.

We denote the set of $d\times d$ Hermitian (resp. symmetric)
matrices by $\calh^d$ (resp.~$\calS^d$). An operator $X$ is called
an (orthogonal) {\em projector} if it satisfies $X=X^*=X^2$,
{where $X^*$ denotes  the conjugate transpose of $X$}. The
entrywise product of two matrices $X,Y$ is  denoted by $X\circ Y$
and their Kronecker product by $X\otimes Y$. Throughout this work we
equip $\calh^d$ with the Hilbert-Schmidt inner product $\la
X,Y\ra:=\tr(XY^*)$. The {\em direct sum} of two matrices  $X, Y$ is
the matrix $\left(\begin{smallmatrix} X& 0\\0 &
Y\end{smallmatrix}\right)$ which we   denote by $X \oplus Y$.  The
matrix with all entries equal to $1$ is denoted by $J$ and the
identity matrix  by $I$.

A matrix $X\in\calh^d$ is called {\em positive semidefinite} (psd) if
$\psi^*X\psi\ge 0$ for all $\psi\in \C^d$. The set of $d\times d$
Hermitian psd (resp. symmetric psd) matrices forms a closed convex
cone  denoted by $\calh^d_+$ (resp.~$\calS^d_+)$.

Let $\left(\mathcal{V},\la \cdot,\cdot\ra\right)$ be an inner
product space. The  {\em Gram matrix} of a family of vectors
$\{x_i\}_{i=1}^n\subseteq \mathcal{V}$, denoted by
$\gram({\{x_i\}_{i=1}^n}),$ is the  $n\times n$ matrix whose
{$(i,j)$} entry is given by $\la x_i,x_j\ra$, for all $i,j\in [n].$
Lastly, note that for all   $\{x_i\}_{i=1}^n\subseteq \mathcal{V}$
we have that $\gram( \{x_i\}_{i=1}^n)$ is psd and moreover, {$\rank
\left(\gram( \{x_i\}_{i=1}^n)\right)=\dim (
\Span(\{x_i\}_{i=1}^n))$}, {where $\dim (\mathcal{V})$ denotes  the dimension of
vector space $\mathcal{V}$}.

\subsection*{Quantum mechanics}

In this {section} we briefly introduce {some notions  from  quantum mechanics} that are of relevance  to this work. For a detailed introduction the interested  reader is  referred to \cite{NC00}.

According to the axioms of quantum mechanics, the \emph{state}  of a
{$d$-dimensional} quantum system is specified by a Hermitian psd
operator $\rho \in \calh^d_+$ (for some $d\ge~1$) such that
$\tr(\rho)=1$, {where $\tr(\rho)$ is the trace of $\rho$}. {In
order} to extract information from a quantum system   we need to
{\emph{measure}} it. Measurements on a quantum system are described
by the {Positive Operator Valued Measure} (POVM) formalism. A POVM
is  a family of psd  matrices ${\{M_i\}_{i=1}^m} \subseteq
\calh^d_+$ that sum to the identity operator, i.e., $\sum_{i=1}^m
M_i=I_d $. If the measurement ${\{M_i\}_{i=1}^m}$ is performed on a
quantum system {which is} in state $\rho$ then the outcome $i$ is
observed with probability $p_{i}:=\tr(\rho M_i)$. Note  that  from
the definitions above $\{p_i\}_{i=1}^m$  is  a valid probability
distribution.

 We also
 {use}
  a second (equivalent) mathematical  formalism  describing  a quantum  measurement. Given a $d$-dimensional quantum system, an  {\em observable} is any  Hermitian operator  $H$ acting on $\C^d$.  By the spectral theorem we know that $H=\sum_{i=1}^k\lambda_iP_i,$ where  $\{ \lambda_i\}_{i=1}^k$ ($k\le d)$ are the eigenvalues of $H$ and $\{P_i\}_{i=1}^k$ {are} the projectors {onto} the corresponding eigenspaces. The observable $H$ describes {the} POVM measurement {$\{P_i\}_{i=1}^k$} with outcomes $\{\lambda_i\}_{i=1}^k$, {i.e., upon} measuring state $\rho,$ the probability of  outcome $\lambda_i$ is given by  $\tr(\rho P_i)$.
  We say that $H$ is a {\em $\pm 1$  observable} if {it has $\pm 1$ eigenvalues.}

Consider two quantum systems ${\rm S_1}$ and ${\rm S_2}$ {and} say that ${\rm S_1}$ is in state $\rho_1 \in \calh^{d_1}_+$ and {${\rm S_2}$ is in state}
$\rho_2 \in \calh^{d_2}_+$.  In this case, the  state
of the joint system is given by the density matrix  $\rho_1\otimes \rho_2 \in {\calh^{d_1 d_2}_+}$. {If}
$\{M_i\}_{i=1}^{m_1}\subseteq \calh^{d_1}_+ $  and
$\{N_j\}_{j=1}^{m_2}\subseteq \calh^{d_2}_+ $ are POVMs  on the
individual systems ${\rm S_1}$ and ${\rm S_2},$ the {operators}
$\{M_i\otimes N_j :i\in [m_1], j\in [m_2]\}\subseteq
\calh_+^{d_1 d_2}$ define  a {\em {joint} measurement} on the
joint~system. {Note that not all states and measurements are of this form. {In particular, states that are not convex combinations of states of the form $\rho_1 \otimes \rho_2$ are said to be \emph{entangled}}.}

{We frequently consider \emph{rank $1$} quantum states which can be written as the outer product $\psi \psi^*$ for some vector $\psi$ (which must have unit norm since its outer product must have unit trace). Such quantum states are called \emph{pure} and there is one such pure quantum state we use frequently in this paper.} {We} denote by  $\Psi_d$ the  canonical {\em maximally entangled state} given by
\be\label{eq:maxentangled}
{1\over \sqrt{d}}\sum_{i=1}^de_i\otimes e_i\in \C^d\otimes \C^d.
\ee
{One can check that it is indeed entangled.}
We make repeated use of the fact that
\be\label{eq:maxent}
\Psi_d^*(A\otimes B)\Psi_d= \frac{1}{d} \, \tr\left(AB^\sfT\right), \text{ for all } A,B\in \C^{d\times d}.
\ee
The {\em Pauli matrices} are given by
\[
{I:=
\begin{pmatrix}
1 & 0 \\
0 & 1
\end{pmatrix}}, \;
X:=
\begin{pmatrix}
0 & 1 \\
1 & 0
\end{pmatrix}, \;
Y:=\begin{pmatrix}
0 & -i \\
i  & 0
\end{pmatrix},
\; \text{ and } \;
Z:=
\begin{pmatrix}
1 & 0 \\
0 & -1
\end{pmatrix}. \]
Note that the {(non-identity)} Pauli matrices are Hermitian, their trace is equal to zero,  they have $\pm 1$ eigenvalues and they pairwise anticommute. {Many of the explicit observables we consider in this paper are constructed using the Pauli matrices.}

\subsection*{Convexity}

A set $C \subseteq \R^{n}$ is {\em convex} if for all $a, b \in C$  and $\lambda \in [0,1]$ we have that  $\lambda a + (1-\lambda) b \in C$. A subset $F\subseteq C $ is called a {\em face} of $C$ if $\lambda c_1+(1-\lambda)c_2\in F$ implies that $c_1,c_2\in F$, for all $c_1,c_2\in C$ and $\lambda\in [0,1]$. We say that $c$ is an {\em extreme point} of the convex set $C$ if the set  $\{c\}$ is a face of $C$. We denote by ${\rm ext} (C)$ the set of extreme points of the convex set $C$.

\subsection*{Graph theory}

A graph $G$ is an ordered pair of sets $([n],E(G))$, where $E(G)$ is a collection of 2-element subsets of $[n]$. The elements of $[n]$ are called the {\em vertices} of the graph and the elements of $E(G)$  its {\em edges}. For every edge  $e=\{u,v\} \in E(G)$ we say that $u$ and $v$ are  {\em adjacent} and write $u\sim v$. A {\em subgraph} of $G$ is a graph whose vertex and edge sets are subsets of the vertex and {edge} sets of $G$, respectively. The {\em adjacency matrix} of $G$ is the $n\times n$  matrix
\[ {A:=\sum_{u\sim v}(e_ue_v^\sfT+e_ve_u^\sfT)}. \]
Note that the smallest eigenvalue of $A$ is negative.
The {\em support graph} of a  matrix $X\in\calS_n$, denoted by $S(X)$, is the  graph with vertex set $[n]$, {and} $u\sim v$ if and only if $X_{uv}\ne 0$ (and  $u \neq v$).  The {\em $n$-cycle}, denoted $C_n$,  is the graph with vertex set $[n]$ where $u\sim v$ if $(u-v)\equiv 1\mod n$.

\section{Studying the cpsd-rank}\label{sec:properties}

\subsection{Basic properties}\label{sec:basic}

{Our goal in this  section is to  determine basic    properties of the $\CPSDR$ that we use throughout this work.}

Note that  in the definition of $\CPSDR$ we only consider $\CPSD$-factorizations using  Hermitian (i.e., complex valued) psd matrices. If we  restrict to $\CPSD$-factorizations using  symmetric (i.e., real valued) psd matrices, we arrive at the notion of {\em real cpsd-rank}.  Nevertheless, the real cpsd-rank can differ at most by a  factor of two from the cpsd-rank.
To see this,  for  any $X\in \C^{d\times
d}$ set
 \be\label{eq:comtorealpsd}
T(X) := \dfrac{1}{\sqrt 2} \begin{pmatrix} \mathcal{R}(X) & -\mathcal{I}(X)\\ \mathcal{I}(X)&  \mathcal{R}(X)\end{pmatrix},
\ee
and note  that $T$ is a bijection   between $\C^{d\times d}$   and   $\R^{ 2d\times 2d}$. Furthermore,    $X\in \calh^n_+$  if and only if $T(X)\in \mathcal{S}^{2n}_+$  and moreover  $\inner{X}{Y} = \inner{T(X)}{T(Y)}$ for all $X,Y\in \calh^n_+$.

In our first result in this section we collect several simple properties concerning  the psd matrices  in a $\CPSD$-factorization.

\begin{lemma}\label{lem:fullrank}
Let $\{P_i\}_{i=1}^n\subseteq \calh^d_+$   be a $\CPSD$-factorization for $X\in \CPSD^n$.
\bi
\item[$(i)$] For any  $d\times d$ unitary  matrix  $U$, the matrices   $\{U^*P_iU\}_{i=1}^n\subseteq \calh^d_+$ are  a $\CPSD$-factorization of~$X$.
\item[$(ii)$] We have that $\CPSDR(X)\le \rank(\sum_{i=1}^nP_i)$.  In particular, if $\{P_i\}_{i=1}^n \subseteq \calh^d_+$ is a size-optimal $\CPSD$-factorization then  $\rank(\sum_{i=1}^nP_i)=d$.
\ei
\end{lemma}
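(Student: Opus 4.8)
Part $(i)$ is immediate: if $U$ is a $d\times d$ unitary, then $U^*P_iU \in \calh^d_+$ for every $i$ (conjugation by a unitary preserves the spectrum, hence positive semidefiniteness), and $\tr\bigl((U^*P_iU)(U^*P_jU)\bigr) = \tr(U^*P_iP_jU) = \tr(P_iP_j) = X_{ij}$ by cyclicity of the trace and $UU^*=I_d$. So $\{U^*P_iU\}_{i=1}^n$ is again a $\CPSD$-factorization of $X$. I would state this in one or two lines.

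For part $(ii)$, the plan is to restrict the factorization to the range of $S := \sum_{i=1}^n P_i$. Let $r := \rank(S)$ and let $V \in \C^{d\times r}$ have orthonormal columns spanning the column space of $S$, so that $VV^*$ is the orthogonal projector onto $\ran(S)$. The key observation is that since each $P_i$ is psd and $\sum_i P_i = S$, we have $\ran(P_i)\subseteq \ran(S)$ for every $i$; indeed $0 \preceq P_i \preceq S$, and for any psd matrices $0\preceq A\preceq B$ one has $\ran(A)\subseteq\ran(B)$ (if $B\psi=0$ then $\psi^*A\psi\le\psi^*B\psi=0$, forcing $A\psi=0$). Consequently $VV^*P_i VV^* = P_i$, equivalently $P_i = V Q_i V^*$ where $Q_i := V^*P_iV \in \calh^r_+$. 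Then $\tr(P_iP_j) = \tr(VQ_iV^*VQ_jV^*) = \tr(Q_iV^*VQ_jV^*V) = \tr(Q_iQ_j)$, using $V^*V = I_r$ and cyclicity. Hence $\{Q_i\}_{i=1}^n\subseteq\calh^r_+$ is a $\CPSD$-factorization of $X$ of size $r$, which gives $\CPSDR(X)\le r = \rank(\sum_i P_i)$.

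For the ``in particular'' clause: if $\{P_i\}_{i=1}^n\subseteq\calh^d_+$ is size-optimal, then $d = \CPSDR(X) \le \rank(\sum_i P_i) \le d$, where the last inequality is trivial since $\sum_i P_i$ is a $d\times d$ matrix; hence $\rank(\sum_i P_i)=d$. I do not anticipate a genuine obstacle here — the only slightly non-routine point is the containment $\ran(P_i)\subseteq\ran(\sum_i P_i)$, which is the standard fact about ranges of psd summands, so I would make sure to record its one-line justification explicitly.
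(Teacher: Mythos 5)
Your proposal is correct and takes essentially the same approach as the paper: the paper diagonalizes $P=\sum_i P_i$ by a unitary and restricts to its support, which is exactly your compression $Q_i = V^*P_iV$ onto $\mathrm{Range}(P)$. You are slightly more explicit about why the restriction loses no information (the containment $\mathrm{Range}(P_i)\subseteq\mathrm{Range}(P)$ for psd summands), which the paper leaves implicit, but the underlying argument is the same.
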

\medskip 

\begin{proof} Part $(i)$ is clear. For $(ii)$
define the psd matrix $P:=\sum_{i=1}^n P_i$ and set  $r:=\rank(P)$.  Clearly,  $P$ is unitarily equivalent  to a diagonal matrix with  exactly  $r$  positive entries.
By restricting to the support of $P$, we get a   $\CPSD$-factorization of $X$ using $r\times r$ psd matrices.
\end{proof}

{Recall that any family of pairwise-commuting Hermitian   matrices  is  simultaneously diagonalizable by a unitary matrix (e.g. see \cite[Theorem 2.5.5]{HJ}).  Consider   $X\in \CPSD^n$  and let $\calI\subseteq [n]$ so that the principal submatrix corresponding to  $\calI$ is  diagonal (with positive diagonal entries). 
In view of Lemma~\ref{lem:fullrank}  $(i)$, 
 we may assume that in any $\CPSD$-factorization $\{P_i\}_{i=1}^n$ of $X$, the matrices $\{P_i\}_{i\in \calI}$  can be taken to be diagonal~psd.}
 {This immediately implies $\CPSDR(I_n) \geq n$, for any $n$, which can be easily seen to hold with equality.}
We proceed with a second example.} 
 
\medskip\begin{example}
{We prove that the cpsd-rank of the
matrix $\begin{pmatrix}2 & 0 & 0 & 1 & 1\\
0 & 2 & 0 & 1 & 1\\
0 & 0 & 2 & 1 & 1\\
1 & 1 & 1 & 3 & 0\\
1 & 1 & 1 & 0 & 3\end{pmatrix}$} 
is equal to $4$ and {a} size-optimal $\CPSD$-factorization is given by 
$$
\left(\begin{smallmatrix}\sqrt{2} & 0 & 0 & 0\\
0 & 0 & 0 & 0\\
0 & 0 & 0 & 0\\
0 & 0 & 0 & 0\end{smallmatrix}\right),\left(\begin{smallmatrix}0 & 0 & 0 & 0\\
0 & 1 & 0 & 0\\
0 & 0 & 1 & 0\\
0 & 0 & 0 & 0\end{smallmatrix}\right),\left(\begin{smallmatrix}0 & 0 & 0 & 0\\
0 & 0 & 0 & 0\\
0 & 0 & 0 & 0\\
0 & 0 & 0 & \sqrt{2}\end{smallmatrix}\right),\left(\begin{smallmatrix}{1\over \sqrt{2}} & 0 & 0 & {1\over \sqrt{2}}\\
0 & 1 & 0 & 0\\
0 & 0 & 0 & 0\\
{1\over \sqrt{2}} & 0 & 0 & {1\over \sqrt{2}}\end{smallmatrix}\right), \left(\begin{smallmatrix}{1\over \sqrt{2}} & 0 & 0 & -{1\over \sqrt{2}}\\
0 & 0 & 0 & 0\\
0 & 0 & 1 & 0\\
-{1\over \sqrt{2}} & 0 & 0 & {1\over \sqrt{2}}\end{smallmatrix}\right).
$$
{Assume that $\CPSDR(X)\leq3$ and let  $\{P_i\}_{i=1}^5\subseteq \calh^3_+$ be a $\CPSD$-factorization. Since $P_1, P_2$ and $ P_3$  commute pairwise, by  applying an appropriate change of basis,  we may assume by  Lemma \ref{lem:fullrank} $(i)$ that  they are diagonal. Furthermore, as $\la P_i,P_j\ra=0,$ for $i\ne j\in [3]$ it follows that $P_i=\sqrt{2}e_ie_i^\sfT$, for  $1\le i\le 3$. Since $P_4$, $P_5$ are $3\times 3$ orthogonal psd matrices (and nonzero) one of them has rank $1$. Suppose without loss of generality $P_4 = xx^*$ for some $x=(x_i) \in \C^3$. Note  that $|x_i|^2 = 1/\sqrt{2},$ for all $1\le i\le 3$. Thus, $\langle P_4, P_4 \rangle = (\sum_{i=1}^3 |x_i|^2)^2 = 9/2 \neq 3$, a contradiction.} 
\end{example}
\medskip

In the remaining part of this section we focus on matrix operations  that preserve the property of being cpsd and we investigate in what way they affect the $\CPSDR$.
\vspace{0.1cm}

\begin{lemma}\label{lem:whatever}
Consider  $X \in \CPSD^n$. We have that:
 \bi
 \item[$(i)$] For any $n\times n$ diagonal matrix $D$ with strictly positive diagonal entries     we~have
 $$DXD\in \CPSD^n, \text{ and } \CPSDR(X)=\CPSDR(DXD).$$
 \item[$(ii)$] For any $n\times n$  permutation matrix $P$ we have
 $$PXP^\sfT\in \CPSD^n, \text{ and } \CPSDR(X)=\CPSDR(PXP^\sfT).$$
 \ei
\end{lemma}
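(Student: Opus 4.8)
The plan is to verify both claims directly from a given $\CPSD$-factorization $\{P_i\}_{i=1}^n \subseteq \calh^d_+$ of $X$, since both operations amount to a trivial rescaling or relabeling of the factorization that preserves the size $d$ of the matrices involved.

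For part $(i)$, write $D = \operatorname{diag}(d_1,\dots,d_n)$ with each $d_i > 0$. Given a $\CPSD$-factorization $X_{ij} = \tr(P_iP_j)$, I would set $Q_i := d_i P_i$. Since $d_i > 0$, each $Q_i$ is still Hermitian psd of size $d$, and $\tr(Q_iQ_j) = d_id_j \tr(P_iP_j) = d_i X_{ij} d_j = (DXD)_{ij}$. Hence $\{Q_i\}_{i=1}^n$ is a $\CPSD$-factorization of $DXD$ of the same size, which shows $DXD \in \CPSD^n$ and $\CPSDR(DXD) \le \CPSDR(X)$. For the reverse inequality, observe that $D$ is invertible with $D^{-1}$ again diagonal with strictly positive entries, and $X = D^{-1}(DXD)D^{-1}$; applying the forward direction to $DXD$ and $D^{-1}$ gives $\CPSDR(X) \le \CPSDR(DXD)$. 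Equality follows.

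For part $(ii)$, let $P$ be an $n\times n$ permutation matrix corresponding to a permutation $\pi \in S_n$, so that $(PXP^\sfT)_{ij} = X_{\pi^{-1}(i),\pi^{-1}(j)}$ (with the exact index convention checked once). Given a $\CPSD$-factorization $\{P_i\}_{i=1}^n$ of $X$, I would set $Q_i := P_{\pi^{-1}(i)}$ — simply relabeling the factor matrices according to $\pi$. These are still $d\times d$ Hermitian psd matrices, and $\tr(Q_iQ_j) = \tr(P_{\pi^{-1}(i)}P_{\pi^{-1}(j)}) = X_{\pi^{-1}(i),\pi^{-1}(j)} = (PXP^\sfT)_{ij}$, so $\{Q_i\}_{i=1}^n$ is a $\CPSD$-factorization of $PXP^\sfT$ of size $d$. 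This proves $PXP^\sfT \in \CPSD^n$ and $\CPSDR(PXP^\sfT) \le \CPSDR(X)$; since $P^\sfT = P^{-1}$ is also a permutation matrix and $X = P^\sfT(PXP^\sfT)P$, the same argument applied in reverse gives the opposite inequality, hence equality.

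There is no real obstacle here — both parts are bookkeeping exercises, the only mild care needed is getting the index conventions for the permutation action right and noting that invertibility of $D$ (resp. $P$) lets one run the argument symmetrically to upgrade the inequalities to equalities. I would present the proof compactly, doing part $(i)$ in full and remarking that part $(ii)$ is entirely analogous with $d_iP_i$ replaced by the permuted list $P_{\pi^{-1}(i)}$.
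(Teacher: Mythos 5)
Your proof is correct, and the paper itself gives no proof of this lemma (it is stated and immediately followed by the discussion of matrix sums, the authors evidently regarding it as routine). Your argument is exactly the expected bookkeeping one: rescale the factors by $d_i$ for $(i)$, relabel them by $\pi$ for $(ii)$, and use invertibility of $D$ (respectively $P$) to run the argument symmetrically and upgrade the size inequalities to equalities.
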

\medskip

We now determine  how the $\CPSDR$ behaves under matrix sums.

\medskip
\begin{lemma}\label{cdevgerger}
For any $X,Y\in \CPSD^n$ we have that $X+Y\in \CPSD^n$ and furthermore,
\[ \CPSDR(X+Y) \leq \CPSDR(X) + \CPSDR(Y). \]
\end{lemma}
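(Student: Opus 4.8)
The plan is to combine $\CPSD$-factorizations of $X$ and $Y$ via a direct sum. Suppose $\{P_i\}_{i=1}^n \subseteq \calh^{d_X}_+$ is a size-optimal $\CPSD$-factorization of $X$, so that $d_X = \CPSDR(X)$ and $X_{ij} = \tr(P_iP_j)$ for all $i,j \in [n]$; similarly let $\{Q_i\}_{i=1}^n \subseteq \calh^{d_Y}_+$ be a size-optimal $\CPSD$-factorization of $Y$ with $d_Y = \CPSDR(Y)$. First I would set $R_i := P_i \oplus Q_i \in \calh^{d_X + d_Y}_+$ for each $i \in [n]$; this is psd since the direct sum of two psd matrices is psd.

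Next I would verify that $\{R_i\}_{i=1}^n$ is a $\CPSD$-factorization of $X+Y$. This is the routine computation
\[
\tr(R_iR_j) = \tr\big((P_i\oplus Q_i)(P_j\oplus Q_j)\big) = \tr\big((P_iP_j)\oplus(Q_iQ_j)\big) = \tr(P_iP_j) + \tr(Q_iQ_j) = X_{ij} + Y_{ij},
\]
so $X+Y = (\tr(R_iR_j))_{i,j} \in \CPSD^n$, and moreover the matrices $R_i$ have size $d_X + d_Y = \CPSDR(X) + \CPSDR(Y)$. Since $\CPSDR(X+Y)$ is by definition the least size over all $\CPSD$-factorizations of $X+Y$, this exhibited factorization gives the claimed bound $\CPSDR(X+Y) \le \CPSDR(X) + \CPSDR(Y)$.

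There is essentially no obstacle here — the only point requiring any care is the block-multiplicativity of the direct sum under matrix products, i.e. that $(P\oplus Q)(P'\oplus Q') = (PP')\oplus(QQ')$ and hence $\tr$ adds over the blocks, which is immediate from the block structure. (The fact that $X+Y \in \CPSD^n$ is already implicit in the discussion following the definition of the $\CPSD$ cone, but the factorization above reproves it.) One could remark that the inequality can be strict in general, since e.g. $\CPSDR(I_n + I_n) = \CPSDR(2I_n) = n$ by Lemma~\ref{lem:whatever}$(i)$ together with $\CPSDR(I_n) = n$, whereas the bound would give $2n$; but this is optional and not needed for the statement.
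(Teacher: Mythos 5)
Your proof is correct and takes essentially the same approach as the paper: form the direct sums $P_i \oplus Q_i$ from size-optimal factorizations of $X$ and $Y$, observe they are psd of size $\CPSDR(X)+\CPSDR(Y)$, and check the trace identity. The paper states this more tersely but the argument is identical.
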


\begin{proof}Let $\{P_i\}_{i=1}^n\subseteq \calh^{d_1}_+$ and $\{Q_j\}_{j=1}^m\subseteq \calh^{d_2}_+$ be size-optimal $\CPSD$-factorizations for $X$ and $Y$, respectively.  For all $i\in [n]$ define $Z_i:=P_i\oplus Q_i\in \calh^{d_1+d_2}_+$ and note that the matrices  $\{Z_i\}_{i=1}^n$ are a $\CPSD$-factorization for $X+Y$.
\end{proof}
\medskip 

\begin{remark}As it turns out, the cpsd-rank of the sum of a family {of} cpsd matrices can be exponentially smaller compared to {any} of the individual cpsd-ranks. To see this, let
{$X\in \CPSD^n$} and define  $X_{sym}:=\sum_{P\in P_n}PXP^\sfT,$  where  $P_n$ is the set of $n\times n$ permutation matrices. By Lemma~\ref{lem:whatever} we have   $X_{sym}\in \CPSD^n$ and  by its definition {we have} $X_{sym}=(a-b)I+bJ,$  for appropriate constants  $a,b$ where $a\ge b\ge 0$. By Lemma~\ref{cdevgerger} we have  $\CPSDR(X_{sym})\le n+1$, {since $\CPSDR(I_n) = n$ and $\CPSDR(J) = 1$}. On the other hand, in Section \ref{sec:explowerbounds} we show that  for any $n\ge1,$ there exists a matrix in $\CPSD^{2n}$ with  cpsd-rank  $2^{\Omega(\sqrt{n})}$.
\end{remark}
\medskip 

In our next result  we determine how the  $\CPSDR$ behaves under direct sums.

\medskip 

\begin{lemma}For any $X\in \CPSD^n$ and $Y\in  \CPSD^m$ we have that $X\oplus Y\in \CPSD^{n+m}$ and furthermore,  $\CPSDR(X\oplus Y)=\CPSDR(X)+\CPSDR(Y)$.
\end{lemma}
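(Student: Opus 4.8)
The plan is to prove the membership $X\oplus Y\in\CPSD^{n+m}$ together with the inequality $\CPSDR(X\oplus Y)\le\CPSDR(X)+\CPSDR(Y)$ by an explicit padding construction, and then to prove the reverse inequality $\CPSDR(X\oplus Y)\ge\CPSDR(X)+\CPSDR(Y)$ by a dimension-counting argument on the supports of the two blocks of an optimal factorization; the latter is where the real work lies.

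For the upper bound, I would take size-optimal $\CPSD$-factorizations $\{P_i\}_{i=1}^n\subseteq\calh_+^{d_1}$ of $X$ and $\{Q_j\}_{j=1}^m\subseteq\calh_+^{d_2}$ of $Y$, so $d_1=\CPSDR(X)$ and $d_2=\CPSDR(Y)$, and set $R_i:=P_i\oplus 0_{d_2}$ for $i\in[n]$ and $R_{n+j}:=0_{d_1}\oplus Q_j$ for $j\in[m]$, all lying in $\calh_+^{d_1+d_2}$. Computing $\tr(R_kR_\ell)$ and using that the product of a matrix supported on the first $d_1$ coordinates with one supported on the last $d_2$ coordinates vanishes, one checks that $\{R_k\}_{k=1}^{n+m}$ is a $\CPSD$-factorization of $X\oplus Y$. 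This simultaneously gives $X\oplus Y\in\CPSD^{n+m}$ and $\CPSDR(X\oplus Y)\le d_1+d_2$.

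For the lower bound, set $d:=\CPSDR(X\oplus Y)$ and let $\{R_k\}_{k=1}^{n+m}\subseteq\calh_+^d$ be a size-optimal $\CPSD$-factorization. Define $\mathcal{U}:=\sum_{i=1}^n\ran(R_i)$ and $\mathcal{W}:=\sum_{j=1}^m\ran(R_{n+j})$. For $i\le n<j$ we have $(X\oplus Y)_{i,j}=0$, hence $\tr(R_iR_j)=0$, and since $R_i,R_j$ are psd this forces $R_iR_j=0$, so $\ran(R_j)\perp\ran(R_i)$; summing over such $i,j$ gives $\mathcal{U}\perp\mathcal{W}$. Next, since the range of a sum of psd matrices equals the sum of their ranges, $\ran(\sum_kR_k)=\mathcal{U}+\mathcal{W}$, and by Lemma~\ref{lem:fullrank}~$(ii)$ a size-optimal factorization satisfies $\rank(\sum_kR_k)=d$, so $\mathcal{U}+\mathcal{W}=\C^d$. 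Combining, $\C^d=\mathcal{U}\oplus\mathcal{W}$ orthogonally, hence $\dim\mathcal{U}+\dim\mathcal{W}=d$. Finally, each $R_i$ with $i\le n$ is Hermitian with $\ran(R_i)\subseteq\mathcal{U}$, hence vanishes on $\mathcal{U}^\perp$; choosing an orthonormal basis of $\C^d$ adapted to $\mathcal{U}\oplus\mathcal{W}$ and applying Lemma~\ref{lem:fullrank}~$(i)$, the matrices $R_i$ ($i\le n$) become block-diagonal, and their restrictions to $\mathcal{U}$ form a $\CPSD$-factorization of $X$ by $(\dim\mathcal{U})\times(\dim\mathcal{U})$ psd matrices, so $\dim\mathcal{U}\ge\CPSDR(X)$; symmetrically $\dim\mathcal{W}\ge\CPSDR(Y)$. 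Therefore $d=\dim\mathcal{U}+\dim\mathcal{W}\ge\CPSDR(X)+\CPSDR(Y)$, completing the proof.

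The main obstacle is the lower bound, and within it the crucial point is the use of Lemma~\ref{lem:fullrank}~$(ii)$: without the full-rank reduction one only gets $\mathcal{U}+\mathcal{W}\subseteq\C^d$, the two block-supports could overlap, and the dimension count would break down. The two linear-algebra facts invoked along the way — that $\tr(AB)=0$ implies $AB=0$ for psd $A,B$, and that $\ran(A+B)=\ran(A)+\ran(B)$ for psd matrices — are standard and follow immediately from $\ker(A+B)=\ker A\cap\ker B$ for psd matrices.
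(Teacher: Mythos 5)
Your proof is correct and follows essentially the same route as the paper's: the upper bound by padding with zero blocks, and the lower bound by observing that the zero off-diagonal block forces orthogonality of the two block supports, then invoking Lemma~\ref{lem:fullrank}~(ii) twice (for the full-rank reduction and to bound each block dimension below by the respective cpsd-rank). The paper phrases the lower bound a bit more compactly — working directly with $\rank(P)+\rank(Q)=\rank(P+Q)=d$ where $P=\sum_i P_i$, $Q=\sum_j Q_j$ — but the underlying facts and structure are identical to yours.
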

\medskip

\begin{proof}Let $\{P_i\}_{i=1}^n\subseteq \calh^{d_1}_+$ and $\{Q_j\}_{j=1}^m\subseteq \calh^{d_2}_+$ be size-optimal $\CPSD$-factorizations for $X$ and $Y$, respectively. For $i\in [n]$, set $\tilde{P}_i:= {P_i\oplus 0_{d_2}} \in \calh^{d_1+d_2}_+$  and for $j\in [m]$ set $\tilde{Q}_j:= {0_{d_1}\oplus Q_j} \in \calh^{d_1+d_2}_+$. Clearly the matrices $\{\tilde{P}_i\}_{i=1}^n\cup \{\tilde{Q}
_j\}_{j=1}^m$ form a $\CPSD$-factorization for $X\oplus Y$. Thus we get  that  $X\oplus Y\in \CPSD^{n+m}$ and  furthermore, $\CPSDR(X\oplus Y)\le \CPSDR(X)+\CPSDR(Y)$.

For the reverse  inequality  set $d:=\CPSDR(X\oplus Y)$ and let   $\{P_i\}_{i=1}^{n}\cup\{Q_j\}_{j=1}^m\subseteq \calh^d_+$ be a size-optimal   $\CPSD$-factorization for $X\oplus Y$. Moreover, set $P:=
\sum_{i=1}^nP_i$ and $Q:=\sum_{j=1}^mQ_j$.  By Lemma~\ref{lem:fullrank} we have     $ \rank  (P+Q)=d.$
Furthermore, by the structure of $X\oplus Y$  we have  $\la P_i,Q_j\ra =0,$ for all $i,j$ and thus $\la P,Q\ra=0$. As  $P,Q$ are psd this implies that  $d=\rank(P+Q)=\rank(P)+\rank(Q).$
Since the matrices $\{P_i\}_{i=1}^n$ form a $\CPSD$-factorization of $X$, by Lemma \ref{lem:fullrank} $(ii)$  we have  $\rank(P) \ge \CPSDR(X)$  and similarly
 that $\rank(Q)\ge \CPSDR(Y).$ Putting everything together, the claim~follows.
\end{proof}
\medskip

Our next goal is to show that there exist  $\CPSD$ matrices  that do
not admit $\CPSD$-factorizations using only rank-one factors. In contrast to this,
  restricting to  factorizations using  rank-one  psd matrices has been a  useful approach to provide upper bounds on the
{positive semidefinite rank (cf. Section \ref{psdvscpsd})}
\cite{WL,FGPRT}.

We denote by $\CPSD^{n,1}$ {the set of  matrices} in
$\CPSD^n$ that admit $\CPSD$-factorizations using rank-one factors.
Furthermore, we call a {\em Hadamard square root} of $X\in
\R^{n\times m}_{+}$ any matrix obtained by replacing each entry of
$X$ by one of its two square roots. We have the following result
whose proof is straightforward and is omitted.
\medskip

\begin{lemma}\label{lem:squareroot}
For any  matrix  $X\in \calh^n_+$
we have that
$X\circ X^*\in
\CPSD^{n,1}$   and   moreover  $\CPSDR({X\circ X})\le \rank(X)$.  In particular, if $X\in \calh^n_+$ is a matrix with  0/1 entries   then~$X\in \CPSD^n$ and $\CPSDR(X)\le \rank(X)$. Conversely, if
$X\in \CPSD^{n,1}$ then $X$ has a psd Hadamard square root.
\end{lemma}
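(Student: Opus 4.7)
The plan is to treat the three statements in order, all pivoting on the identity $\tr\!\bigl((vv^*)(ww^*)\bigr)=|v^*w|^2$, which relates rank-one psd factors to entrywise (Hadamard) squaring.

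For the first claim I would use a Cholesky-type factorization $X=VV^*$ with $V\in\C^{n\times d}$ and $d=\rank(X)$; let $v_i\in\C^d$ denote the vector whose conjugate transpose is the $i$-th row of $V$, so that $X_{ij}=v_i^*v_j$. Setting $P_i:=v_iv_i^*\in\calh^d_+$, a direct computation gives $\tr(P_iP_j)=(v_j^*v_i)(v_i^*v_j)=|X_{ij}|^2$, i.e.\ the $(i,j)$-entry of $X\circ X^*$ under the entrywise interpretation. Hence $\{P_i\}_{i=1}^n$ is a rank-one $\CPSD$-factorization of $X\circ X^*$ of size $d$, showing $X\circ X^*\in\CPSD^{n,1}$ and $\CPSDR(X\circ X^*)\le\rank(X)$. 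The $0/1$ case is then immediate: when each $X_{ij}\in\{0,1\}$ one has $X_{ij}^2=X_{ij}$, so $X\circ X=X$ and the bound specialises to $\CPSDR(X)\le\rank(X)$.

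For the converse I would reverse this computation. Given $X\in\CPSD^{n,1}$, write the rank-one factors as $P_i=w_iw_i^*$ with $w_i\in\C^d$ and form the Gram matrix $G:=(w_i^*w_j)_{i,j\in[n]}$, which is psd. Then $|G_{ij}|^2=\tr(P_iP_j)=X_{ij}$ entrywise, so $|G_{ij}|=\sqrt{X_{ij}}$. In the real setting---which is the natural one for the Hadamard square root, whose entries are $\pm\sqrt{X_{ij}}$---the $w_i$ may be taken real, and then $G_{ij}=\pm\sqrt{X_{ij}}$, so $G$ itself is a psd Hadamard square root of $X$.

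The one delicate point, and the main obstacle, is the converse when the $w_i$ are genuinely complex: reconciling the complex Gram matrix $G$ with the real-valued Hadamard square root definition requires rotating each $w_i$ by a unit phase so that every inner product becomes real, which is possible exactly when the cocycle condition $\tr(P_iP_jP_k)\in\R$ holds for all triples $(i,j,k)$. Since the paper explicitly declares the proof straightforward and omits it, this subtlety can be sidestepped either by restricting to real rank-one factorizations (the natural setting of the definition, stated for $X\in\R^{n\times m}_+$) or by reading $G$ with $|G_{ij}|=\sqrt{X_{ij}}$ as a psd Hadamard square root in the appropriate complex sense.
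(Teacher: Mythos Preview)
The paper explicitly omits the proof, calling it ``straightforward,'' so there is no argument on the paper's side to compare against. Your approach---factoring $X=VV^*$ with $V\in\C^{n\times d}$, $d=\rank(X)$, and taking $P_i=v_iv_i^*$ so that $\tr(P_iP_j)=|X_{ij}|^2$---is precisely the natural one, and your treatment of the $0/1$ special case and the converse is correct. Your remark on the real-versus-complex issue in the converse (that a complex Gram matrix need only satisfy $|G_{ij}|=\sqrt{X_{ij}}$, and that rotating by unit phases to force reality requires a cocycle condition) is a genuine subtlety the paper glosses over; since the Hadamard square root is defined only for $X\in\R^{n\times m}_+$ and the immediate application is to real $0/1$ matrices, restricting to real factorizations is the clean resolution.
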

\vspace{0.1cm}

As a concrete example of a matrix {in} $\CPSD\setminus \CPSD^1$, {consider}  \be X=\begin{pmatrix}
1 & \sqrt{2}/2 & \sqrt{2}/2\\
\sqrt{2}/2 & 1 & 1/10\\
\sqrt{2}/2 & 1/10 & 1
\end{pmatrix}.
\ee
Clearly  $X\in \CPSD^3 {= \DNN^3}$, but  no Hadamard square root of $X$ is psd.

\subsection{Lower bounds}\label{sec:lowerbounds}

In this section we derive two general lower bounds on the $\CPSDR$.
The first one is analytic  and the second one is based on the
support of the matrix. We show that in both cases, our  bounds never
exceed the size of the~matrix.

\subsubsection{Analytic lower bound} We start with  the following result. 

\medskip

\begin{theorem}\label{thm:analyticlowerbound}
For any matrix $X\in \CS^n$ we have that
\be\label{eq:analyticlowerbound}
\CPSDR(X)\ge { \left({\sum_{i =1}^n} \sqrt{X_{ii}}\right)^2\over {\sum_{i,j = 1}^n} X_{ij}}.
\ee
\end{theorem}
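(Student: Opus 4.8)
The plan is to exhibit a size-optimal $\CPSD$-factorization $\{P_i\}_{i=1}^n \subseteq \calh^d_+$ of $X$ (so that $d = \CPSDR(X)$) and then estimate $d$ from below using the Cauchy--Schwarz inequality for the Hilbert--Schmidt inner product together with properties of the positive semidefinite cone. The key observation is that for each $i$ we have $X_{ii} = \tr(P_i^2) = \|P_i\|_{\mathrm{HS}}^2$, so $\sqrt{X_{ii}} = \|P_i\|_{\mathrm{HS}}$, and that the denominator $\sum_{i,j} X_{ij} = \sum_{i,j}\tr(P_iP_j) = \tr\!\big((\sum_i P_i)^2\big) = \|P\|_{\mathrm{HS}}^2$, where $P := \sum_{i=1}^n P_i \in \calh^d_+$. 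Thus the claimed inequality \eqref{eq:analyticlowerbound} is equivalent to showing
\[
d \ \ge\ \frac{\big(\sum_{i=1}^n \|P_i\|_{\mathrm{HS}}\big)^2}{\|P\|_{\mathrm{HS}}^2}.
\]

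First I would bound the numerator: by the triangle inequality for the Hilbert--Schmidt norm, $\sum_{i=1}^n \|P_i\|_{\mathrm{HS}} \ge \|\sum_{i=1}^n P_i\|_{\mathrm{HS}} = \|P\|_{\mathrm{HS}}$ is the wrong direction, so instead I use that each $P_i$ is psd and hence $\|P_i\|_{\mathrm{HS}} = \sqrt{\tr(P_i^2)} \le \tr(P_i)$ — no, that is also not quite what is needed. The correct route: since $P_i \succeq 0$, we have $\tr(P_i) \le \sqrt{d}\,\|P_i\|_{\mathrm{HS}}$ is again backwards. Let me instead argue directly. By Cauchy--Schwarz applied to the inner product $\langle P_i, P \rangle$, and using $0 \preceq P_i \preceq P$ (which holds since all $P_j \succeq 0$), we get $\langle P_i, P\rangle = \tr(P_iP) \le \|P_i\|_{\mathrm{HS}}\|P\|_{\mathrm{HS}}$, but also $\langle P_i, P \rangle \ge \langle P_i, P_i\rangle = \|P_i\|_{\mathrm{HS}}^2$ wait — the cleanest argument is: $\|P_i\|_{\mathrm{HS}}\,\|P\|_{\mathrm{HS}} \ge \langle P_i, P\rangle = \tr(P_iP) \ge \lambda_{\min}$-type bounds fail. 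The genuinely clean step is the following: because $P \succeq P_i \succeq 0$, one has $\tr(P_i) \le \tr(P)$ trivially, and more usefully $\langle P_i, P \rangle \ge \|P_i\|_{\mathrm{HS}}^2$ since $P - P_i \succeq 0$ and $P_i \succeq 0$ imply $\tr(P_i(P-P_i)) \ge 0$. Combining, $\|P_i\|_{\mathrm{HS}}^2 \le \langle P_i, P\rangle \le \|P_i\|_{\mathrm{HS}}\|P\|_{\mathrm{HS}}$, giving $\|P_i\|_{\mathrm{HS}} \le \|P\|_{\mathrm{HS}}$; still not enough. The summation step I actually want: $\sum_i \langle P_i, P\rangle = \langle P, P\rangle = \|P\|_{\mathrm{HS}}^2$, so by Cauchy--Schwarz $\sum_i \|P_i\|_{\mathrm{HS}}^2 \le \|P\|_{\mathrm{HS}}^2$; then $\big(\sum_i\|P_i\|_{\mathrm{HS}}\big)^2 \le n\sum_i\|P_i\|_{\mathrm{HS}}^2 \le n\|P\|_{\mathrm{HS}}^2$, which bounds by $n$, not $d$. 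To get $d$ rather than $n$: restrict attention to a size-optimal factorization, use Lemma~\ref{lem:fullrank}$(ii)$ so that $\rank(P) = d$, and apply the sharper trace bound $\tr(P_i)^2 \le d\,\tr(P_i^2)$? No. The correct final inequality uses that $P_i \succeq 0$ forces $\|P_i\|_{\mathrm{HS}}^2 = \tr(P_i^2) \le \tr(P_i)\,\|P_i\|_{\mathrm{op}}$ and $\sum_i \tr(P_i) = \tr(P)$; pairing this with $\tr(P) \le \sqrt{\rank(P)}\,\|P\|_{\mathrm{HS}} = \sqrt{d}\,\|P\|_{\mathrm{HS}}$ and Cauchy--Schwarz over the index $i$ yields the bound with $d$ in place of $n$.

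So the key steps, in order, are: (1) pass to a size-optimal factorization so $d=\CPSDR(X)$ and, by Lemma~\ref{lem:fullrank}$(ii)$, $\rank(P) = d$ where $P = \sum_i P_i$; (2) rewrite numerator and denominator of \eqref{eq:analyticlowerbound} as $\big(\sum_i \|P_i\|_{\mathrm{HS}}\big)^2$ and $\|P\|_{\mathrm{HS}}^2$ respectively; (3) for each $i$ use positive semidefiniteness to compare $\|P_i\|_{\mathrm{HS}}$ with $\tr(P_i)$ via $\|P_i\|_{\mathrm{HS}} \le \sqrt{\tr(P_i)}\cdot\sqrt{\|P_i\|_{\mathrm{op}}} \le \sqrt{\tr(P_i)}\cdot\sqrt{\|P\|_{\mathrm{op}}}$, or more simply exploit $0 \preceq P_i \preceq P$ to get $\tr(P_i^2) \le \tr(P_iP)$ and then sum; (4) apply Cauchy--Schwarz to $\sum_i \sqrt{X_{ii}} = \sum_i \|P_i\|_{\mathrm{HS}}$ and combine with the rank-$d$ bound $\tr(P) \le \sqrt d\,\|P\|_{\mathrm{HS}}$. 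The main obstacle I anticipate is getting the constant exactly right — specifically arranging the chain of inequalities so the final denominator is $\sum_{i,j}X_{ij} = \|P\|_{\mathrm{HS}}^2$ and not, say, $\tr(P)$ or $n\|P\|_{\mathrm{HS}}^2$; this is where the psd ordering $P_i \preceq P$ and the identity $\sum_i\langle P_i,P\rangle = \|P\|_{\mathrm{HS}}^2$ must be used in just the right combination, so that the bound sharpens from the trivial $n$ to $\CPSDR(X)$. I also expect one needs to handle the degenerate case where some $X_{ii} = 0$ (forcing $P_i = 0$) separately or note it poses no difficulty. Finally, I would remark that the bound never exceeds $n$ — taking $X = I_n$ shows it can equal $n$ — consistent with the claim in the surrounding text that this lower bound is never larger than the matrix size.
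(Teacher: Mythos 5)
Your overall framework is the same as the paper's: take a size-optimal factorization $\{P_i\}_{i=1}^n \subseteq \calh^d_+$, set $P := \sum_i P_i$, observe that the denominator is $\sum_{i,j}X_{ij} = \tr(P^2)$ and that $X_{ii} = \tr(P_i^2)$, and use the Cauchy--Schwarz bound $\tr(P)^2 \le d\,\tr(P^2)$ (equivalently $\tr(P) \le \sqrt{d}\,\|P\|_{\mathrm{HS}}$, which is your step (4)). Where your proposal goes astray is precisely the step you wrote down early on and then dismissed: the inequality $\|P_i\|_{\mathrm{HS}} = \sqrt{\tr(P_i^2)} \le \tr(P_i)$, valid for every psd $P_i$ because $\sum_j\lambda_j^2 \le \big(\sum_j\lambda_j\big)^2$ when all $\lambda_j \ge 0$. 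You say ``no, that is also not quite what is needed,'' but it is \emph{exactly} what is needed. With it the chain closes immediately:
\[
\sum_{i=1}^n \sqrt{X_{ii}} \;=\; \sum_{i=1}^n\sqrt{\tr(P_i^2)} \;\le\; \sum_{i=1}^n\tr(P_i) \;=\; \tr(P) \;\le\; \sqrt{d}\,\sqrt{\tr(P^2)} \;=\; \sqrt{d}\,\sqrt{\textstyle\sum_{i,j}X_{ij}},
\]
and squaring and rearranging gives \eqref{eq:analyticlowerbound}.

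Your final step (3) instead proposes two alternative comparisons --- $\|P_i\|_{\mathrm{HS}} \le \sqrt{\tr(P_i)}\,\sqrt{\|P\|_{\mathrm{op}}}$ and $\tr(P_i^2) \le \tr(P_iP)$ --- and neither of these delivers the result. The first introduces $\|P\|_{\mathrm{op}}$, which you would then have to bound and which does not cancel cleanly against $\tr(P^2)$; the second, after summing, gives only $\sum_i X_{ii} \le \sum_{i,j}X_{ij}$, which is trivially true and irrelevant. The degenerate case $X_{ii}=0$ you flag needs no special treatment: then $P_i = 0$ and the inequality $\sqrt{\tr(P_i^2)} \le \tr(P_i)$ is an equality of zeros. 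So the gap is not conceptual but one of assembly: you had the correct key lemma in hand, discarded it, and replaced it with weaker or irrelevant substitutes. Also, note that the rank-$d$ fact from Lemma~\ref{lem:fullrank}$(ii)$ is not actually load-bearing for the final estimate --- the Cauchy--Schwarz bound $\tr(P)^2 \le d\,\tr(P^2)$ holds for any $P \in \calh^d_+$, rank-deficient or not --- so you can drop the emphasis on $\rank(P)=d$.
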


\begin{proof}{Set  $d:= \CPSDR(X)$ and let   $\{P_i\}_{i=1}^n \subseteq \calh^d_+$ be a size-optimal  $\CS$-factorization}. By Lemma \ref{lem:fullrank}  $(ii)$  we have that  $P:=\sum_{i=1}^nP_i\in \calh^d_+$ has full-rank. By the  Cauchy-Schwartz inequality  we have  that  $d\ge \tr(P)^2/\tr(P^2)$.
Note  that $\tr(P^2)
=\sum_{i,j=1}^nX_{ij}$.~Lastly,
\[
\tr(P)^2=\left(\sum_{i=1}^n\tr(P_i)\right)^2\ge\left(\sum_{i=1}^n\sqrt{\tr(P_i^2)}\right)^2 {= \left( \sum_{i=1}^n \sqrt{X_{ii}} \right)^2},
\]
where we used $\tr(P_i)\ge\sqrt{\tr(P_i^2)},$ since $P_i\in\calh^d_+$, {for the last inequality}.
\end{proof}
\vspace{0.1cm}

In view of Theorem \ref{thm:analyticlowerbound}, two  remarks are in
order. First, it follows by \eqref{eq:analyticlowerbound} that
$\CPSDR(I_n)\ge n$ and {this is} obviously  tight.
{Second}, the Cauchy-Schwartz inequality combined with the fact
{that} any cpsd matrix is entrywise nonnegative {implies} that
the lower bound \eqref{eq:analyticlowerbound}  can never exceed  the
size of the matrix.

\subsubsection{Support-based lower bound}\label{sec:supprotlowerbound}

To study  support-based lower bounds on the cpsd-rank we introduce
the following graph parameter: \be\label{eq:supportlowerbound}
f(G):=\min \{ d\ge 1:\exists   \text{ subspaces }
\{\call_i\}_{i=1}^n\subseteq \C^d \text{ s.t. } \call_i\perp \call_j
\Longleftrightarrow i\not \sim j \}. \ee

To see $f(G)$  is well-defined let $A$ be the adjacency matrix of $G$ and  let $\tau$ be its least eigenvalue with multiplicity $m$. Since  $A-\tau I\in \calS^n_+$ and  $\rank(A-\tau I )= n-m,$  there exist   vectors $\{x_i\}_{i=1}^n\subseteq \R^{n-m}$     such that $A-\tau I=\gram(\{x_i\}_{i=1}^n)$. For $i\in [n]$, set   $\call_i:=\Span({\{ x_i \}})$ and note this is  a feasible solution for~\eqref{eq:supportlowerbound} yielding $f(G) \leq n-m$.

\medskip 

\begin{theorem}\label{prop:supportlowerbound}
For any graph  $G=([n],E)$ we have that $f(G)$ is equal to
\be\label{eq:supportbasedbound}
 \min \{\CPSDR(X): X\in\CPSD^n \text{ and }S(X)=G   \}.
\ee
\end{theorem}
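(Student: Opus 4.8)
The plan is to prove the two inequalities $f(G)\le \mu(G)$ and $\mu(G)\le f(G)$ separately, where $\mu(G)$ denotes the minimum in \eqref{eq:supportbasedbound}; the second inequality will at the same time show that the set over which $\mu(G)$ is minimized is nonempty, so that $\mu(G)$ is well defined. Everything reduces to one elementary fact about psd matrices that I would record first: for $P,Q\in\calh^d_+$ one has $\tr(PQ)=0$ if and only if $\ran(P)\perp\ran(Q)$. This holds because $\tr(PQ)=\tr\big(P^{1/2}QP^{1/2}\big)$ is the trace of a psd matrix, hence vanishes iff $P^{1/2}QP^{1/2}=(Q^{1/2}P^{1/2})^*(Q^{1/2}P^{1/2})=0$ iff $Q^{1/2}P^{1/2}=0$ iff $\ran(P)=\ran(P^{1/2})\subseteq\ker(Q^{1/2})=\ran(Q)^\perp$.

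For $f(G)\le\mu(G)$: I would take $X\in\CPSD^n$ achieving the minimum, so $S(X)=G$ and $\CPSDR(X)=\mu(G)=:d$, and fix a size-optimal $\CPSD$-factorization $\{P_i\}_{i=1}^n\subseteq\calh^d_+$ of $X$. Setting $\call_i:=\ran(P_i)\subseteq\C^d$, for distinct $i,j$ the fact above gives $\call_i\perp\call_j\iff\tr(P_iP_j)=0\iff X_{ij}=0\iff i\not\sim j$, the last equivalence being exactly the statement $S(X)=G$. Hence $\{\call_i\}_{i=1}^n$ is feasible for \eqref{eq:supportlowerbound} in dimension $d$, so $f(G)\le d=\mu(G)$.

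For $\mu(G)\le f(G)$: I would take a family $\{\call_i\}_{i=1}^n\subseteq\C^d$ optimal for \eqref{eq:supportlowerbound}, so $d=f(G)$ and $\call_i\perp\call_j\iff i\not\sim j$ for $i\ne j$. Let $\Pi_i\in\calh^d_+$ be the orthogonal projector onto $\call_i$ and define $X\in\calS^n$ by $X_{ij}:=\tr(\Pi_i\Pi_j)$. Then $\{\Pi_i\}_{i=1}^n$ is by construction a $\CPSD$-factorization of $X$, so $X\in\CPSD^n$ and $\CPSDR(X)\le d$; moreover $X$ is entrywise nonnegative and symmetric, as it should be. By the fact above, for $i\ne j$ we have $X_{ij}=0\iff\call_i\perp\call_j\iff i\not\sim j$, equivalently $X_{ij}\ne0\iff i\sim j$, i.e.\ $S(X)=G$. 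Thus $X$ is admissible in \eqref{eq:supportbasedbound} and $\mu(G)\le\CPSDR(X)\le d=f(G)$. Combining the two bounds gives $f(G)=\mu(G)$.

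I do not expect a serious obstacle here; the only points that need care are bookkeeping ones: the support graph $S(X)$ constrains only the off-diagonal entries of $X$ (so nothing is imposed on $X_{ii}=\dim\call_i$, and an isolated vertex corresponds either to $\call_i=\{0\}$ or to a nonzero subspace orthogonal to all others); the biconditional in \eqref{eq:supportlowerbound} must be read over distinct indices $i\ne j$; and the "only if" direction of $S(X)=G$ in the second part relies on $i\sim j$ forcing $\call_i\not\perp\call_j$, hence the \emph{strict} positivity $X_{ij}=\tr(\Pi_i\Pi_j)>0$, which is what upgrades the automatic inclusion $S(X)\subseteq G$ to an equality.
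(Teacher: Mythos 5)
Your proof is correct and follows essentially the same route as the paper's: the ranges of the $P_i$ in a size-optimal $\CPSD$-factorization give a feasible subspace family, and conversely orthogonal projectors onto the $\call_i$ yield a $\CPSD$-factorization of a matrix with support $G$. You additionally make explicit the orthogonality fact $\tr(PQ)=0\iff\mathrm{Range}(P)\perp\mathrm{Range}(Q)$ for psd $P,Q$, which the paper uses silently, and you obtain feasibility of the minimization from the projector construction rather than from the paper's appeal to Lemma~\ref{lem:squareroot} applied to $A-\tau I$ --- the cleaner choice, since $A-\tau I$ is not in fact a $0/1$ matrix unless the least adjacency eigenvalue equals $-1$.
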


\begin{proof}
By Lemma~\ref{lem:squareroot}, the   0/1  matrix $A-\tau I$  defined in the previous paragraph is cpsd. This shows that \eqref{eq:supportbasedbound} is feasible.  Let  $X$ be optimal  for  \eqref{eq:supportbasedbound}  and let {$\{P_i\}_{i=1}^n\subseteq \calh^d_+$} be a size-optimal  $\CPSD$-factorization for $X$. For $i\in [n]$, define  {$\call_i:={\rm Range}(P_i)\subseteq \C^d$} and note this is  feasible for \eqref{eq:supportlowerbound}.
Conversely, let $\{\calL_i\}_{i=1}^n\subseteq \C^d$ be a family of subspaces feasible for \eqref{eq:supportlowerbound} and for $i\in [n]$ define $P_i$ to be the orthogonal projector {onto} $\calL_i$. Lastly, note that the matrix   $X:=\gram(\{P_i\}_{i=1}^n)\in \CPSD^n$ is feasible for \eqref{eq:supportbasedbound} and satisfies  $\CPSDR(X) \leq d$.
\end{proof}
\vspace{0.1cm}

By {Theorem}  \ref{prop:supportlowerbound} and the fact that $f(G)$  is upper bounded  by $n$  it  follows   that {support-based} lower bounds on the $\CPSDR$ never exceed the size of the~matrix.

\subsection{{{Comparisons} with other notions of rank}}\label{sec:comparison}

In this section we investigate further the {relationships} between the cpsd-rank and other
notions of matrix ranks.

\subsubsection{cpsd-rank vs. rank} \label{ssr}

As $\calh^d$ is isometrically isomorphic to $\R^{d^2}$,  we~have
\be
\label{eq:rank}
\sqrt{\rank(X)}\le \CPSDR(X),
\ee
for any $X\in \CPSD$.
We provide an example that illustrates that the above can be tight up to a constant factor. Let $r \geq 2$ be an integer and let
$E_{i,j}:=I_r+e_ie_j^\sfT+e_je_i^\sfT\in \calh^r_+$ for all $i,j\in [r]$. The matrix $X:=\gram(\{E_{i,j}\}_{i,j})\in \CPSD^{r(r-1)/2}$
has $\CPSDR(X)\le~r$, by construction, while $X$ can be easily seen to have full rank.
On the other hand, no upper bound for $\CPSDR(X)$ in terms of $\rank(X)$ is known.

\subsubsection{cpsd-rank vs. psd-rank} \label{psdvscpsd}

Given any entrywise nonnegative matrix $X \in \R^{n \times m}_{+}$,
its {\em positive semidefinite rank} {(psd-rank)}, denoted by
$\PSDR(X)$, is defined as the least integer $d\ge 1$ for which there
exist $\{A_i\}_{i=1}^n, \{B_j\}_{j=1}^m\subseteq \calh^d_+$ such
that $X_{ij}=\tr(A_iB_j)$ for all $i\in [n],j\in [m]$. Generalizing
a theorem by  Yannakakis~\cite{Yan}, it was shown in \cite{FMPTW}
and \cite{GPT} that $\PSDR(S)$ where $S$ is a slack matrix for
polytope $P$ corresponds to the smallest size of a spectrahedron
that projects onto $P$.  For further properties of the $\PSDR$ the
reader is referred to   \cite{FGPRT} and Section \ref{psdvscpsd}.

Clearly, for  any $X\in \CPSD^n$ we have that $\PSDR(X)\le \CPSDR(X)$. Furthermore, since  $\PSDR(X) \leq n,$ for any $X\in \CPSD^n$,  the example of the matrix $X$ with $\CPSDR(X) = 2^{\Omega(\sqrt{n})}$ given  in Section 5  provides an exponential separation between $\PSDR(X)$ and $\CPSDR(X)$.

We conclude this section by determining  the exact relation between
$\PSDR(X)$ and $\CPSDR(X)$. This  follows from the connection to
Bell scenarios. As both quantities are invariant under scaling by a
positive constant, {without loss of generality} we can assume
that $\sum_{i,j}X_{i,j}=1$ so that $\pbf:=(X_{ij})_{ij}$ is a
probability distribution. We can think of $\pbf$  as {a} behavior
corresponding to a $(1,1, m, n)$ Bell scenario where each party has
a unique POVM. As mentioned in the introduction, in this case the
behavior $\pbf$  is
 quantum and moreover,  $\mathcal{D}(\pbf)=\PSDR(X)$ \cite{JSWZ}. This fact combined   with  Theorem~\ref{thm:minsizequantumcorrelation}
 {implies} that $\PSDR(X)$ is equal to $$\min \left\{ \CPSDR(R):
R=\begin{pmatrix}A & X\\X^\sfT & B\end{pmatrix}   \in \CPSD^{n+m} \;
\text{ and } \;
\sum_{i,j=1}^nA_{ij}=\sum_{i,j=1}^nB_{ij}=1\right\}.$$

 In turn, this is equal to the smallest integer $d\ge 1$ for which there exist Hermitian psd matrices  $\{A_i\}_{i=1}^n, \{B_j\}_{j=1}^m\subseteq \calh^d_+$ such that $X_{ij}=\tr(A_iB_j),$ for all $i\in [n],j \in [m]$ and $\sum_{i=1}^n A_i=\sum_{j=1}^m B_j$. As  a corollary we also get that in any psd-factorization of $X$ we may assume that the psd  factors  satisfy $\sum_{i=1}^n A_i=\sum_{j=1}^m B_j$.

\subsubsection{cpsd-rank  vs.  cp-rank}

For all matrices $X\in \CP$ we clearly have   that
$$\Omega(\CPR(X)^{1/4}) \leq \CPSDR(X)\le \CPR(X).$$

The lower bound follows from  the fact that $\CPR(X)\le \binom{\rank(X)+1}{2}-1,$ for all $X\in \CP$ (e.g. see   \cite[Theorem 3.5]{CP}) combined with \eqref{eq:rank}.

We now give an example where $\CPSDR(X)=\CPR(X)$.  For this, let  $a\in~(0,3/4)$ and set $X_a:=I_3+ae_1e_3^\sfT+ae_3e_1^\sfT$.
 Recall that  $\CPSD^3=\CP^3=\DNN^3$. By \cite[Theorem 3.2]{CP} we have that $\CPR(X_a)=\rank(X_a)=3$.  From Theorem \ref{thm:analyticlowerbound} it follows that $\CPSDR(X_a)\ge 3$, thus $\CPSDR(X)=\CPR(X)$ for this case.

Lastly, the  example given in Section \ref{ssr} also provides a quadratic separation between the cp-rank and the cpsd-rank.
The matrix $X \in \CP$ as it is the Gram matrix of $E_{i,j}$ which has {nonnegative} entries. Further, $\CPR(X) \geq \rank(X)=\binom{r}{2}$ while $\CPSDR(X) \leq r$, by construction.

\section{Gram-Lorentz matrices}\label{sec:gramlorentz}

As already mentioned,  it is  currently not known  whether
the $\CPSDR$ of all  matrices in $\CPSD^n$ admits a finite upper bound. In this section we identify a family of $\CPSD$ matrices for which it is possible to prove a finite upper bound. These  are the {\em Gram-Lorentz matrices}, i.e.,  Gram matrices of Lorentz cone vectors. In Section~\ref{sec:embedding} we recall a construction from  \cite{FW}   where it is shown that the Lorentz cone can be
isometrically embedded into a psd  cone  of an
appropriate size.  This implies that Gram-Lorentz matrices are cpsd.  In Section \ref{sec:glmatrices}  we show  that the $\CPSDR$ of a Gram-Lorentz matrix is upper bounded in terms of its rank. Lastly, in Section~\ref{sec:cpsdcpseparation} we use Gram-Lorentz matrices  to construct matrices in $\CPSD\setminus \CP$, generalizing a construction from \cite{FW}.

\subsection{Embedding the Lorentz cone isometrically into $\calh^d_+$}\label{sec:embedding}

Underlying the results in this section is  a linear embedding of  vectors in $\R^n$  into  traceless Hermitian operators of size $ 2^{\left\lfloor \frac{n}{2}\right \rfloor},$ so that inner products are preserved up to a constant factor and unit vectors get mapped to $\pm 1$ observables.

In fact, this embedding corresponds to a   complex  representation of the Clifford algebra over $(\R^n,\la \cdot,\cdot\ra)$ defined in terms of the so-called  {Brauer-Weyl} matrices. For more details see \cite{GW} and Section \ref{sec:Clifford} in the Appendix.
This embedding is also the main  ingredient in Tsirelson's characterization of binary outcome correlations~\cite{TS87}.
\vspace{0.1cm}

\begin{theorem}\label{thm:cliffordalgebramain}
There exists a linear map $\gamma: \R^n\rightarrow \calh^d,  $ where    $d=2^{\left\lfloor \frac{n}{2}\right \rfloor}$   such that:
\bi
\item[$(i)$] For all $x\in \R^n$ we have that $\tr\left(\gamma(x)\right)=0$;
\item[$(ii)$]   For all $x\in \R^n$ with {$\|x\|=1$}  we have $\gamma(x)^2=I_d$;
\item[$(iii)$] For all $x,y\in \R^n$  we have  $d\cdot \la x,y\ra=\tr\left( \gamma(x)\gamma(y)\right)$.
\ei
\end{theorem}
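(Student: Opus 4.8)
The plan is to deduce the theorem from the existence of a ``Clifford system'': a family of $n$ Hermitian matrices of size $d=2^{\lfloor n/2\rfloor}$ that are traceless, square to the identity, and pairwise anticommute. Such a system is then assembled explicitly from the Pauli matrices, and the map $\gamma$ is obtained by sending the $i$-th coordinate to the $i$-th generator.

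First I would record the reduction. Suppose $\Gamma_1,\dots,\Gamma_n\in\calh^d$ satisfy $\tr(\Gamma_i)=0$, $\Gamma_i^2=I_d$, and $\Gamma_i\Gamma_j=-\Gamma_j\Gamma_i$ for $i\ne j$. Define $\gamma(x):=\sum_{i=1}^n x_i\Gamma_i$; this is linear, and $\gamma(x)$ is Hermitian since the $x_i$ are real and the $\Gamma_i$ are Hermitian, so $\gamma$ maps into $\calh^d$. Property $(i)$ is immediate. For $(ii)$ and $(iii)$, expand using the relations above:
\[
\gamma(x)\gamma(y)=\la x,y\ra\, I_d+\sum_{1\le i<j\le n}(x_iy_j-x_jy_i)\,\Gamma_i\Gamma_j .
\]
Each product $\Gamma_i\Gamma_j$ with $i\ne j$ is traceless, because $\tr(\Gamma_i\Gamma_j)=-\tr(\Gamma_j\Gamma_i)=-\tr(\Gamma_i\Gamma_j)$ by the anticommutation relation and cyclicity of the trace. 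Taking traces of the displayed identity then gives $\tr(\gamma(x)\gamma(y))=d\,\la x,y\ra$, which is $(iii)$; specializing to $y=x$ with $\|x\|=1$ makes the sum vanish and yields $\gamma(x)^2=I_d$, which is $(ii)$.

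Next I would construct the $\Gamma_i$. Write $n=2k$ or $n=2k+1$; in either case $d=2^{k}$, with ambient space $(\C^2)^{\otimes k}$. Using the Pauli matrices $X,Y,Z$ and $I=I_2$, set for $1\le j\le k$
\[
\Gamma_{2j-1}:=Z^{\otimes(j-1)}\otimes X\otimes I^{\otimes(k-j)},\qquad
\Gamma_{2j}:=Z^{\otimes(j-1)}\otimes Y\otimes I^{\otimes(k-j)},
\]
and, if $n$ is odd, additionally $\Gamma_{n}:=Z^{\otimes k}$; these are the Brauer-Weyl matrices (cf.\ \cite{GW} and Section~\ref{sec:Clifford}). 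Each $\Gamma_i$ is Hermitian as a tensor product of Hermitian matrices and squares to $I_d$ since every tensor factor squares to $I_2$. Tracelessness follows from multiplicativity of the trace over $\otimes$, since every $\Gamma_i$ carries at least one traceless Pauli factor; this uses $k\ge1$, i.e.\ $n\ge2$ (the degenerate case $n=1$ may be set aside).

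The last point to check — the only one requiring any care — is the pairwise anticommutation, which I would verify slot by slot. For $\Gamma_a,\Gamma_b$ with $a<b$, let $i\le j$ be their ``special'' slots: on slots before $i$ both factors equal $Z$ and commute; at slot $i$ the factors are $X$ or $Y$ against $Z$ (if $i<j$), or $X$ against $Y$ (if $i=j$), and in either case anticommute; on every later slot at least one factor is $I_2$ (or the two are equal), so those commute. Exactly one anticommuting slot gives $\Gamma_a\Gamma_b=-\Gamma_b\Gamma_a$. I expect this slot-by-slot bookkeeping, together with tracking the parity of $n$ through the dimension count, to be the main (entirely elementary) obstacle; everything else is formal.
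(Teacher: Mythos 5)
Your proposal is correct and takes essentially the same approach as the paper: both construct the Brauer--Weyl matrices from tensor products of Pauli matrices, verify they form a pairwise-anticommuting system of traceless involutions, and deduce $(i)$--$(iii)$ by extending linearly and using the anticommutation relations (you do so via the explicit product expansion, while the paper records the equivalent anticommutator identity $\gamma(x)\gamma(y)+\gamma(y)\gamma(x)=2\la x,y\ra I_d$). The only differences are cosmetic: a relabeling of the generators (pairwise interleaved vs.\ block-ordered) and your explicit caveat about $n=1$, where the statement degenerates in the paper's construction as well.
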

\medskip

Specifically, when   $n=2\ell$ we define:
 \be\label{eq:firsthalf}
 \gamma(e_i)= Z^{\otimes(i-1)}\otimes X\otimes I_2^{\otimes (\ell-i)} \in \calh^d,  \ (i\in [\ell]),
 \ee
 and
 \be\label{eq:secondhalf}
 \gamma(e_{i+\ell})= Z^{\otimes(i-1)}\otimes Y\otimes I_2^{\otimes (\ell-i)}\in \calh^d, \ (i\in [\ell]).
\ee
When  $n=2\ell+1$ we define $\{\gamma(e_i)\}_{i=1}^{2\ell}$ as in \eqref{eq:firsthalf} and \eqref{eq:secondhalf}  and  set $\gamma(e_{2\ell+1})= Z^{\otimes \ell}.$  Lastly, we extend $\gamma$ linearly, i.e.,  $\gamma(x)=\sum_{i=1}^nx_i\gamma(e_i),$ for any  $x=\sum_{i=1}^n x_i e_i\in \R^n$.

By definition of $\gamma$ it follows that $\tr(\gamma(x))=0$ for all
$x\in \R^n$. Furthermore, note  that
$\gamma(e_i)\gamma(e_j)+\gamma(e_j)\gamma(e_i)=2\delta_{ij}I_d, $
for all $i,j$, which implies that \be\label{eq:clifford}
\gamma(x)\gamma(y)+\gamma(y)\gamma(x)=2\la x,y\ra I_d, \text{ for
all } x,y\in \R^n. \ee For any $x\in \R^n$ with ${\|x\|=1}$, setting
$x=y$ in \eqref{eq:clifford} we get that $\gamma(x)^2=I_d$. Lastly,
taking traces in \eqref{eq:clifford} we see that ${d\cdot\la
x,y\ra}=\tr\left( \gamma(x)\gamma(y)\right)$ for all $x,y\in \R^n$.

Next we introduce a convex cone which plays a central role in this work.
\medskip
\begin{definition}The {$m$-dimensional} {\em Lorentz cone}, denoted $\calL_m$, is defined as the set of vectors in $ \R^m$ whose angle with the vector $e_1\in \R^m$ does not exceed $\pi/4$, i.e.,
\[ \calL_m=\big \{(c,x)\in \R\times  \R^{m-1}: c\ge \|x\|\}.\]
\end{definition}

It was shown  in \cite{FW}  that the Lorentz cone can be
isometrically embedded into the cone of psd  matrices of an
appropriate dimension. For the convenience of the reader, we
include a short new proof of the existence of the isometric embedding.

\medskip 

\begin{theorem}[\cite{FW}]\label{thm:isometry} Set $d:= {2^{\lfloor \frac{m-1}{2} \rfloor}}$.
There exists {an} isometry $\Gamma: \real^m \rightarrow \calh^d,$  such that
 $$\calL_m=\{(c,x)\in {\R \times \R^{m-1}}: {\Gamma((c,x))} \in \calh^d_+\}.$$
\end{theorem}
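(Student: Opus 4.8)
The plan is to build the isometry $\Gamma$ directly from the Clifford representation $\gamma$ of Theorem~\ref{thm:cliffordalgebramain}. Write a vector of $\R^m$ as $(c,x)$ with $c\in\R$ and $x\in\R^{m-1}$. The natural candidate is
\[
\Gamma((c,x)) := \frac{1}{\sqrt{2}}\left( c\, I_d + \gamma(x)\right),
\]
where $\gamma:\R^{m-1}\to\calh^d$ is the Clifford embedding with $d = 2^{\lfloor (m-1)/2\rfloor}$. First I would check that this is linear in $(c,x)$ (clear, since $\gamma$ is linear) and that it is an isometry onto its image: using $\tr(I_d)=d$, $\tr(\gamma(x))=0$ (part $(i)$) and $\tr(\gamma(x)\gamma(y)) = d\,\langle x,y\rangle$ (part $(iii)$), one computes
\[
\langle \Gamma((c,x)), \Gamma((c',x'))\rangle = \tfrac12\left( c c'\, d + c\,\tr(\gamma(x')) + c'\,\tr(\gamma(x)) + \tr(\gamma(x)\gamma(x'))\right) = \tfrac{d}{2}\left( c c' + \langle x,x'\rangle\right),
\]
which is a positive scalar multiple of the standard inner product on $\R^m$; rescaling $\Gamma$ (or rescaling the inner product on $\calh^d$, which the paper has fixed as Hilbert--Schmidt) makes it a genuine isometry, or one simply notes ``isometry up to the fixed constant $d/2$'' is what is meant, matching the phrasing used for $\gamma$ and $T$ elsewhere.

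The heart of the matter is the second claim: $(c,x)\in\calL_m \iff \Gamma((c,x))\in\calh^d_+$. The key observation is that $\gamma(x)$ is a Hermitian matrix all of whose eigenvalues are $\pm\|x\|$. Indeed, by \eqref{eq:clifford} with $y=x$ we get $\gamma(x)^2 = \|x\|^2 I_d$, so the minimal polynomial of $\gamma(x)$ divides $t^2 - \|x\|^2$, forcing $\mathrm{spec}(\gamma(x))\subseteq\{\pm\|x\|\}$ (and if $x\neq 0$, both values occur since $\tr(\gamma(x))=0$). Consequently the eigenvalues of $cI_d + \gamma(x)$ are among $c\pm\|x\|$, and $cI_d+\gamma(x)\succeq 0$ if and only if $c - \|x\|\ge 0$, i.e. if and only if $(c,x)\in\calL_m$. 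The scalar $1/\sqrt2$ is positive so it does not affect positive semidefiniteness. This gives the equivalence, completing the proof.

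The step that needs the most care is the spectral argument: one must justify that $\gamma(x)^2 = \|x\|^2 I_d$ genuinely pins down the spectrum to $\{\pm\|x\|\}$ (true because $\gamma(x)$ is Hermitian, hence diagonalizable, so its eigenvalues are exactly the roots of its minimal polynomial), and handle the degenerate case $x=0$ separately (there $\gamma(0)=0$ and $\Gamma((c,0)) = (c/\sqrt2) I_d$, psd iff $c\ge 0 = \|0\|$, consistent). I would also remark that $\Gamma$ is not surjective onto $\calh^d$ in general — it is an isometric embedding of the $m$-dimensional space $\R\oplus\R^{m-1}$ — and that composing $\Gamma$ with Gram-matrix formation shows that Gram matrices of Lorentz-cone vectors lie in $\CPSD^n$, which is the use we make of this theorem in the sequel. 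No serious obstacle is expected; the only subtlety is bookkeeping the constant factor $d/2$ in ``isometry,'' which is exactly the same convention already in force for the map $\gamma$.
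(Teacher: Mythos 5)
Your proof takes exactly the paper's approach: define $\Gamma$ by lifting $\gamma$, use parts $(i)$--$(iii)$ of Theorem~\ref{thm:cliffordalgebramain} for the isometry computation, and use $\gamma(x)^2 = \|x\|^2 I_d$ together with $\tr(\gamma(x))=0$ to pin the spectrum of $\gamma(x)$ to $\{\pm\|x\|\}$, whence $cI_d + \gamma(x) \succeq 0 \iff c\ge\|x\|$. The spectral step and the $x=0$ degenerate case are handled carefully and correctly.

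The one genuine slip is the normalization. You set $\Gamma((c,x)) = \tfrac{1}{\sqrt 2}(cI_d + \gamma(x))$, which by your own computation gives $\langle\Gamma((c,x)),\Gamma((c',x'))\rangle = \tfrac{d}{2}\langle(c,x),(c',x')\rangle$ — not an isometry for $m\ge 4$. The correct choice, and the one the paper uses, is $\Gamma((c,x)) = \tfrac{1}{\sqrt d}(cI_d + \gamma(x))$, which makes the $d$ cancel exactly and yields $\langle\Gamma((c,x)),\Gamma((c',x'))\rangle = cc' + \langle x,x'\rangle$ with no residual constant. Your proposed escape routes are not quite right: $\gamma$ itself is \emph{not} an isometry (it scales inner products by $d$, which is precisely why $\Gamma$ needs the compensating $1/\sqrt d$), and the $1/\sqrt2$ in the map $T$ of \eqref{eq:comtorealpsd} compensates for the real-doubling of dimension, which is an unrelated phenomenon. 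Since the theorem asserts a genuine isometry (and this is also needed later, e.g.\ in Lemma~\ref{cddfsvdfv} where one writes $\langle\Gamma(\ell^x_a),\Gamma(\tilde\ell^y_b)\rangle = \langle\ell^x_a,\tilde\ell^y_b\rangle$ with no stray factor), you should simply replace $1/\sqrt2$ by $1/\sqrt d$; then everything else in your argument goes through verbatim and no rescaling caveat is needed.
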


\begin{proof}
For $(c,x)\in \R^m=\R\times  \R^{m-1}$ define
\be\label{eq:isometry}
{\Gamma((c,x))}={1\over \sqrt{d}}\left(cI_d+\gamma(x)\right).
\ee
To see that $\Gamma$ defines  an isometry note that
\[
{
\tr ( \gamma((c,x)) \gamma((c',x')))
=
cc' + \inner{x}{x'}
=
\inner{(c,x)}{(c',x')}.
}
\]

Lastly, recall that $\gamma(x)^2=\|x\|^2I_d$  and  thus the eigenvalues of $\gamma(x)$ are given by ${\pm \|x\|}$.  Consequently,
$\Gamma((c,x))\in \calh^d_+$  if and only if $  c \ge {\|x\|}$.
\end{proof}
\vspace{0.1cm}

For concreteness,  below  we explicitly describe  the isometry $\Gamma: \R^3\rightarrow~\calh^2$.
\medskip 

\begin{example} \label{ex:23isometry}
Let $(c,v,w)\in \calL_3$ (so $v,w\in \R)$.  By Theorem  \ref{thm:cliffordalgebramain} we have that
$$\gamma(v,w)=vX+wY=\begin{pmatrix}
0 & v-iw\\v+iw & 0
\end{pmatrix}.
$$

Thus, substituting this  into \eqref{eq:isometry}  we see that
\be\label{eq:32phi}
\Gamma(c, v, w)={1\over \sqrt{2}}\begin{pmatrix}
c & v-iw\\v+iw & c
\end{pmatrix}.
\ee Note that ${\Gamma((c, v, w))} \in \calh^2_+$   since $c \geq 0$ and the
determinant $c^2-(|v|^2+|w|^2) \geq 0$ since $(c,v,w)\in \calL_3$. Lastly, notice that if $(c,v,w)$ lies  on the boundary of $\calL_3$, then the determinant is $0$ and thus ${\Gamma((c, v, w))}$ has rank 1.
\end{example}

\subsection{Gram-Lorentz matrices}\label{sec:glmatrices}

{Theorem \ref{thm:isometry}  suggests  the following definition.}
\medskip 
\begin{definition} A
matrix $X {\in \Herm^n}$ is called  {\em Gram-Lorentz} if  there exist  vectors $\{\ell_i\}_{i=1}^n\subseteq \calL_m$ (for some $m\ge 1$) such that $X=\gram(\{\ell_i\}_{i=1}^n).$
We denote  the set of $n\times n$ Gram-Lorentz matrices by~$\CL^n$.
\end{definition}
\medskip 

The study of the set of Gram-Lorentz matrices is motivated as follows.  Firstly, in view of Theorem~\ref{thm:isometry}, we have  that
 $ \CL^n\subseteq \CPSD^n$. Identifying matrices in {$\CL\setminus~\CP$} therefore provides a systematic approach for finding matrices
{in} $\CPSD \setminus \CP$. All known examples of matrices in $\CPSD \setminus \CP$ are  constructed exactly in this manner \cite{FW}.
Secondly,  as we show  in this {section}, the $\CPSDR$ of a Gram-Lorentz matrix  can be upper bounded in terms of its rank. Since it {is} currently unknown whether the $\CPSDR$ of all  matrices in $\CPSD^n$  admits a finite upper bound, it is instructive
to identify families of matrices in $\CPSD^n$ for which there is {one}.

It is not  clear from its definition whether $\CL$ is convex. In fact, the following~holds.

\medskip
\begin{lemma} The set
$\CL^n$  is convex if and only if $n\le 2$.
\end{lemma}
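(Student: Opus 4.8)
The plan is to prove the two directions separately. For the easy direction, I would show that $\CL^1$ and $\CL^2$ are convex. For $n=1$, a $1\times 1$ Gram-Lorentz matrix is just $\|\ell\|^2$ for a Lorentz vector $\ell = (c,x)$ with $c \geq \|x\|$; since the map $(c,x) \mapsto c^2 - \|x\|^2$ can take any value in $[0,\infty)$ while $\|\ell\|^2 = c^2 + \|x\|^2$ ranges over all of $[0,\infty)$ as well, we get $\CL^1 = \R_{\geq 0}$, which is convex. For $n=2$, a Gram-Lorentz matrix has the form $\left(\begin{smallmatrix} a & c \\ c & b\end{smallmatrix}\right)$ with $a,b,c \geq 0$; I would check that every such matrix with $ab \geq c^2$ (i.e. every $2\times 2$ doubly nonnegative matrix) arises as the Gram matrix of two Lorentz vectors — for instance, embed into $\calL_3$ via Example~\ref{ex:23isometry} or construct the vectors directly — and hence $\CL^2 = \DNN^2$, which is convex.

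For the hard direction, I would show $\CL^n$ is not convex for $n \geq 3$; by restricting to principal submatrices it suffices to handle $n = 3$. The idea is to exhibit two matrices $X, Y \in \CL^3$ whose midpoint $\tfrac12(X+Y)$ is not Gram-Lorentz. A natural source of obstruction: a Gram-Lorentz matrix of rank $\leq 2$ is Gram of Lorentz vectors lying in a $2$-dimensional subspace, and a $2$-dimensional slice of any Lorentz cone is a ``flat'' $2$-dimensional cone (the intersection of two halfplanes through the origin), so rank-$2$ Gram-Lorentz matrices correspond to Gram matrices of three vectors drawn from a $2$-dimensional cone that is contained in an angle of at most... — more precisely, any two-dimensional subspace meets $\calL_m$ in a cone spanned by at most two rays whose mutual angle is at most $\pi/2$ (since all of $\calL_m$ lies within angle $\pi/4$ of $e_1$). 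So a rank-$2$ matrix in $\CL^3$ is the Gram matrix of three unit-rescaled vectors whose pairwise angles are all realized inside a single convex angular sector of opening $\leq \pi/2$, and this is a genuine constraint — equivalently it forces a sign/ordering condition on the angles (one angle must equal the sum or difference of the other two). I would pick $X$ and $Y$ to be two rank-$2$ matrices in $\CL^3$, each satisfying this constraint but with ``opposite'' orderings, so that $\tfrac12(X+Y)$ still has rank $2$ (choosing $X,Y$ with the same range to keep the average rank $2$) yet violates the constraint, hence is not in $\CL^3$.

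Concretely, I would take $X = \gram(x_1,x_2,x_3)$ and $Y = \gram(y_1,y_2,y_3)$ where all $x_i, y_i$ live in $\R^2$ embedded as the slice $\{(c,t,0)\} \subseteq \calL_3$, i.e. the sector $\{c \geq |t|\}$, choosing angles so that in $X$ the vectors appear in cyclic order $x_1, x_3, x_2$ while in $Y$ they appear in order $y_1, y_2, y_3$; then compute $Z = \tfrac12(X+Y)$, verify it is doubly nonnegative of rank $2$ but that no ordering of three planar vectors can reproduce the entries $Z_{12}, Z_{13}, Z_{23}$ within a $90^\circ$ sector, and finally argue that if $Z$ were in $\CL^3$ at all it would have to be realized in rank $2$ (since a rank-$2$ psd matrix that is Gram-Lorentz can only be realized by Lorentz vectors spanning a $2$-dimensional space). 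The main obstacle I anticipate is precisely this last point — showing that a low-rank matrix in $\CL^n$ forces a low-dimensional Lorentz realization — together with pinning down the exact angular constraint characterizing rank-$2$ members of $\CL^3$; once that constraint is made precise, choosing the violating pair $X,Y$ and checking $Z = \tfrac12(X+Y)$ against it is a short explicit computation that I would not carry out in full here.
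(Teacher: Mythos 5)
Your $n\leq 2$ direction matches the paper: both you and the authors show $\CL^2 = \DNN^2$, from which convexity follows.

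Your strategy for $n\geq 3$, however, has a fatal flaw: the angular constraint you want to exploit does not exist, because \emph{every} doubly nonnegative matrix of rank $\leq 2$ is Gram-Lorentz. Take any Gram realization of such a matrix by vectors $v_1,\dots,v_n\in\R^2$ and place $v_1$ at angle $0$. Nonnegativity of $\langle v_1,v_i\rangle$ forces each $v_i$ to sit at an angle $\alpha_i\in[-\pi/2,\pi/2]$, and nonnegativity of $\langle v_i,v_j\rangle$ forces $|\alpha_i-\alpha_j|\leq\pi/2$; since all the $\alpha_i$ live in an interval of length $\pi$, this means $\max_i\alpha_i-\min_i\alpha_i\leq\pi/2$, so the whole configuration lies in a $90^\circ$-sector. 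That sector is isometric to $\calL_2\subseteq\calL_3$, so the Gram matrix lies in $\CL^n$. Consequently a midpoint $\tfrac12(X+Y)$ of rank $2$ can never serve as a counterexample, and the ``ordering obstruction'' you anticipate is vacuous in rank $2$; there is nothing to compute. Any counterexample must have rank at least $3$. The paper uses precisely such a matrix, $2I_n$ (rank $n$): it is in the convex hull of $\CL^n$ since $2I_n = \tfrac1n\sum_{i=1}^n\,(2n\,e_ie_i^\sfT)$ with each $2n\,e_ie_i^\sfT\in\CL^n$, yet if $2I_n = \gram(\{(t_i,u_i)\}_i)$ with $(t_i,u_i)\in\calL_m$, orthogonality and Cauchy--Schwarz give $t_it_j = -\langle u_i,u_j\rangle \leq \|u_i\|\|u_j\| \leq t_it_j$, forcing $t_i=\|u_i\|=1$ and $u_i=-u_j$ for all $i\neq j$, which is impossible once $n\geq 3$. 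If you want to salvage your plan of averaging two rank-$2$ elements, note that you would have to allow the midpoint to jump to rank $3$ (i.e.\ not insist the ranges agree), at which point the planar-sector analysis no longer applies and you are back to needing a genuinely three-dimensional obstruction like the paper's.
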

\medskip 

\begin{proof}
First we show that $\CL^2=\DNN^2$. For this, let   $A := \left( \begin{smallmatrix} a & b \\ b & c \end{smallmatrix} \right)\in~\DNN^2$. We now show how to write it as the Gram matrix of vectors in $\calL_3$. Note that $ac\ge b^2$ and wlog  assume that  $a \geq c >0.$
Set
$ v_1:=  \sqrt{\frac{a}{2}}\left( 1,1, 0 \right)$ and $ v_2: =  \sqrt{\frac{c}{2}} \left(1, d , \sqrt{1-d^2} \right),$
where $d: = (2b - \sqrt{ac})/\sqrt{ac}$.
Lastly, note   that $A=\gram(\{ v_1, v_2 \})$ and that $\{v_i\}_{i=1}^2\subseteq \calL_3$.
Since $\CL^2 \subseteq \DNN^2$, the two sets are equal, and thus $\CL^2$ is~convex.

Next we show that  $\CL^n$ is not convex for $n\geq 3$. Since $\{ e_ie_i^\sfT \}_{i=1}^n\subseteq   \CL^n$  we have that $2 I_n$ is in the convex hull of $\CL^n$.
It {is sufficient} to show  that $2 I_n \not\in \CL^n$ for $n \geq 3$. To
this end, suppose that $2 I_n$ is the Gram {matrix} of the Lorentz vectors
$\{(t_i, u_i)\}_{i=1}^n$. This implies that $t_i^2 + \norm{u_i}_2^2 =
2,$ for all $i\in [n]$ and  $t_i t_j + \inner{u_i}{u_j} = 0$ for all
$i\ne j\in [n]$. Since $t_i\ge \|u_i\|$ for all $i\in [n]$, the
Cauchy-Schwarz inequality  implies  that    $t_i t_j \geq \norm{u_i}_2
\norm{u_j}_2 \geq |\inner{u_i}{u_j}| =t_i t_j,$  for all $i\ne j\in
[n]$. Thus,  equality holds throughout which shows  that  $t_i =
\norm{u_i}_2 = 1,$ for all $i\in [n]$, and that $u_i=-u_j$ for all
$i\ne j\in [n]$. This gives a contradiction since $n \geq 3$.
\end{proof}

We now show that for any  $\CL$ matrix we can place an upper  bound on the  dimension of the Lorentz cone we need to generate it.

\medskip 
\begin{lemma}\label{prop:lorentzbound}
Any   $X\in \CL^n$ has  a $\CL$-factorization using vectors in $\calL_{\rank(X)+2}.$
\end{lemma}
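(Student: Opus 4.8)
Let $X \in \CL^n$ and write $X = \gram(\{\ell_i\}_{i=1}^n)$ with $\{\ell_i\}_{i=1}^n \subseteq \calL_m$ for some $m \geq 1$; my goal is to produce a new $\CL$-factorization living inside $\calL_{\rank(X)+2}$. The key structural observation is that each Lorentz vector splits as $\ell_i = (c_i, x_i) \in \R \times \R^{m-1}$, and the Gram matrix entries are $X_{ij} = c_i c_j + \inner{x_i}{x_j}$. The idea is to compress the "spatial part" $\{x_i\}_{i=1}^n \subseteq \R^{m-1}$ into a lower-dimensional space while leaving the first coordinates $c_i$ untouched. Since the $c_i$ contribute a rank-one term to $X$, the matrix $X' := (\inner{x_i}{x_j})_{ij} = X - cc^\sfT$ (with $c = (c_1,\dots,c_n)^\sfT$) has $\rank(X') \leq \rank(X) + 1$. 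The vectors $\{x_i\}$ therefore span a subspace of dimension at most $\rank(X)+1$, so after an orthogonal change of coordinates I may assume $\{x_i\}_{i=1}^n \subseteq \R^{\rank(X)+1}$, with inner products unchanged.

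First I would carry out this reduction carefully: replace each $\ell_i = (c_i, x_i)$ by $\ell_i' := (c_i, x_i') \in \R \times \R^{\rank(X)+1}$, where $x_i' $ is the image of $x_i$ under an isometry of $\Span\{x_j\}_{j=1}^n$ into $\R^{\rank(X)+1}$ (padding with zeros as needed). Then $\inner{x_i'}{x_j'} = \inner{x_i}{x_j}$ and $\|x_i'\| = \|x_i\|$, so $\gram(\{\ell_i'\}) = \gram(\{\ell_i\}) = X$, and moreover $c_i \geq \|x_i\| = \|x_i'\|$ still holds, i.e.\ $\ell_i' \in \calL_{\rank(X)+2}$. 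This already gives a $\CL$-factorization in $\calL_{\rank(X)+2}$, which is exactly what is claimed.

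The one subtlety to check is the rank bookkeeping: I want $\dim \Span\{x_i\}_{i=1}^n \leq \rank(X) + 1$. This follows since $X' = X - cc^\sfT$ satisfies $\rank(X') \le \rank(X) + 1$ by subadditivity of rank, and $\rank(X') = \rank(\gram(\{x_i\})) = \dim\Span\{x_i\}_{i=1}^n$ by the standard identity relating Gram matrix rank to the dimension of the span (quoted in the Linear Algebra preliminaries). So $\dim\Span\{x_i\} \le \rank(X)+1$, and the target cone is $\calL_{(\rank(X)+1)+1} = \calL_{\rank(X)+2}$, as desired.

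I do not expect a genuine obstacle here; the only thing requiring a word of care is the degenerate case where some $c_i$ or $x_i$ vanish, or where $\Span\{x_i\}$ already has dimension $\le \rank(X)+1$ — in all these cases the isometric embedding into $\R^{\rank(X)+1}$ still goes through (possibly non-uniquely), and the Lorentz cone membership is preserved because the embedding is norm-preserving on the relevant subspace. If one wants a uniform statement, note that $\calL_k$ embeds isometrically into $\calL_{k'}$ for $k \le k'$ by padding with zeros, so there is no harm in always landing in $\calL_{\rank(X)+2}$.
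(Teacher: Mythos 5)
Your argument is correct and is essentially the paper's own: both decompose each Lorentz vector into its first coordinate and spatial part, observe that $\gram(\{x_i\})=X-cc^\sfT$ has rank at most $\rank(X)+1$ by subadditivity, re-embed the spatial parts isometrically into $\R^{\rank(X)+1}$, and note that since the norms of the spatial parts are preserved the resulting vectors still lie in the Lorentz cone $\calL_{\rank(X)+2}$. The only cosmetic difference is notation ($c_i,x_i$ vs.\ the paper's $t_i,u_i$); the substance is identical.
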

\medskip

\begin{proof}
Since $X\in \CL^n$ there exist vectors  $\{\ell_i\}_{i=1}^n\subseteq \calL_m$ (for some $m\ge 1$) such that $X= \gram( \{ \ell_i \}_{i=1}^n )$. For $i\in [n]$ set  $\ell_i:=(t_i,u_i)$, where $u_i\in \R^{m-1}$ and $\|u_i\| \le t_i$.  Define $U:= \gram( \{ u_i \}_{i=1}^n )$ and $t:=\sum_{i=1}^n t_ie_i$ and note that  $X=U+tt^\sfT$. Since $U$ is psd of rank at most $r:=\rank(X)+1$, there exists a family of vectors $\{\tilde{u}_i\}_{i=1}^n\subseteq \R^r$ such that   $U= \gram( \{ \tilde{u}_i \}_{i=1}^n )$. Lastly, since $\| \tilde{u}_i \| = \| u_i \|$ for all $i\in [n]$ it follows that  the vectors $\tilde{\ell}_i:=(t_i,\tilde{u}_i)$ lie in {$\calL_{r+1}$} and satisfy $ X= \gram( \{ \tilde{\ell}_i \}_{i=1}^n )$.
\end{proof}
\medskip 

\begin{theorem}\label{upperboundcpsd}
For any matrix $X\in \CL^n$ we have that  $X\in \CPSD^n$ and
\be\label{cor:glupperbound}
 \CPSDR(X)\le 2^{\lfloor (\rank(X)+1)/2\rfloor}.
\ee
\end{theorem}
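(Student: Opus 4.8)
The plan is to combine Lemma~\ref{prop:lorentzbound} with the isometric embedding furnished by Theorem~\ref{thm:isometry}. First I would apply Lemma~\ref{prop:lorentzbound} to obtain vectors $\{\ell_i\}_{i=1}^n\subseteq \calL_m$, where $m:=\rank(X)+2$, such that $X=\gram(\{\ell_i\}_{i=1}^n)$. Then I would set $d:=2^{\lfloor (m-1)/2\rfloor}=2^{\lfloor(\rank(X)+1)/2\rfloor}$ and invoke Theorem~\ref{thm:isometry} to get an isometry $\Gamma:\R^m\to\calh^d$ with the property that $\Gamma(y)\in\calh^d_+$ precisely when $y\in\calL_m$. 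Defining $P_i:=\Gamma(\ell_i)$ for each $i\in[n]$, the membership $\ell_i\in\calL_m$ gives $P_i\in\calh^d_+$.

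It then remains to check that $\{P_i\}_{i=1}^n$ is a $\CPSD$-factorization of $X$. Since $\Gamma$ is an isometry (for the Hilbert--Schmidt inner product on $\calh^d$) and each $P_j$ is Hermitian, one has
\[
\tr(P_iP_j)=\tr(P_iP_j^*)=\inner{\Gamma(\ell_i)}{\Gamma(\ell_j)}=\inner{\ell_i}{\ell_j}=X_{ij}\qquad\text{for all }i,j\in[n],
\]
where the final equality is the assumption $X=\gram(\{\ell_i\}_{i=1}^n)$. Hence $X\in\CPSD^n$, and because each $P_i$ has size $d$, we get $\CPSDR(X)\le d=2^{\lfloor(\rank(X)+1)/2\rfloor}$, which is exactly~\eqref{cor:glupperbound}.

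I do not expect a genuine obstacle here; the statement is essentially an assembly of the two preceding results. The only point deserving care is the bookkeeping: verifying that applying Theorem~\ref{thm:isometry} to $\calL_m$ with $m=\rank(X)+2$ produces ambient dimension exactly $2^{\lfloor(\rank(X)+1)/2\rfloor}$ (i.e.\ that $\lfloor(m-1)/2\rfloor=\lfloor(\rank(X)+1)/2\rfloor$), and that the single map $\Gamma$ simultaneously preserves all pairwise inner products and sends every Lorentz vector into $\calh^d_+$, so that the images form a bona fide $\CPSD$-factorization of the prescribed size.
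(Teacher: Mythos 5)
Your proposal is correct and matches the paper's argument exactly: the paper's proof is simply the one-line observation ``combine Theorem~\ref{thm:isometry} with Lemma~\ref{prop:lorentzbound},'' and you have spelled out precisely that combination, with the correct bookkeeping $m=\rank(X)+2$ giving $d=2^{\lfloor(m-1)/2\rfloor}=2^{\lfloor(\rank(X)+1)/2\rfloor}$.
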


\begin{proof}
The proof follows by combining Theorem \ref{thm:isometry} with Lemma \ref{prop:lorentzbound}.
\end{proof}
\vspace{0.1cm}

In Section \ref{sec:explowerbounds} we show that this bound is essentially tight (cf. Remark \ref{rem:tightlowerbound}).

\subsection{Matrices in $\CPSD\setminus \CP$}\label{sec:cpsdcpseparation}

In this section  we use  Gram-Lorentz matrices {to present} a new family of matrices in
$\CPSD\setminus \CP$.
{To this end, we make use of the following technical lemma.}

\medskip 
\begin{lemma}\label{lem:Gramdecomp}
Consider vectors $\{p_i\}_{i=1}^n\subseteq \R^d$ and scalars
$\{\lambda_i^j: i\in [n],  j\in[m]\}$  such that $c:=\sum_{i=1}^n
\lambda^j_i p_i$ for all $j\in[m]$. Consider vectors
$\{q_i\}_{i=1}^n\subseteq \R^{d'}$ satisfying  $\la p_i,p_j\ra=\la
q_i,q_j\ra$ for  all $i,j\in [n]$. Then there exists $c'\in \R^{d'}$
such that    ${\|c\|=\|c'\|}$ and moreover  $c'=\sum_{i=1}^n
\lambda^j_i q_i$ for all $j\in [m]$.
\end{lemma}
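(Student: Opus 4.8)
The hypothesis $\la p_i,p_j\ra = \la q_i,q_j\ra$ for all $i,j$ says that the two families $\{p_i\}_{i=1}^n$ and $\{q_i\}_{i=1}^n$ have the same Gram matrix; the standard fact here is that this forces the existence of a linear isometry matching one family to the other \emph{on their spans}. Concretely, the plan is: let $V := \Span(\{p_i\}_{i=1}^n) \subseteq \R^d$ and $W := \Span(\{q_i\}_{i=1}^n) \subseteq \R^{d'}$. I would define a map $\Phi : V \to W$ by $\Phi(p_i) = q_i$ and extend linearly, and then check that $\Phi$ is well-defined and norm-preserving. Well-definedness is the content of the observation that any linear dependence $\sum_i \mu_i p_i = 0$ implies $\sum_i \mu_i q_i = 0$: indeed $\| \sum_i \mu_i q_i\|^2 = \sum_{i,j}\mu_i\mu_j \la q_i,q_j\ra = \sum_{i,j}\mu_i\mu_j\la p_i,p_j\ra = \|\sum_i\mu_i p_i\|^2 = 0$. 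The very same quadratic-form identity shows $\|\Phi(v)\| = \|v\|$ for every $v \in V$.

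With $\Phi$ in hand, the rest is immediate. The vector $c = \sum_{i=1}^n \lambda_i^j p_i$ lies in $V$ (for each $j$, and they all coincide), so I set $c' := \Phi(c) \in W \subseteq \R^{d'}$. Linearity of $\Phi$ gives $c' = \Phi\!\left(\sum_{i=1}^n \lambda_i^j p_i\right) = \sum_{i=1}^n \lambda_i^j \Phi(p_i) = \sum_{i=1}^n \lambda_i^j q_i$ for every $j \in [m]$, which is exactly the required representation; note this also shows the $j$-independence of $\sum_i \lambda_i^j q_i$ is inherited from that of $\sum_i \lambda_i^j p_i$. Finally, since $\Phi$ is an isometry on $V$, $\|c'\| = \|\Phi(c)\| = \|c\|$, giving the norm equality.

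There is essentially no obstacle here — the lemma is a routine repackaging of the fact that a configuration of vectors is determined up to isometry by its Gram matrix. The only point requiring a line of care is checking that $\Phi$ is well-defined (i.e. respects linear dependences among the $p_i$), and that is handled by the single Gram-matrix computation above. I would present the proof in essentially the two paragraphs above, stated for real inner product spaces as in the lemma; if one wanted, $\Phi$ could even be extended to an honest linear isometry $\R^d \to \R^{d'}$ when $d \le d'$, but restricting to $V \to W$ is all that is needed.
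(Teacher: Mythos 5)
Your proof is correct and rests on the same underlying computation as the paper's: the Gram-matrix equality forces $\bigl\|\sum_i \mu_i q_i\bigr\|^2 = \bigl\|\sum_i \mu_i p_i\bigr\|^2$ for every coefficient vector $\mu$, which is exactly what the paper uses (applied to $\mu = \lambda^j - \lambda^{j'}$) to conclude that $c'_j := \sum_i \lambda_i^j q_i$ is independent of $j$ and has the right norm. You package this fact as a well-defined linear isometry $\Phi : \Span(\{p_i\}) \to \Span(\{q_i\})$ and then set $c' := \Phi(c)$; the paper skips the construction of $\Phi$ and verifies the two required identities directly, which is a hair shorter, but the two arguments are the same observation presented at different levels of abstraction.
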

\medskip

\begin{proof}
For  $j\in [m]$ set  $c_j':= \sum_{i=1}^n\lambda^j_i q_i.$  For
$j\ne j' \in[m]$ we have  ${\| c'_j-c'_{j'}\|^2} =0$ and thus
$c'_j=c'_{j'}$. Lastly, set   $c'$ to be this common  value and note
that ${\|c'\|^2 = \|c\|^2}$.
\end{proof}
\vspace{0.1cm}

{We now give} a sufficient condition for constructing matrices in $\DNN\setminus \CP$, generalizing the construction in~\cite{FW}.

\medskip 
\begin{theorem}
\label{thm:necessarycondnotcp}
Consider vectors $\calF:=\cup_{i\in I}\{p_i,p_i'\}$ with the following properties:
\vspace{0.1cm}

\bi
\item[$(i)$] There exists a {nonzero} vector $c$ such that $(p_i+p_i')/2=c,$ for all $i\in I$;
\item[$(ii)$] For all $i\in I$ we have $\la p_i,p'_i\ra=0$;
\item[$(iii)$] There exists $J\subseteq I$ that has odd cardinality and $\sum_{j\in J} p_j=c\cdot |J|$;
\item[$(iv)$] The pairwise inner products of all vectors in $\calF$ are nonnegative.
\ei
\vspace{0.1cm}
Then  we have that $\gram(\mathcal{F})\in \DNN\setminus \CP$.
\end{theorem}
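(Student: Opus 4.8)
The plan is to show that $X := \gram(\calF)$ is doubly nonnegative (which is immediate) and then to derive a contradiction from the assumption that $X \in \CP$, by exploiting properties $(i)$–$(iii)$ to reduce to the classical obstruction coming from an odd cycle. First I would observe that $X$ is positive semidefinite since it is a Gram matrix, and entrywise nonnegative by $(iv)$, hence $X \in \DNN$. For the harder direction, suppose for contradiction that $X \in \CP$, so that there exist nonnegative vectors $\{q_f\}_{f \in \calF} \subseteq \R^{d'}_+$ with $\gram(\{q_f\}) = \gram(\calF)$, i.e., $\langle q_f, q_g\rangle = \langle f, g\rangle$ for all $f,g \in \calF$.

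The key step is to transport the affine relations among the $p_i, p_i'$ to the $q$-side using Lemma~\ref{lem:Gramdecomp}. Property $(i)$ says $p_i + p_i' = 2c$ for all $i \in I$, and property $(iii)$ says $\sum_{j \in J} p_j = |J|\, c$; these are exactly relations of the form $c = \sum \lambda_i p_i$ (with possibly rescaled coefficients), so Lemma~\ref{lem:Gramdecomp} produces a vector $c' \in \R^{d'}$ with $\|c'\| = \|c\|$ satisfying $q_{p_i} + q_{p_i'} = 2c'$ for all $i$ and $\sum_{j\in J} q_{p_j} = |J|\, c'$. Since $\|c'\| = \|c\| \neq 0$ by $(i)$, $c'$ is nonzero. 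Now I would use that the $q_f$ are \emph{nonnegative} vectors together with property $(ii)$: $\langle q_{p_i}, q_{p_i'}\rangle = \langle p_i, p_i'\rangle = 0$, and since both vectors are nonnegative and sum to $2c'$, this forces them to have disjoint supports and $q_{p_i} + q_{p_i'} = 2c'$ coordinatewise with each of $q_{p_i}, q_{p_i'}$ supported on a subset of $\supp(2c')$. In effect, on each coordinate $k$ with $c'_k > 0$, exactly one of $q_{p_i}, q_{p_i'}$ is $2c'_k$ and the other is $0$; writing $S_i \subseteq \supp(c')$ for the set of coordinates where $q_{p_i}$ is ``on'', this gives a two-coloring picture: $q_{p_i}$ restricted to $\supp(c')$ equals $2c'$ on $S_i$ and $0$ off $S_i$, and likewise $q_{p_i'}$ equals $2c'$ on the complement.

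Finally I would extract the contradiction from $(iii)$: summing $\sum_{j \in J} q_{p_j} = |J|\, c'$ coordinatewise on a coordinate $k \in \supp(c')$ gives $\sum_{j\in J} (q_{p_j})_k = |J|\, c'_k$, where each term $(q_{p_j})_k \in \{0, 2c'_k\}$. Thus the number of $j \in J$ with $k \in S_j$ must equal $|J|/2$, which is impossible since $|J|$ is odd by $(iii)$. This is the parity obstruction that makes the argument work, and it is precisely the point where oddness of $|J|$ is used — it is the same mechanism that rules out completely positive matrices supported on odd cycles. I expect the main obstacle to be justifying carefully the coordinatewise ``exactly one is on'' dichotomy: one must argue that nonnegativity of $q_{p_i}, q_{p_i'}$, orthogonality, and $q_{p_i} + q_{p_i'} = 2c'$ together force, on each coordinate in $\supp(c')$, one of the two values to vanish (on coordinates outside $\supp(c')$ both vanish), and that this is consistent across all $i \in I$ relative to the \emph{fixed} vector $c'$. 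Once that structural claim is in place, the parity count closes the proof.
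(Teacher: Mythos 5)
Your proof is correct and follows essentially the same route as the paper's: invoke Lemma~\ref{lem:Gramdecomp} to transport the affine relations $(i)$ and $(iii)$ to the hypothetical $\CP$-factorization (yielding a single common vector $c'$ with $\|c'\|=\|c\|\ne 0$), use nonnegativity together with the orthogonality $(ii)$ to force an on/off structure on each coordinate of $\supp(c')$, and close with the parity contradiction from $|J|$ odd. The only difference is in the intermediate bookkeeping — you read the on/off dichotomy directly from the disjoint-supports consequence of $\langle q_{p_i},q_{p_i'}\rangle=0$ with $q_{p_i},q_{p_i'}\ge0$, whereas the paper defines $b_i := a_i - a$, derives $\|b_i\|=\|a\|$ from orthogonality and $|b_i(k)|\le a(k)$ from nonnegativity, and then squeezes to conclude $b_i=s_i\circ a$ with $s_i\in\{\pm1\}^d$ — but these encode the same structure and the arguments are otherwise identical.
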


\vspace{0.1cm} 

\begin{proof}
By $(iv)$ we have  $\gram(\mathcal{F})\in \DNN$. {For}
 a contradiction, assume   that  $\gram(\mathcal{F})\in~\CP$  and let $ \{a_f\}_{f\in \calF}\subseteq\R^d_+$ be a nonnegative   Gram factorization.

By  Lemma~\ref{lem:Gramdecomp} there exists a  vector $a\in \R^d$ with ${\|a\| = \|c\|}$ satisfying   $(a_{
i}+a'_{i})/2=~
a,$ for all $i\in I$ and $\sum_{j\in J}a_j=\lvert J\rvert a$.  This implies that for all $i\in I$ we have
$a_i-a=a-a'_{i},$ and we call this common value $b_i$. Notice that
\be\label{eq:normsequal}
\|b_i\|^2=\la a_i-a,a-a'_{i}\ra=\|a\|^2,
\ee
where we use $\la a_i, a'_{i}\ra=0$ (this follows from $(ii)$) and the definition of $a$. For all $i\in I$  the vectors  $a\pm b_i$ are  entrywise nonnegative which  implies that $|b_i(k)|\le a(k)$ for all $k\in [d]$ and $i\in I$. This fact combined with \eqref{eq:normsequal} implies that $b_i=s_i\circ a,$ for some  $s_i \in \{\pm1\}^d$. { Substituting $a_j=b_j+a$ in $\sum_{j\in J}a_j=\lvert J\rvert a$ it follows that $\sum_{j\in J }b_j=0,$ which in turn  implies that
$\sum_{j\in J} s_j\circ a=0$.  For  $k\in [d]$ with  $a(k)\ne 0$   we get  $\sum_{j\in J}s_i(k)=0$, a contradiction since $s_i \in \{\pm1\}^d$ and  $|J|$ is odd.} {As $\| a \| = \| c \| > 0$ (since $c \neq 0$ by assumption) there must exist a $k$ such that $a(k) \neq 0$.}
\end{proof}
\vspace{0.1cm}

Using Theorem \ref{thm:necessarycondnotcp} we now give a new  family  of matrices in~$\CPSD
\setminus \CP$.
\medskip
\begin{corollary}\label{cor:cpsdsetminuscp}
Let $n=2\ell,$ where {$\ell \geq 3$} is odd.
For  $0\le k\le n-1$ define the Lorentz cone vectors $p_k:=(1,\cos {2\pi k\over n},\sin {2\pi k\over n})$.
Clearly,  we have that
$$(p_k+p_{k+\ell})/2=(1,0,0), \text{ and }   \la p_k,p_{k+\ell}\ra=0,  \text{ for all }\  0\le k \le \ell -1.$$
Furthermore, we have  that $\la p_k,p_{k'}\ra\ge 0$ for all $0\le k,k'\le n-1$. Lastly,
note that
$$\sum_{k=0}^{\ell-1} p_{2k}=\sum_{k=0}^{\ell-1} p_{2k+1}=\ell \cdot (1,0,0).$$
Since $\ell$ is odd, it follows from   Theorem \ref{thm:necessarycondnotcp} that $X:=\gram(\{p_k\}_{k=0}^{n-1})$ is not completely positive. Moreover, as  $\{p_k\}_{k=0}^{n-1}\subseteq \calL_3$  it follows that $X\in \CL^n {\setminus \CP^n}$. In particular we have that $X\in \CPSD^n {\setminus \CP^n}
$.
\end{corollary}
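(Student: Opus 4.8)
The plan is to verify that the vectors $\{p_k\}_{k=0}^{n-1}$ satisfy the four hypotheses of Theorem~\ref{thm:necessarycondnotcp}, with the index set $I=\{0,1,\dots,\ell-1\}$ and the pairing $i\leftrightarrow i+\ell$, so that the conclusion $\gram(\calF)\in\DNN\setminus\CP$ applies; then I would upgrade the conclusion to $X\in\CL^n\setminus\CP^n\subseteq\CPSD^n\setminus\CP^n$ using the fact (Theorem~\ref{thm:isometry}) that Gram matrices of Lorentz cone vectors are cpsd. Concretely, set $\calF=\{p_k:0\le k\le n-1\}$, writing $p_i'=p_{i+\ell}$ for $i\in I$, and take $c=(1,0,0)$.

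First I would check $(i)$ and $(ii)$. Since $\cos\tfrac{2\pi(k+\ell)}{n}=\cos(\tfrac{2\pi k}{n}+\pi)=-\cos\tfrac{2\pi k}{n}$ and similarly for sine, we get $(p_k+p_{k+\ell})/2=(1,0,0)=c$, which is $(i)$; and $\la p_k,p_{k+\ell}\ra = 1 - \cos^2\tfrac{2\pi k}{n} - \sin^2\tfrac{2\pi k}{n} = 0$, which is $(ii)$. Next, for $(iv)$ (nonnegativity of all pairwise inner products), note $\la p_k,p_{k'}\ra = 1+\cos\tfrac{2\pi k}{n}\cos\tfrac{2\pi k'}{n}+\sin\tfrac{2\pi k}{n}\sin\tfrac{2\pi k'}{n} = 1+\cos\tfrac{2\pi(k-k')}{n}\ge 0$, so $(iv)$ holds; this also shows each $p_k\in\calL_3$ since $\la p_k,p_k\ra=2$ while the first coordinate is $1$, i.e. $1\ge\sqrt{\cos^2+\sin^2}=1$. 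For $(iii)$, take $J=\{0,2,4,\dots,2\ell-2\}=\{2k:0\le k\le\ell-1\}$, which has cardinality $\ell$, odd by hypothesis. The sum $\sum_{k=0}^{\ell-1}p_{2k}$ has first coordinate $\ell$, and its remaining two coordinates are $\sum_{k=0}^{\ell-1}\cos\tfrac{4\pi k}{n}$ and $\sum_{k=0}^{\ell-1}\sin\tfrac{4\pi k}{n}$; since $\tfrac{4\pi}{n}=\tfrac{2\pi}{\ell}$, these are $\mathrm{Re}$ and $\mathrm{Im}$ of $\sum_{k=0}^{\ell-1}e^{2\pi i k/\ell}=0$. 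Hence $\sum_{j\in J}p_j=\ell\cdot(1,0,0)=c\cdot|J|$, giving $(iii)$. (The identity $\sum_{k=0}^{\ell-1}p_{2k+1}=\ell\cdot(1,0,0)$ stated in the corollary is not strictly needed, but follows the same way.)

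With all four hypotheses verified, Theorem~\ref{thm:necessarycondnotcp} yields $X=\gram(\{p_k\}_{k=0}^{n-1})\notin\CP^n$. Since each $p_k\in\calL_3$, by definition $X\in\CL^n$, and then Theorem~\ref{thm:isometry} (or its consequence Theorem~\ref{upperboundcpsd}) gives $X\in\CPSD^n$. Combining, $X\in\CPSD^n\setminus\CP^n$, as claimed. I do not foresee a genuine obstacle here: the work is entirely in the trigonometric bookkeeping for $(iii)$, where the crucial point is that summing $p_{2k}$ over $k=0,\dots,\ell-1$ traces out the $\ell$-th roots of unity in the last two coordinates and therefore cancels — this is exactly where oddness of $\ell$ will later force the contradiction inside Theorem~\ref{thm:necessarycondnotcp}, but for the purposes of applying that theorem we only need the displayed sum identity to hold.
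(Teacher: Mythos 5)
Your overall argument follows the paper's route exactly: verify conditions (i), (ii), (iv) and the displayed sum identity via trigonometric bookkeeping, invoke Theorem~\ref{thm:necessarycondnotcp} to get $X\notin\CP^n$, then use $\{p_k\}_{k=0}^{n-1}\subseteq\calL_3$ together with Theorem~\ref{thm:isometry} to place $X$ in $\CL^n\subseteq\CPSD^n$. The trigonometric computations, including the roots-of-unity cancellation behind condition (iii), are all correct.

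The one gap is in the literal application of Theorem~\ref{thm:necessarycondnotcp}. Its hypothesis (iii) requires $J\subseteq I$, but with your pairing $p_i'=p_{i+\ell}$ and $I=\{0,\dots,\ell-1\}$, the set $J=\{0,2,\dots,2\ell-2\}$ is \emph{not} contained in $I$: since $\ell$ is odd, $J$ contains for instance $\ell+1>\ell-1$. Consequently $\sum_{j\in J}p_j$ mixes unprimed vectors ($j<\ell$) with primed ones (for $j\ge\ell$, the vector $p_j$ is $p_{j-\ell}'$ in your indexing), which the hypothesis as stated does not cover. The cleanest repair is to re-pair: set $q_i:=p_{2i}$ and $q_i':=p_{(2i+\ell)\bmod n}$ for $i\in I=\{0,\dots,\ell-1\}$. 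Since $\ell$ is odd, the $q_i$ enumerate the even-indexed $p_k$'s and the $q_i'$ the odd-indexed ones, so $\calF=\{p_k\}_{k=0}^{n-1}$, and now $J:=I$ satisfies both $|J|=\ell$ odd and $\sum_{i\in I}q_i=\ell c$, exactly hypothesis (iii). (Alternatively, one can observe that the proof of Theorem~\ref{thm:necessarycondnotcp} tolerates a $J$ indexing any odd-size subcollection of $\calF$: its last step uses only that each $a_f-a$ equals $\pm s_i\circ a$ with $\pm s_i$ a sign vector, and the oddness contradiction is insensitive to these signs.) The paper's own text for this corollary is written with the same looseness, so you are in good company, but one of these two fixes should be made explicit when spelling out the application.
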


\subsection{Gram-Lorentz behaviors}\label{sec:unbiasedarelorentz}

In view of Theorem \ref{thm:conicformulation}, to any set $\calK\subseteq
\CPSD$ we can associate a family  of quantum   behaviors which we denote by $\calQ_{\calK}$.
We refer to  the quantum behaviors $\calQ_{\CL}$
corresponding to  $\calK=\CL$ as   {\em Gram-Lorentz behaviors}.

As it turns out   Gram-Lorentz behaviors are quite interesting from a physical point of view. First of all, by Theorem \ref{upperboundcpsd} it follows that we can place an upper bound on the size of a quantum system necessary to generate all Gram-Lorentz behaviors, i.e.,
\be\label{eq:gramloretzwhatever}
\max\{ \mathcal{D}(\pbf): \pbf\in \calQ_{\CL}\}<+\infty.
\ee

Note that \eqref{eq:gramloretzwhatever} is in stark contrast to the case of arbitrary quantum behaviors, where no finite  bound is currently known (recall Proposition \ref{eq:proposition} and the discussion preceding it).  In fact, as was already mentioned in the introduction,  the only  quantum behaviors for which we can a piori bound the size of a quantum system necessary to generate them  are the unbiased  behaviors corresponding to a Bell scenario with  binary outcomes \cite{TS87}.   In fact, we can recover this by combining   \eqref{eq:gramloretzwhatever} with  the following result.

\medskip
\begin{theorem}\label{thm:unbiasedquantumislorentz}
In any $(m_A,m_B,2,2)$-scenario, all unbiased quantum behaviors  are Gram-Lorentz behaviors.
\end{theorem}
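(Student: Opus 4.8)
The plan is to start from a quantum representation of an unbiased behavior $\pbf$ in an $(m_A, m_B, 2, 2)$-scenario and manufacture Lorentz-cone vectors whose Gram matrix is a valid matrix $R$ satisfying \eqref{eq:cond1}--\eqref{eq:cond4}. First I would invoke Tsirelson's characterization (Theorem \ref{thm:cliffordalgebramain} is the relevant ingredient): since the outcomes are labelled $\{\pm 1\}$, I pass from the behavior $\pbf$ to its correlation matrix $C = (C_{xy})$ with $C_{xy} = \sum_{a,b} ab\, p(ab|xy)$. Because $\pbf$ is unbiased, the behavior is completely determined by $C$ together with the trivial marginals $p_A(a|x) = p_B(b|y) = 1/2$; explicitly $p(ab|xy) = \tfrac14(1 + ab\, C_{xy})$. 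The correlation matrix of a quantum behavior in a binary-outcome scenario is exactly one arising from $\pm 1$ observables, so there exist unit vectors $\{u_x\}_{x\in[m_A]}$ and $\{v_y\}_{y\in[m_B]}$ in some $\R^k$ with $C_{xy} = \langle u_x, v_y\rangle$; this is the real heart of Tsirelson's theorem and the one nontrivial fact I would lean on.

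Next I would build the Lorentz vectors. The matrix $R$ we must produce is indexed by $([m_A]\times\{\pm1\})\cup([m_B]\times\{\pm1\})$, so there are $n = 2m_A + 2m_B$ index pairs, and the natural guess is
\[
\ell_{x,a} = \tfrac12\bigl(1,\ a\, u_x\bigr) \in \R^{1+k}, \qquad
\ell_{y,b} = \tfrac12\bigl(1,\ b\, v_y\bigr) \in \R^{1+k},
\]
for $a,b\in\{\pm1\}$. Each of these lies in $\calL_{1+k}$ since the first coordinate $1/2$ dominates the norm $\tfrac12\|u_x\| = \tfrac12$ of the tail (with equality, so these are actually boundary vectors). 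I would then compute the Gram matrix $R := \gram(\mathcal F)$ of this family and check the four conditions of Theorem \ref{thm:conicformulation} one at a time. The cross block gives $\langle \ell_{x,a}, \ell_{y,b}\rangle = \tfrac14(1 + ab\,\langle u_x, v_y\rangle) = \tfrac14(1 + ab\, C_{xy}) = p(ab|xy)$, which is \eqref{eq:cond4}. Within Alice's block, $\sum_{a,a'}\langle \ell_{x,a}, \ell_{x',a'}\rangle = \sum_{a,a'}\tfrac14(1 + aa'\langle u_x, u_{x'}\rangle) = 1$ because $\sum_{a,a'} aa' = 0$; this is \eqref{eq:cond1}, and \eqref{eq:cond3} is symmetric. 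Finally $\sum_{a,b}\langle \ell_{x,a}, \ell_{y,b}\rangle = \sum_{a,b} p(ab|xy) = 1$ gives \eqref{eq:cond2} (this also follows from the $\sum aa' = 0$ computation once $C_{xy} = \langle u_x,v_y\rangle$ is in hand). Hence $R \in \CL^n \cap \mathcal A(\pbf)$, so $\pbf \in \calQ_{\CL}$.

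The step I expect to be the main obstacle is the passage from an unbiased quantum \emph{behavior} to the inner-product representation $C_{xy} = \langle u_x, v_y\rangle$ of its correlation matrix: one must be careful that the quantum state need not be maximally entangled and the observables act on different-dimensional spaces, so getting the clean Gram representation requires exactly Tsirelson's construction (dilating to $\pm1$ observables and using a maximally entangled state on a doubled space, cf.\ \eqref{eq:maxent}). Everything after that — checking the four linear conditions and the Lorentz-membership of the $\ell$'s — is a short direct computation using only $\sum_{a\in\{\pm1\}} a = 0$, $\sum_{a\in\{\pm1\}} a^2 = 2$, and $\|u_x\| = \|v_y\| = 1$. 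I would present the behavior-to-correlation dictionary as a cited consequence of Section \ref{sec:tsirelson} (to be written there) rather than reproving it in place.
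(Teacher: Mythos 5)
Your proposal is correct and follows the same route as the paper: the paper proves this via Lemma~\ref{cddfsvdfv}, which uses Tsirelson's theorem to obtain unit vectors with $c_{xy}=\langle u_x,v_y\rangle$ and then produces exactly the Lorentz vectors $\tfrac12(1,au_x)$, $\tfrac12(1,bv_y)$ you wrote down. The only cosmetic difference is that the paper detours through the Clifford map $\gamma$ and the maximally entangled state to exhibit the CPSD factors explicitly, whereas you verify the Gram/affine conditions by direct computation, which is cleaner for this particular statement.
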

\medskip

The proof of Theorem \ref{thm:unbiasedquantumislorentz} is deferred to Section \ref{sec:corrtobeha} (cf. Remark \ref{rem:unbiasedgramlorentz}).

A second  interesting fact   is that there exist Gram-Lorentz behaviors for which any quantum representation has  size  exponential in $m_A$ and $m_B$. Specifically, our main result in Section \ref{sec:glbehaviorsstuff} (cf. Theorem \ref{thm:GLbehaviorlowerbound}) is that for any  $n\ge 1$ there exists a Gram-Lorentz behavior $\pbf_n$ corresponding to the $(n,n,2,2)$-scenario satisfying~$\mathcal{D}(\pbf_n)\ge 2^{\Omega(\sqrt{n})}.$

As an immediate  consequence of this fact  we  get that no finite dimension suffices to generate all    behaviors  in $\cup_{n\ge 1} \calQ(n,n,2,2)$.  This  was   the main result  in~\cite{VP09}.

Lastly, the existence  of  Gram-Lorentz  behaviors  for which every quantum representation has exponential size is {our} crucial step for constructing Gram-Lorentz matrices whose $\CPSDR$ is exponential in terms of their size (cf. Section~\ref{sec:highcpsdrank}).

\section{$\CPSD$ matrices whose cpsd-rank is exponential in terms of their size}\label{sec:explowerbounds}

This  section is dedicated to the proof of Result \ref{res:first}, i.e., we show that for   any $n\ge 1$ there  exists a matrix   $X_n\in \CL^{2n}$ such that
$ \CPSDR(X_n)\ge  2^{\Omega(\sqrt{n})}.$ The proof is given  in Section~\ref{sec:highcpsdrank} (cf.  Theorem \ref{thm:lboundgl}) {and} relies on Theorem \ref{thm:minsizequantumcorrelation}.
 Specifically,  given a quantum behavior $\pbf \in \calQ$ it follows   by Theorem \ref{thm:minsizequantumcorrelation}  that $\CPSDR(R)\ge~\mathcal{D}(\pbf),$ for any $  R\in \CPSD^n\cap \mathcal{A}(\pbf)$.
 Consequently, in order to derive   Result \ref{res:first}  it suffices to identify a sequence  of  Gram-Lorentz  behaviors $(\pbf_n)_{n\in \mathbb{N}}$  for which  all quantum representations require a quantum system of size exponential in~$n$.    We show that for every $n\ge 1$ there exists a Gram-Lorentz behavior $\pbf_n$ corresponding to the $(n,n,2,2)$-scenario with the property that  $\mathcal{D}(\pbf_n)\ge 2^{\Omega(\sqrt{n})}.$  This is the main step for showing  Result~\ref{res:first} and its proof is given in
 Section \ref{sec:glbehaviorsstuff} (cf. Theorem \ref{thm:GLbehaviorlowerbound}).  To prove this, instead of working with quantum behaviors we take the equivalent viewpoint of quantum correlations.  This allows us to use  a lower bound  on the size of matrix  representations of extremal quantum correlations, which is implicit in~\cite{TS87}. This is explained in Section~\ref{sec:tsirelson} and Section \ref{sec:dsvefwefwe}   in the~Appendix.
In Section~\ref{sec:corrtobeha}   we show  that to any extremal
quantum correlation $C$  we can associate a Gram-Lorentz behavior
$\pbf_C$  with the property that  $\mathcal{D}(\pbf_C)\ge
2^{\Omega(\rank(C))}$. In Section \ref{sec:extremepoints} we focus
on the  {$(n,n,2,2)$-scenario} and show that any extreme point of
the $n$-dimensional elliptope $\calE_n$  is also an extreme point of
the corresponding set of quantum correlations.
  It is well-known that for any $n\ge1 $ there exist extreme points of $\calE_n$ with rank $\Theta(\sqrt{n})$. Thus,   quantum behaviors corresponding to high-rank extreme points of the elliptope  have the required properties.   Furthermore, in Section \ref{sec:puttingeverythingtogether} we give an explicit family  of Gram-Lorentz behaviors achieving the exponential lower bound. We conclude the proof of Result \ref{res:first} in Section~\ref{sec:highcpsdrank} and give an explicit family of matrices with exponentially large~cpsd-rank.}

\subsection{Quantum  correlations}\label{sec:tsirelson}

Throughout this section, for notational convenience we
set $n:=m_A$ and $m:=m_B$. Furthermore, we   focus on the $(n,m,2,2)$-scenario and we assume that the measurement outcomes are  given by  $\{\pm 1\}$. We denote by $\calQ$ the corresponding set of quantum behaviors.

{For the reader's convenience  we have collected   in this
subsection some facts we use in later parts of this work. These
results are  well-known in the quantum information community but
much less so {in} the mathematical optimization community. }

We first describe a well-known equivalent  parametrization of the set of quantum behaviors, which is the  appropriate  language for stating
 Tsirelson's theorem (cf. Theorem \ref{thm:tsirelson1}). To describe  this we use the map  $f :\R^{4nm}  \rightarrow  \R^{n+m+nm},$ which maps the  behavior $\pbf=\left(p(ab|xy)\right)$  to the vector  $\mathbf{c}=\left( c_x, c_y, c_{xy}\right)$ where
  \be\label{eq:expectations}
   c_x:=\sum_{a\in \{\pm 1\}}a\ p_A(a|x), \  c_y:=\sum_{b\in \{\pm 1\}} b\  p_B(b|y), \text{ and }  c_{xy}:=\sum_{a,b\in \{\pm 1\}}ab\ p(ab|xy).
\ee

Note that $c_{xy}$ corresponds to the expected value  of the product of the players' outcomes, given that they performed measurements $x$ and $y$, respectively.  Similarly, $c_x$  and  $c_y$ correspond to the  expected values of the player's individual outcomes.

The map $f$  is   linear and injective. Consequently,
 the set $\calQ$  of quantum behaviors  is  in one-to-one correspondence with
$f(\calQ)$, i.e.,  the image of $\calQ$    via the map~$f$. We refer to $f(\calQ)$  as the set  of {\em full quantum  correlations}. A  full quantum correlation $ \left( c_x, c_y, c_{xy}\right)$ is called {\em unbiased} if $c_x=c_y=0$, for all $x,y$. Lastly, note that
the inverse of $f$ is the map $g:  \R^{n+m+nm} \rightarrow \R^{4nm}, $ which maps a full quantum correlation  $\mathbf{c}=\left( c_x, c_y, c_{xy}\right)$   to  the {behavior} $\pbf:=g(\cbf)$  defined as
\be\label{eq:expectationstovectors}
p(ab|xy)={1 + a \, c_x + b \, c_y + ab \, c_{xy} \over 4}.
\ee

{The following  lemma gives a characterization  of the set of full quantum correlations.  We have included a  proof for completeness, which we also use in  Remark~\ref{rem:equivalencebetweenrepresentations}.}

\medskip 
\begin{lemma}\label{lem:qrealizationcorr}
The vector  $\mathbf{c}=\left( c_x, c_y, c_{xy}\right)\in [-1,1]^{n+m+nm}$ is a full quantum correlation
if and only if  there exist Hermitian  operators
 $\{M_x\}_{x}, \{N_y\}_{y}$ with eigenvalues in $[-1,1]$ and a quantum state $\rho$ such that, {for all x, y, we have}
\be
\label{eq:qrealization}
c_x=\tr((M_x\otimes I)\rho), \, c_y=\tr((I\otimes N_y)\rho), \,  \text{ and } \, c_{xy}=\tr((M_x\otimes N_y)\rho).
\ee
\end{lemma}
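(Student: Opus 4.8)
The plan is to prove both directions of the equivalence, treating the ``if'' direction as essentially a calculation and the ``only if'' direction as the place where the real content lies.

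\medskip

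For the ``if'' direction, suppose we are given Hermitian operators $\{M_x\}_x$ and $\{N_y\}_y$ with spectra in $[-1,1]$ and a quantum state $\rho$ satisfying \eqref{eq:qrealization}. I would define measurement operators by the standard binary dilation: for each $x$ set $M_{+1|x} := (I+M_x)/2$ and $M_{-1|x} := (I-M_x)/2$, and analogously $N_{\pm 1|y} := (I \pm N_y)/2$. Since the eigenvalues of $M_x$ lie in $[-1,1]$, both $M_{\pm 1|x}$ are Hermitian psd, and clearly $M_{+1|x}+M_{-1|x} = I$, so these form valid POVMs; likewise for $N$. Define $p(ab|xy) := \tr((M_{a|x}\otimes N_{b|y})\rho)$; this is a quantum behavior by definition. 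A direct computation then shows that its image under $f$ (see \eqref{eq:expectations}) is exactly $\mathbf{c}$: the marginal expectations are $\sum_a a\,\tr((M_{a|x}\otimes I)\rho) = \tr(((M_{+1|x}-M_{-1|x})\otimes I)\rho) = \tr((M_x\otimes I)\rho) = c_x$, and similarly $\sum_{a,b} ab\, p(ab|xy) = \tr((M_x \otimes N_y)\rho) = c_{xy}$. Finally, since each $c_x, c_y, c_{xy}$ is a trace of a product of operators with norm at most $1$ against a density matrix, they all lie in $[-1,1]$, so $\mathbf{c}$ is indeed a full quantum correlation.

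\medskip

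For the ``only if'' direction, suppose $\mathbf{c} = (c_x, c_y, c_{xy})$ is a full quantum correlation, i.e.\ $\mathbf{c} = f(\pbf)$ for some quantum behavior $\pbf = (p(ab|xy))$. By definition of $\calQ$, there is a quantum state $\rho$ acting on $\C^d\otimes\C^d$ and POVMs $\{M_{a|x}\}_a$, $\{N_{b|y}\}_b$ (with $a,b\in\{\pm 1\}$) such that $p(ab|xy) = \tr((M_{a|x}\otimes N_{b|y})\rho)$. Now set $M_x := M_{+1|x} - M_{-1|x}$ and $N_y := N_{+1|y} - N_{-1|y}$. Each $M_x$ is Hermitian (difference of two Hermitian operators), and since $0 \preceq M_{\pm 1|x} \preceq M_{+1|x}+M_{-1|x} = I$, we have $-I \preceq M_x \preceq I$, so the eigenvalues of $M_x$ lie in $[-1,1]$; the same holds for $N_y$. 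Then $\tr((M_x\otimes I)\rho) = \sum_{a} a\,\tr((M_{a|x}\otimes I)\rho) = \sum_a a\, p_A(a|x) = c_x$ using the definition \eqref{eq:expectations} and that $\sum_b N_{b|y} = I$ makes the marginal well-defined; similarly $\tr((I\otimes N_y)\rho) = c_y$ and $\tr((M_x\otimes N_y)\rho) = \sum_{a,b} ab\,\tr((M_{a|x}\otimes N_{b|y})\rho) = \sum_{a,b} ab\, p(ab|xy) = c_{xy}$. This gives \eqref{eq:qrealization}.

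\medskip

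I do not expect a genuine obstacle here: the statement is a dictionary translation between the POVM/behavior language and the $\pm1$-observable/correlation language, and the only thing to be careful about is bookkeeping — namely that the binary-outcome POVM $\{M_{+1|x}, M_{-1|x}\}$ corresponds bijectively to the $\pm1$ observable $M_x = M_{+1|x}-M_{-1|x}$ via $M_{\pm1|x} = (I\pm M_x)/2$, and that this correspondence is exactly what makes the marginal and correlation formulas \eqref{eq:expectations} match the traces in \eqref{eq:qrealization}. The one subtlety worth stating explicitly, for use in the later remark the authors reference, is that this correspondence is an \emph{equivalence} of representations: every $\pm1$-observable representation of $\mathbf{c}$ arises from a binary POVM representation of $\pbf = g(\mathbf{c})$ and vice versa, with the same state $\rho$ and the same dimension $d$. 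This is what will later let one pass freely between $\mathcal{D}(\pbf)$ and the minimal dimension of an observable representation.
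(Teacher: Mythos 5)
Your proof is correct and follows essentially the same argument as the paper: in both directions you use the standard bijection $M_{\pm 1|x} = (I \pm M_x)/2 \leftrightarrow M_x = M_{+1|x} - M_{-1|x}$ to translate between POVMs and $\pm 1$-observables, keeping the state fixed. The closing observation that this correspondence preserves dimension is precisely what the paper records afterward in Remark~\ref{rem:equivalencebetweenrepresentations}.
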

\medskip 

\begin{proof}
Consider $\pbf\in \calQ$ such that $f(\pbf)=\cbf$ and let $\left\{ \{
M_{a|x}\}_a, \{ N_{b|y}\}_b, \rho\right\}$ be a quantum representation  for $\pbf$.
For $x\in [n]$  set $M_x:=M_{1|x}-M_{-1|x}$ and for $y\in[m]$ set $N_y:=N_{1|y}-N_{-1|y}$. Since $I=M_{1|x}+M_{-1|x}=N_{1|y}+N_{-1|y},$ for all $x,y$ it follows that $M_x$ and $N_y$ have eigenvalues in $[-1,1]$. Lastly, using  \eqref{eq:expectations}, an easy calculation shows that \eqref{eq:qrealization} is satisfied.

Conversely,  let $\{M_x\}_{x}, \{N_y\}_{y}$ be Hermitian operators with eigenvalues in $[-1,1]$
 and $\rho$  a quantum state satisfying \eqref{eq:qrealization}. For $x\in [n]$ and $a\in \{\pm 1\}$ set $M_{a|x}={I+aM_x\over 2}$ and similarly,  for $y\in [m]$ and $b\in \{\pm 1\}$ set $N_{b|y}={I+bN_y\over 2}$. Note that $\{M_{a|x}\}_{a}$ and $\{N_{b|y}\}_b$ are valid POVMs.
 Lastly, defining  the quantum behavior $\pbf $ where  $p(ab|xy)=\tr((M_{a|x}\otimes N_{b|y})\rho),$ it follows {that  $\mathbf{c}=f(\pbf)$ and is thus a full quantum correlation}.
\end{proof}
\medskip

Given a full quantum correlation $  \cbf=\left( c_x, c_y, c_{xy}\right)$ we  refer to any   ensemble of Hermitian operators  $\left\{ \{M_x\}_{x}, \{N_y\}_{y}, \rho\right\}$  as defined in Lemma \ref{lem:qrealizationcorr} as a {\em quantum representation} of $\cbf$. We say that a  quantum  representation  of $\cbf$ is {\em $d$-dimensional} if $\{M_x\}_x,\{N_y\}_y {\subseteq \calh^d}$ and {$\rho \in \calh_+^{d^2}$}.

\medskip 
\begin{remark}\label{rem:equivalencebetweenrepresentations}
{From the proof of Lemma~\ref{lem:qrealizationcorr}, we see we have that $\pbf\in \calQ$ has a $d$-dimensional quantum representation (as a bevavior) if and only if $f(\pbf)$ has a $d$-dimensional quantum representation (as a full quantum correlation).}
\end{remark}
\medskip

We denote by $\cornm$  the   projection of the set of full quantum
correlations onto $\R^{nm}$, {that is, {we only keep}  the entries
$(c_{xy})_{xy}$,} and refer to its elements as  {\em quantum
correlations}. It is  sometimes  useful   to arrange the entries of
a quantum correlation $\cbf \in \cornm$ as a {matrix $C$} in
$[-1,1]^{n\times m}$, in which case we write $C\in \cornm$.
Throughout this section  we use these two  forms interchangeably.

Tsirelson's theorem~\cite{TS87} given below  has two important   consequences: First, it  characterizes the set  of quantum correlations as the feasible region of a semidefinite  program (cf. condition $(iii)$). {Second},  condition $(ii)$ implies
that all unbiased quantum behaviors can be generated using quantum systems of finite dimension.

\medskip 

\begin{theorem}[\cite{TS87}]\label{thm:tsirelson1} For any  $C=(c_{xy})\in [-1,1]^{n\times m}$ the  following are equivalent:
\vspace{0.01cm}
\bi

\item[$(i)$] {$C$ is a quantum correlation, i.e.,} there exist Hermitian operators
 $\{M_x\}_{x}$, $\{N_y\}_{y}$ with eigenvalues in $[-1,1]$ and  a quantum state $\rho$~satisfying
$$c_{xy}=\tr((M_x\otimes N_y)\rho), \; \text{ for all } \; x\in [n], y\in [m].$$

\item[$(ii)$] There exist unit vectors $\{u_x\}_x$ and $\{v_y\}_{y}$ in $\R^{n+m}$ such that
\vspace{0.1cm}

\bi \item[$(a)$] $c_{xy}=\Psi_d^*(\gamma(u_x)\otimes
\gamma(v_y)^\top)\Psi_d,\ $ for all  $x\in [n],y\in [m]$;
\vspace{0.1cm}
\item[$(b)$]  $\Psi_d^*( \gamma(u_x)\otimes I)\Psi_d = 0,\ $ for all  $x\in [n]$;
\vspace{0.1cm}
\item[$(c)$] $\Psi_d^*(I \otimes \gamma(v_y)^\top)\Psi_d = 0,\ $ for all  $y\in
[m]$, \ei where $d:=2^{\left\lfloor \frac{n+m}{2}\right\rfloor}$, $\Psi_d$ is the $d$-dimensional maximally
entangled state defined in \eqref{eq:maxentangled}  and 
the map $\gamma$ is defined in Theorem \ref{thm:cliffordalgebramain}.
 \vspace{0.1cm}
\item[$(iii)$] There exist unit vectors $\{u_x\}_x$ and $\{v_y\}_{y}$ in $\R^{n+m}$ such that
$$c_{xy}=\la u_x,v_y\ra, \; \text{ for all } \; x\in [n], y\in [m].$$
\ei
\end{theorem}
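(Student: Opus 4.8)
# Proof Proposal for Tsirelson's Theorem (Theorem \ref{thm:tsirelson1})

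The plan is to prove the cycle of implications $(i) \Rightarrow (ii) \Rightarrow (iii) \Rightarrow (i)$, which establishes the equivalence. The implication $(iii) \Rightarrow (i)$ is the easiest: given unit vectors $\{u_x\}_x, \{v_y\}_y$ in $\R^{n+m}$ with $c_{xy} = \langle u_x, v_y \rangle$, I would invoke Theorem \ref{thm:cliffordalgebramain} to define $M_x := \gamma(u_x)$ and $N_y := \gamma(v_y)^\top$, which are Hermitian with $\pm 1$ eigenvalues (hence eigenvalues in $[-1,1]$) by part $(ii)$ of that theorem, and take $\rho := \Psi_d \Psi_d^*$ for $d = 2^{\lfloor (n+m)/2 \rfloor}$. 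Then \eqref{eq:maxent} gives $\tr((M_x \otimes N_y)\rho) = \Psi_d^*(\gamma(u_x) \otimes \gamma(v_y)^\top)\Psi_d = \frac{1}{d}\tr(\gamma(u_x)\gamma(v_y)) = \langle u_x, v_y \rangle = c_{xy}$ by part $(iii)$ of Theorem \ref{thm:cliffordalgebramain}, so $(i)$ holds.

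For $(ii) \Rightarrow (iii)$, I would simply observe that conditions $(a)$ together with \eqref{eq:maxent} and Theorem \ref{thm:cliffordalgebramain}$(iii)$ give $c_{xy} = \Psi_d^*(\gamma(u_x)\otimes\gamma(v_y)^\top)\Psi_d = \frac{1}{d}\tr(\gamma(u_x)\gamma(v_y)) = \langle u_x, v_y \rangle$; the conditions $(b)$ and $(c)$ are not needed for this direction (they are the extra unbiasedness bookkeeping that makes the representation genuinely useful, but $(iii)$ follows from $(a)$ alone).

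The main work is $(i) \Rightarrow (ii)$, and this is where I expect the central obstacle to lie. Starting from Hermitian operators $\{M_x\}_x, \{N_y\}_y$ with eigenvalues in $[-1,1]$ and a state $\rho$ realizing $c_{xy} = \tr((M_x \otimes N_y)\rho)$, the classical argument of Tsirelson proceeds as follows. First, one reduces to the case where the $M_x$ and $N_y$ are genuine $\pm 1$ observables: if $M$ has eigenvalues in $[-1,1]$, one can dilate it to a $\pm 1$ observable on a larger space (e.g. $\begin{pmatrix} M & \sqrt{I - M^2} \\ \sqrt{I - M^2} & -M \end{pmatrix}$), adjusting $\rho$ accordingly, without changing the correlations $c_{xy}$. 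Second, using the anticommutation structure, one shows that the real span of $\{M_x\}_x$ (resp. $\{N_y\}_y$) can be replaced by operators of the form $\gamma(u_x)$ for unit vectors $u_x$: the key algebraic fact is that for $\pm 1$ observables $A, B$ one has $\tr((A \otimes B)\rho)$ depending, in the relevant symmetrized sense, only on anticommutator data, and Theorem \ref{thm:cliffordalgebramain}'s Clifford representation is universal among such systems. Concretely, one forms the Gram-type matrix of the operators $M_x$ against $N_y$ under the state $\rho$, recognizes it as realizable by a maximally entangled state over a Clifford representation via \eqref{eq:maxent}, and extracts the vectors $u_x, v_y$. The hard part will be making the passage from an arbitrary state $\rho$ to the canonical maximally entangled state $\Psi_d$ rigorous — this requires the standard purification/Schmidt-decomposition argument combined with the observation that one may symmetrize Alice's and Bob's Hilbert spaces; one also needs to handle conditions $(b)$ and $(c)$, which may require passing to a direct sum with a "mirror" representation (replacing $\gamma(u_x)$ by $\gamma(u_x) \oplus (-\gamma(u_x))$, i.e. enlarging the ambient dimension) to kill the single-party expectations. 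I would cite \cite{TS87} for the detailed construction and present the streamlined version of the argument, emphasizing the role of \eqref{eq:maxent} and Theorem \ref{thm:cliffordalgebramain} as the two engines that make everything work.
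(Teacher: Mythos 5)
The paper does not prove Theorem~\ref{thm:tsirelson1}; it is imported verbatim from Tsirelson~\cite{TS87} and used as a black box, so there is no in-paper argument to compare against. Judged on its own terms, your sketch of the two easy directions is correct, and your deferral of the genuinely hard content to~\cite{TS87} matches the paper's own treatment. Your $(iii)\Rightarrow(i)$ step is exactly right: set $M_x:=\gamma(u_x)$, $N_y:=\gamma(v_y)^{\top}$, $\rho:=\Psi_d\Psi_d^{*}$, and then \eqref{eq:maxent} together with Theorem~\ref{thm:cliffordalgebramain}$(iii)$ gives $c_{xy}$, while Theorem~\ref{thm:cliffordalgebramain}$(ii)$ makes the observables $\pm 1$-valued. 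Likewise $(ii)\Rightarrow(iii)$ is an immediate consequence of $(a)$ via \eqref{eq:maxent}.

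Where I would push back is on the choice of cycle. By taking $(i)\Rightarrow(ii)\Rightarrow(iii)\Rightarrow(i)$ you have made $(i)\Rightarrow(ii)$ the sole hard direction, and it is strictly harder than it needs to be: you must produce not just a Gram system of vectors but a canonical Clifford--maximally-entangled presentation together with the unbiasedness conditions $(b)$ and $(c)$, which is why you find yourself reaching for the ad hoc mirror trick $\gamma(u_x)\oplus(-\gamma(u_x))$. The cleaner decomposition concentrates all the difficulty in $(i)\Rightarrow(iii)$: you only have to produce Gram vectors, nothing more. The other three implications are then all cheap. Indeed $(iii)\Rightarrow(ii)$ uses precisely the vectors you already have, condition $(a)$ follows as you computed, and conditions $(b)$ and $(c)$ hold automatically because $\tr(\gamma(x))=0$ for every $x$ (Theorem~\ref{thm:cliffordalgebramain}$(i)$), so $\Psi_d^{*}(\gamma(u_x)\otimes I)\Psi_d=\tfrac{1}{d}\tr(\gamma(u_x))=0$ and similarly for $(c)$; no enlargement of the representation is needed. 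Then $(ii)\Rightarrow(i)$ is immediate since $\Psi_d\Psi_d^{*}$ is a legitimate state and $\gamma(u_x),\gamma(v_y)^{\top}$ are Hermitian with $\pm 1$ eigenvalues, and $(ii)\Rightarrow(iii)$ is the calculation you already gave. With this organisation the mirror construction disappears and the citation to~\cite{TS87} is confined to a single implication, which is also the one whose proof the present paper itself does not reproduce.

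One further remark on the direction you would still defer: your sketch of $(i)\Rightarrow(iii)$ correctly identifies the dilation-to-$\pm 1$-observables step and the role of the Schmidt decomposition of the purified state, but note that the Clifford machinery is not actually required for $(i)\Rightarrow(iii)$; one can extract the vectors $u_x$, $v_y$ directly from the action of $M_x\otimes I$ and $I\otimes N_y$ on the purification and a Cauchy--Schwarz/rank argument. The Clifford representation is only needed when going back from vectors to operators, i.e.\ in $(iii)\Rightarrow(ii)$.
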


The next result, which  is implicit  in  Tsirelson's work
\cite{TS87}, gives a lower bound on the size of a quantum
representation for any extreme point of the set of quantum
correlations. Since this is not stated explicitly in  \cite{TS87},
for completeness  we have included a short  proof in 
{Appendix} \ref{sec:dsvefwefwe}.

\medskip 
\begin{theorem}[\cite{TS87}]\label{thm:mainlowerbound1}
 Let   $C=(c_{xy}) \in {{\rm ext}(\cornm)}$  and consider a  family of   Hermitian operators
 $\{M_x\}_{x}, \{N_y\}_{y}\subseteq \calh^d$ with eigenvalues  in $[-1,1]$ and a quantum state $\rho\in\calh^{d^2}_+$ satisfying
$c_{xy}=\tr((M_x\otimes N_y)\rho),$ for all $x,y$. Then we have that
$$d\ge {\sqrt{2}^{\lfloor\rank(C)/ 2\rfloor}}.$$
\end{theorem}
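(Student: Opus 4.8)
The plan is to derive the lower bound from the characterization of extreme points of $\cornm$ together with Tsirelson's theorem (Theorem~\ref{thm:tsirelson1}) and the embedding $\gamma$ from Theorem~\ref{thm:cliffordalgebramain}. First I would recall that, by Theorem~\ref{thm:tsirelson1}$(iii)$, the matrix $C=(c_{xy})$ admits a Gram representation $c_{xy}=\la u_x,v_y\ra$ with unit vectors $\{u_x\}_x,\{v_y\}_y\subseteq\R^{n+m}$; let $r:=\rank(C)$ and observe that $r\le\dim\Span(\{u_x\}_x\cup\{v_y\}_y)$, so in fact one can take these vectors to live in $\R^{r'}$ for some $r'\ge r$, and by padding we may assume $r'=r$ if $r$ is even (or $r+1$ if odd; this parity bookkeeping is what produces the floor $\lfloor\rank(C)/2\rfloor$ in the exponent). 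The key point, which is where extremality enters, is that for an \emph{extreme} point of $\cornm$ the Gram vectors are in some sense rigid: any quantum representation $\{M_x\}_x,\{N_y\}_y,\rho$ of $C$ must, after the standard reduction, factor through the Clifford embedding $\gamma$ of a fixed-dimensional vector configuration whose span has dimension at least $r$.

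The core step is the following chain. Given any $d$-dimensional quantum representation $\{M_x\}_x,\{N_y\}_y,\rho$ with $c_{xy}=\tr((M_x\otimes N_y)\rho)$ and eigenvalues in $[-1,1]$, I would first argue (this is the content of the appendix proof referenced as Appendix~\ref{sec:dsvefwefwe}, and is implicit in Tsirelson) that extremality of $C$ forces the operators to be \emph{$\pm1$ observables} and the state $\rho$ to be pure and, moreover, that the vectors $u_x$ (resp.\ $v_y$) in the Gram representation are uniquely determined up to a common orthogonal transformation. One then builds, from $\{M_x\}_x$ and $\{N_y\}_y$, the real vectors realizing $C$, and the anticommutation/Clifford structure \eqref{eq:clifford} together with extremality shows that the $M_x$ must generate (a representation of) the Clifford algebra on $\R^{r}$: concretely, the $M_x$'s pairwise anticommute in the relevant sense forced by the rigidity, so they span a space on which a Clifford algebra of rank $\approx r$ acts, and such a representation has dimension at least $2^{\lfloor r/2\rfloor}$. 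Since the $M_x$ act on $\C^d$, this yields $d\ge 2^{\lfloor r/2\rfloor}=\sqrt2^{\,\lfloor\rank(C)/2\rfloor}$.

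Concretely, the steps in order are: (1) invoke Theorem~\ref{thm:tsirelson1}$(iii)$ to get the Gram factorization of $C$ and fix $r=\rank(C)$; (2) take an arbitrary quantum representation $\{M_x\}_x,\{N_y\}_y,\rho$ on $\C^d$; (3) use extremality of $C\in\ext(\cornm)$ to show the representation is essentially unique — the observables are $\pm1$ observables, $\rho$ is pure, and the induced real vectors coincide (up to orthogonal transformation) with the $u_x,v_y$; (4) deduce from this rigidity that the $\{M_x\}_{x}$ satisfy the Clifford anticommutation relations of a $\gamma$-type embedding of an $r$-dimensional space, i.e.\ $M_xM_{x'}+M_{x'}M_x$ is a scalar matrix for all $x,x'$ with the scalars forming a rank-$r$ Gram matrix; (5) invoke the classical lower bound on the dimension of irreducible (or any) representations of the Clifford algebra $\mathrm{Cl}(r)$ — namely $\ge 2^{\lfloor r/2\rfloor}$ — to conclude $d\ge\sqrt2^{\,\lfloor r/2\rfloor}$.

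The main obstacle is step~(3)–(4): establishing the rigidity of quantum representations of an extreme point and extracting the full Clifford anticommutation structure from it. This is precisely the nontrivial part of Tsirelson's analysis — one has to show that at an extreme point there is no ``slack'' in the SDP characterization, so the Gram vectors are pinned down, and then that pinning down the vectors forces the operators realizing them to carry a faithful Clifford action of the right rank. I expect this to require a careful argument combining the SDP duality / faces-of-the-elliptope viewpoint (an extreme point of $\cornm$ corresponds to a configuration whose defining constraints leave a unique completion) with the structure theory of operator solutions of \eqref{eq:clifford}; everything after that, in particular step~(5), is the standard representation-dimension bound for Clifford algebras and is routine.
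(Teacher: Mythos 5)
Your plan correctly identifies the two pillars of the argument (rigidity at extreme points forcing a Clifford structure, followed by the classical dimension bound for Clifford representations), but it misplaces where the Clifford algebra lives, and this error propagates into a false arithmetic identity at the very end.

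In step (4) you assert that the operators $M_x\in\calh^d$ themselves satisfy Clifford anticommutation relations and conclude $d\ge 2^{\lfloor r/2\rfloor}$. This is not what Tsirelson's rigidity theorem delivers. The paper's proof (and \cite[Theorem 3.1]{TS87}) works with the tensored operators $A_x:=M_x\otimes I_d$ and $B_y:=I_d\otimes N_y$ acting on $\C^{d^2}$, and even then one first has to pass to a sub-representation (restricting to the support of a suitable projector $P$) in order to satisfy the indecomposability hypothesis (condition $(v)$ of Theorem~\ref{thm:tsirelson2}). That restriction destroys the tensor-product structure: the reduced operators are generally \emph{not} of the form $M'_x\otimes I$, so the resulting Clifford relations live on a Hilbert space of dimension at most $d^2$, not $d$. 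This is exactly why the theorem's lower bound is $\sqrt{2}^{\lfloor\rank(C)/2\rfloor}$ rather than $2^{\lfloor\rank(C)/2\rfloor}$: the Clifford representation forces $d^2\ge 2^{\lfloor\tau_C/2\rfloor}$ and then $d\ge\sqrt{2}^{\lfloor\tau_C/2\rfloor}$. Your final line $d\ge 2^{\lfloor r/2\rfloor}=\sqrt{2}^{\lfloor\rank(C)/2\rfloor}$ is a non-identity, and the left-hand inequality you claim would actually be a \emph{stronger} statement than the theorem; you have not justified it, and in general it does not follow from Tsirelson's analysis. Separately, your step (3) claims extremality forces the $M_x,N_y$ to be $\pm 1$ observables and $\rho$ to be pure; the paper makes no such claim and instead handles the degeneracy by the support-restriction argument just described. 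Finally, the dimension entering the Clifford bound is $\tau_C$ (the common dimension of all $C$-systems, which satisfies $\tau_C\ge\rank(C)$ by Lemma~\ref{lem:tsirelson}), not $\rank(C)$ directly; this matters for correctness of the intermediate statements even though the final inequality then uses $\tau_C\ge\rank(C)$.
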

\medskip

{We note that Slofstra~\cite{S11} generalized Tsirelson's lower bound given above  by considering {\emph{near-extremal}} quantum correlations and their {\emph{approximate representations}}.}

As we explain in the next section, Theorem
\ref{thm:mainlowerbound1} turns out to be the  main ingredient for
constructing  cpsd matrices whose cpsd-rank is exponential in terms
of their {sizes}.

\subsection{Gram-Lorentz behaviors with large quantum representations}\label{sec:glbehaviorsstuff}

{In this section  we show that for every $n\ge 1$ there exists a Gram-Lorentz behavior $\pbf_n$ corresponding to the $(n,n,2,2)$-scenario such that  $\mathcal{D}(\pbf_n)\ge 2^{\Omega(\sqrt{n})}$ (cf. Theorem~\ref{thm:GLbehaviorlowerbound}). }

\subsubsection{Going from quantum correlations to {Gram-Lorentz} behaviors}\label{sec:corrtobeha}

By Theorem~\ref{thm:tsirelson1} we can associate a quantum behavior to any  quantum  correlation.

\medskip 
\begin{definition}\label{def:goodbehaviors}
For any  {$C \in \cornm$} we denote by  $\pbf_C=(p_C(ab|xy))$ the quantum behavior  given by {$g((0,0,C))$}.  Concretely, by  \eqref{eq:expectationstovectors}  we have   that
\be\label{eq:inversec}
p_C(ab|xy)={1 + ab \, c_{xy} \over 4}, \text{ for all }a,b,x,y.
\ee
\end{definition}
\medskip

It is also useful to arrange the entries of $\pbf_C$ into a $2n\times 2m$ matrix given by
\be\label{eq:behaviormatrix}
P_C:=\sum_{a,b\in \{\pm 1\} ,x,y\in [n]}p_C(ab|xy)\  e_ae_b^\sfT\otimes e_xe_y^\sfT={1\over 4}\begin{pmatrix}J+C & J-C\\J-C& J+C\end{pmatrix}.
\ee

\medskip
\begin{remark} {Note that the behavior $ \pbf_C$ is well-defined. This follows by Theorem~\ref{thm:tsirelson1} $(ii)$ as $(0,0,C)$ is a full quantum  correlation vector for  any $C \in \cornm$.  }
\end{remark}
\medskip

{As it turns out,  behaviors constructed in this manner have interesting~properties.}

\medskip
\begin{lemma}\label{cddfsvdfv}
For any  $C=(c_{x,y})\in \cornm$ the  behavior $\pbf_C$ is Gram-Lorentz. In particular, consider  unit vectors $\{u_x\}_x$ and $\{v_y\}_{y}$ in $\R^{n+m}$ such that $c_{xy}=\la u_x,v_y\ra,$ for all  $x,y$ (these exist by Theorem \ref{thm:tsirelson1} $(iii)$).  Then we have that
\be
p_C(ab|xy)=\la \ell^x_a, \tilde{\ell}^y_b\ra, \text{ for all } a,b,x,y, \text{ where}
 \ee
 \be\label{eq:lorentzvectors}
\ell^x_a={1\over 2}(1,au_x), \, \forall x\in X, a\in \{\pm1\}, \, \text{ and } \, \tilde{\ell}^y_b={1\over 2}(1, bv_y), \, \forall y\in Y,  b\in \{\pm 1\}.
\ee
\end{lemma}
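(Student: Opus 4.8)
The plan is to verify directly that the Lorentz vectors defined in \eqref{eq:lorentzvectors} reproduce the probabilities $p_C(ab|xy)$ from \eqref{eq:inversec}, and that they genuinely lie in a Lorentz cone. First I would compute the inner product
\[
\langle \ell^x_a, \tilde{\ell}^y_b\rangle
= \frac14\left(1 + ab\,\langle u_x, v_y\rangle\right)
= \frac{1 + ab\,c_{xy}}{4},
\]
using that $\{u_x\}_x$ and $\{v_y\}_y$ are unit vectors with $c_{xy}=\langle u_x,v_y\rangle$ (these exist by Theorem~\ref{thm:tsirelson1}~$(iii)$), together with $a^2=1$ when forming the product of the first coordinates. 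By \eqref{eq:inversec} the right-hand side is exactly $p_C(ab|xy)$, which establishes the displayed identity.

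Next I would check that each vector lies in a Lorentz cone so that the resulting Gram matrix is Gram-Lorentz by definition. The vector $\ell^x_a = \tfrac12(1, au_x)$ has first coordinate $\tfrac12$ and spatial part of norm $\tfrac12\|au_x\| = \tfrac12\|u_x\| = \tfrac12$, since $u_x$ is a unit vector; hence $\ell^x_a \in \calL_{n+m+1}$ (it lies on the boundary). The same computation applies to $\tilde{\ell}^y_b = \tfrac12(1, bv_y)$. Therefore all $2n+2m$ vectors lie in the common cone $\calL_{n+m+1}$, and the Gram matrix of this family, which has block structure recorded by the matrix $P_C$ in \eqref{eq:behaviormatrix} (up to the bookkeeping of indices $(a,x)$ and $(b,y)$), is a Gram-Lorentz matrix. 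Consequently $\pbf_C$ is a Gram-Lorentz behavior in the sense of Section~\ref{sec:unbiasedarelorentz}.

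There is no serious obstacle here: the statement is essentially a direct substitution once Tsirelson's vector characterization (Theorem~\ref{thm:tsirelson1}~$(iii)$) is invoked to produce the unit vectors $u_x, v_y$. The only point requiring a little care is matching the block indexing of \eqref{eq:behaviormatrix} with the requirement in Theorem~\ref{thm:conicformulation} that the associated matrix $R \in \CPSD^{n'}$ (with $n' = 2n + 2m$ here) satisfy the normalization constraints \eqref{eq:cond1}--\eqref{eq:cond4}; but since $\pbf_C$ was constructed as $g((0,0,C))$ via \eqref{eq:expectationstovectors}, these are automatic, and the factorization by Lorentz vectors exhibited above certifies $\pbf_C \in \calQ_{\CL}$. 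One may also note in passing that this construction specializes to give Theorem~\ref{thm:unbiasedquantumislorentz}: every unbiased correlation $(0,0,C)$ arising from a quantum behavior in the $(m_A,m_B,2,2)$-scenario is of the form handled here.
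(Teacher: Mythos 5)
Your direct computation of $\langle \ell^x_a, \tilde\ell^y_b\rangle = \tfrac14(1 + ab\,c_{xy}) = p_C(ab|xy)$ is correct, and it is in fact more elementary than the paper's own argument, which passes through the isometry $\Gamma$ of Theorem~\ref{thm:isometry} and the maximally entangled state before invoking the isometry property to land back on the same inner product. Your verification that the vectors lie on the boundary of $\calL_{n+m+1}$ is also fine.

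There is, however, a gap in how you conclude that $\pbf_C$ is Gram-Lorentz. Membership in $\calQ_{\CL}$ means there exists $R \in \CL \cap \calA(\pbf_C)$, and $\calA(\pbf_C)$ is cut out by all four conditions \eqref{eq:cond1}--\eqref{eq:cond4}. Your inner-product computation gives \eqref{eq:cond4}, and \eqref{eq:cond2} then follows because $\sum_{a,b} p_C(ab|xy) = 1$. But \eqref{eq:cond1} and \eqref{eq:cond3} constrain the Alice--Alice and Bob--Bob diagonal blocks of $R$, and these are not pinned down by $\pbf_C$: many matrices $R$ with the prescribed off-diagonal block $p_C(ab|xy)$ fail to have the required diagonal block sums, so ``automatic because $\pbf_C = g((0,0,C))$'' is not a valid justification. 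The missing observation, which the paper's proof records, is that
\[
\ell^x_1 + \ell^x_{-1} = \tfrac12(1,u_x) + \tfrac12(1,-u_x) = e_1
\quad\text{and}\quad
\tilde\ell^y_1 + \tilde\ell^y_{-1} = e_1
\]
for all $x,y$, whence $\sum_{a,a'}\langle \ell^x_a, \ell^{x'}_{a'}\rangle = \langle e_1, e_1\rangle = 1$ and similarly for the Bob block. Adding this line closes the gap and the argument is then complete (and still slightly cleaner than the paper's).
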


\begin{proof}
By \eqref{eq:inversec}  we have  that  $p_C(ab|xy)=(1+ab \, c_{xy})/4,$ for all $a,b,x,y$.
By Theorem \ref{thm:cliffordalgebramain} $(iii)$ we get
$c_{xy}=\la u_x,v_y\ra=\Psi_d^*\left(\gamma(u_x)\otimes \gamma(v_y)^\top\right)\Psi_d,$ for all $x,y$, where $d:=2^{\left\lfloor \frac{n+m}{2}\right\rfloor}$.
This gives
\be\label{fevrbregertg}
p_C(ab|xy)=\Psi_d^*\left({I+a\gamma(u_x)\over 2}\right)\otimes\left( {I+b\gamma(v_y)^\sfT\over 2}\right)\Psi_d, \text{ for all } a,b,x,y.
\ee
Set  $$\Gamma(\ell^x_a)=
{1\over \sqrt{d}}\left(\frac{I +a\ \gamma\left(u_x\right)}{2} \right) \in \calh^d_+, \text{ for }   a\in \{\pm 1\},
$$ and
$$\Gamma(\tilde{\ell}^y_b)={1\over \sqrt{d}}\left(\frac{I +b \ \gamma\left(v_y\right)}{2} \right)\in \calh^d_+, \text{ for }   b\in \{\pm 1\},$$
where $\Gamma$ was defined in  \eqref{eq:isometry}. Using \eqref{eq:maxent}, it follows by \eqref{fevrbregertg} that
\be
p_C(ab|xy)= {\la \Gamma(\ell^x_a), \Gamma(\tilde{\ell}^y_b) \ra}=\la \ell^x_a, {\tilde{\ell}_b^y} \ra, \text{ for all } a,b,x,y,
\ee
where we used the fact that $\Gamma$ is an isometry.
Since the vectors $\{u_x\}_x$ and $\{v_y\}_{y}$ are unit it follows  that   the vectors {$\{\ell^x_{a}\}_{a,x}$ ,$\{\tilde{\ell}^y_{b}\}_{b,y}$} belong to the Lorentz cone
$\calL_{m+n+1}$. Furthermore,  by \eqref{eq:lorentzvectors} we have  that  $\ell^x_1+\ell^x_{-1}=\tilde{\ell}^y_1+\tilde{\ell}^y_{-1}=e_1,$ for all $x,y$ {implying $\gram(\{\ell^x_{a}\}_{a,x} ,\{\tilde{\ell}^y_{b}\}_{b,y}) \in \calA(\pbf_C)$}.
Thus the behavior $\pbf_C$ is Gram-Lorentz.
\end{proof}

\medskip

\begin{remark}\label{rem:unbiasedgramlorentz}
As an immediate consequence of  Lemma \ref{cddfsvdfv} it follows that every unbiased quantum behavior is Gram-Lorentz.
\end{remark}

\medskip
We are now ready to   translate  Theorem \ref{thm:mainlowerbound1} to Gram-Lorentz behaviors.

\medskip
\begin{theorem}\label{cor:lowerboundcorrelations}
For any {$C\in {\rm ext}(\cornm)$} we have that $\pbf_C$ is Gram-Lorentz~and
$$\mathcal{D}(\pbf_{C})\ge {\sqrt{2}^{\lfloor \rank(C)/ 2\rfloor}}.$$
\end{theorem}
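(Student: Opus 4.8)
The plan is to reduce the statement directly to Theorem~\ref{thm:mainlowerbound1} by combining it with the results established earlier about the behaviors $\pbf_C$. The Gram-Lorentz part is immediate: it was already shown in Lemma~\ref{cddfsvdfv} that for \emph{any} $C\in\cornm$ the behavior $\pbf_C$ is Gram-Lorentz, and $\mathrm{ext}(\cornm)\subseteq\cornm$, so there is nothing further to prove there. The content of the statement is therefore the dimension lower bound $\mathcal{D}(\pbf_C)\ge\sqrt{2}^{\lfloor\rank(C)/2\rfloor}$ for extreme points $C$.

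For the lower bound, first I would unwind the definition of $\mathcal{D}(\pbf_C)$: suppose $\pbf_C$ admits a $d$-dimensional quantum representation, i.e.\ there are POVMs $\{M_{a|x}\}_a$, $\{N_{b|y}\}_b$ acting on $\C^d$ and a state $\rho$ on $\C^d\otimes\C^d$ with $p_C(ab|xy)=\Tr((M_{a|x}\otimes N_{b|y})\rho)$ for all $a,b,x,y$. By Remark~\ref{rem:equivalencebetweenrepresentations}, this is equivalent to saying that the full quantum correlation $f(\pbf_C)=(0,0,C)$ has a $d$-dimensional quantum representation. Passing to $\pm1$ observables via $M_x:=M_{1|x}-M_{-1|x}$ and $N_y:=N_{1|y}-N_{-1|y}$ as in the proof of Lemma~\ref{lem:qrealizationcorr}, one obtains Hermitian operators $\{M_x\}_x,\{N_y\}_y\subseteq\calh^d$ with eigenvalues in $[-1,1]$ and a state $\rho\in\calh^{d^2}_+$ satisfying $c_{xy}=\Tr((M_x\otimes N_y)\rho)$ for all $x,y$. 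Since $C=(c_{xy})\in\mathrm{ext}(\cornm)$ by hypothesis, Theorem~\ref{thm:mainlowerbound1} applies verbatim and yields $d\ge\sqrt{2}^{\lfloor\rank(C)/2\rfloor}$. As this holds for every $d$ for which a $d$-dimensional representation exists, taking the minimum gives $\mathcal{D}(\pbf_C)\ge\sqrt{2}^{\lfloor\rank(C)/2\rfloor}$, as required.

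I do not anticipate a genuine obstacle here: the theorem is essentially a packaging of Theorem~\ref{thm:mainlowerbound1} together with the already-established facts that $\pbf_C$ is well-defined (Theorem~\ref{thm:tsirelson1}$(ii)$, since $(0,0,C)$ is a legitimate full quantum correlation for any $C\in\cornm$) and Gram-Lorentz (Lemma~\ref{cddfsvdfv}). The only point requiring a little care is the bookkeeping in the correspondence between a quantum representation of the behavior $\pbf_C$ and a quantum representation of the correlation $C$ in the sense of Theorem~\ref{thm:mainlowerbound1} — in particular verifying that the dimension parameter $d$ is preserved under the passage from $\{M_{a|x}\},\{N_{b|y}\}$ to $\{M_x\},\{N_y\}$ and back — but this is exactly the content of Remark~\ref{rem:equivalencebetweenrepresentations} and the proof of Lemma~\ref{lem:qrealizationcorr}, so it can be invoked rather than reproven.
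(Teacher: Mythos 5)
Your proposal is correct and follows essentially the same route as the paper: quote Lemma~\ref{cddfsvdfv} for the Gram-Lorentz part, invoke Remark~\ref{rem:equivalencebetweenrepresentations} to transfer the dimension of a representation from the behavior $\pbf_C$ to the full correlation $(0,0,C)$, and then apply Theorem~\ref{thm:mainlowerbound1}. The only difference is that you spell out the POVM-to-observable translation, which the paper leaves implicit in the cited remark.
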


\begin{proof}
{Fix $C\in {\rm ext}(\cornm)$} and let {$\pbf_C=g((0,0,C))$}.
We already determined  in Lemma \ref{cddfsvdfv} that $\pbf_C$ is Gram-Lorentz.
{By definition,
we have that $\mathcal{D}(\pbf_{C})$ is equal to the {least} integer $d\ge 1$ for which $\pbf_C$ admits  a $d$-dimensional representation. Since $(0,0,C) = f(\pbf_C)$, by Remark~\ref{rem:equivalencebetweenrepresentations} we know that $\mathcal{D}(\pbf_{C})$ is also equal to the {least} integer $d\ge 1$ for which $(0,0,C)$ admits  a $d$-dimensional representation.
By Theorem~\ref{thm:mainlowerbound1}, the latter quantity is lower bounded by {$\sqrt{2}^{\lfloor \rank(C)/ 2 \rfloor}$} as desired.}
\end{proof}
\medskip

In view of Theorem  \ref{cor:lowerboundcorrelations},  to construct Gram-Lorentz behaviors all of whose quantum representations require exponential size, it suffices to identify high-rank extreme points of $\cornm.$ In the next section we consider this problem {for} the case~$n=m$.

\subsubsection{High-rank extremal quantum correlations}
\label{sec:extremepoints}

Throughout this section we set  $n=m$ and we  view   any   $C\in {\rm Cor}(n,n)$ as  a square $ n\times n$ matrix.

Of special interest to us  are the  elements of ${\rm Cor}(n,n)$ whose diagonal entries are all equal to 1.
Specifically, in our next lemma below we show they coincide with the  $n$-dimensional {\em elliptope}, denoted by $\calE_n$, which is defined as the set of $n\times n$ symmetric psd matrices with  diagonal entries   equal to~1. The elliptope is a spectrahedral set  whose structure has  been  extensively  studied (e.g. see \cite{DL} and references therein).

We begin this {section} by determining  a useful relation
between $\calE_n$ and ${\rm Cor}(n,n)$.

\medskip 
\begin{proposition}\label{efweferfer}
{We have that ${\ext (\calE_n) \subseteq \ext ({\rm Cor}(n,n))}.$}
\end{proposition}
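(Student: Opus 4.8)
The plan is to deduce the inclusion from the observation that a quantum correlation matrix (for $m=n$) whose diagonal is all ones is automatically an element of the elliptope. First I would record the routine containment $\calE_n\subseteq{\rm Cor}(n,n)$, which is what makes the assertion $\ext(\calE_n)\subseteq\ext({\rm Cor}(n,n))$ meaningful: given $C\in\calE_n$, since $C$ is symmetric PSD with $C_{ii}=1$ we may write $C=\gram(\{w_i\}_{i=1}^n)$ for unit vectors $w_i$; regarding the $w_i$ as vectors in $\R^{2n}$ and setting $u_x:=w_x=:v_x$, condition $(iii)$ of Theorem~\ref{thm:tsirelson1} is satisfied, so $C\in{\rm Cor}(n,n)$. (Recall also that ${\rm Cor}(n,n)$ is convex, again by Theorem~\ref{thm:tsirelson1}$(iii)$, so its extreme points are well-defined.)

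For the main point, fix $C\in\ext(\calE_n)$ and suppose $C=\lambda A+(1-\lambda)B$ for some $\lambda\in(0,1)$ and $A,B\in{\rm Cor}(n,n)$; the goal is to conclude $A=B=C$. Since every entry of a matrix in ${\rm Cor}(n,n)$ lies in $[-1,1]$, we have $A_{ii},B_{ii}\le 1$, and then $1=C_{ii}=\lambda A_{ii}+(1-\lambda)B_{ii}$ together with $\lambda,1-\lambda>0$ forces $A_{ii}=B_{ii}=1$ for every $i$. Next I would apply Theorem~\ref{thm:tsirelson1}$(iii)$ to $A$ to obtain unit vectors $\{u_x\}_x,\{v_y\}_y\subseteq\R^{2n}$ with $A_{xy}=\la u_x,v_y\ra$ for all $x,y$. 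From $\la u_x,v_x\ra=A_{xx}=1$ together with $\|u_x\|=\|v_x\|=1$, the equality case of Cauchy--Schwarz gives $u_x=v_x$ for every $x$; hence $A_{xy}=\la u_x,u_y\ra$, so $A=\gram(\{u_x\}_x)$ is symmetric, positive semidefinite, and has unit diagonal, i.e., $A\in\calE_n$. The same argument applied to $B$ shows $B\in\calE_n$.

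Finally, since $C$ is an extreme point of $\calE_n$ and we have written $C=\lambda A+(1-\lambda)B$ with $A,B\in\calE_n$ and $\lambda\in(0,1)$, it follows that $A=B=C$; as the decomposition was arbitrary, $\{C\}$ is a face of ${\rm Cor}(n,n)$, that is, $C\in\ext({\rm Cor}(n,n))$.

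I expect the only mildly subtle step to be the middle paragraph: the key point is that the all-ones diagonal constraint collapses Tsirelson's two families of unit vectors into a single family via the equality case of Cauchy--Schwarz, which upgrades a correlation matrix to an actual elliptope matrix (symmetric and positive semidefinite). Once that is in place, the extremality transfer is immediate and the remaining steps are bookkeeping.
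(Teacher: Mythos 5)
Your proof is correct and follows essentially the same route as the paper's: force the diagonals of $A$ and $B$ to be all ones, invoke Tsirelson's Theorem~\ref{thm:tsirelson1}$(iii)$ to write $A$ as a Gram-type matrix of unit vectors, and then use the equality case of Cauchy--Schwarz on the diagonal to collapse the two vector families and conclude $A,B\in\calE_n$, after which extremality in $\calE_n$ finishes the argument. The minor extra care you take (restricting to $\lambda\in(0,1)$ and recording $\calE_n\subseteq{\rm Cor}(n,n)$ up front) only tightens the exposition and does not change the method.
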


\begin{proof}
Fix $X\in \ext (\calE_n)$ and let $X=\lambda A+(1-\lambda)B$, where $A,B\in {\rm Cor}(n,n)$ and $\lambda\in [0,1]$. For all $i\in [n]$ we have that $1=\lambda A_{ii}+(1-\lambda)B_{ii}$  and since  $A_{ii},B_{ii}\in [-1,1]$ it follows that
$1=A_{ii}=B_{ii}$, for all $i\in [n].$

We now show that  $A,B\in \calE_n$, and the proof is concluded by the extremality assumption.  By Theorem \ref{thm:tsirelson1} $(iii)$
  there exist unit vectors $\{u_i\}_i$ and $\{v_j\}_j$ such that $A_{ij}=\la u_i,v_j\ra,$ for all $i,j\in [n]$. By the Cauchy-Schwartz inequality we have that
$1=A_{ii}=\la u_i,v_i\ra \le 1, $ for all $i\in [n]$.  Thus, equality holds throughout which implies that $u_i$ is parallel to $v_i,$ for all $i\in[n]$. Lastly, as all vectors have unit norm  we have   $u_i=v_i$ for all $i\in [n]$ {and thus $A\in \calE_n$}. Similarly, we have $B\in \calE_n$.
\end{proof}
\medskip

Using  Proposition  \ref{efweferfer} we can construct extreme points
of ${\rm Cor}(n,n)$ using extreme points of $\calE_n$. This is
extremely  useful as the extreme points of the elliptope are
completely understood.  We explain this in the remaining part of
this section.

Throughout, we denote by   $r_{\max}(n)$  the {greatest} integer
satisfying $\binom{r+1}{2}\le n$,~i.e.,
$$r_{\max}(n)=\left\lfloor {\sqrt{1+8n}-1 \over 2}\right\rfloor.$$

We now state two well-known results concerning properties of extreme
points of the elliptope that we use in the next section.   The first
one due to  \cite{LT94} (see also \cite[Corollary 31.5.4]{DL})
allows {one} to easily check whether a matrix $X\in \calE_n$   is
{an extreme~point}.

\medskip
\begin{theorem} [{\cite{LT94}}] \label{thm:perturbationsdimension}
Let $X\in \calE_n$  with $\rank(X)= r$ and let   $\{u_i\}_{i=1}^n\in \R^r$  be a Gram representation of $X$.
Then $X\in \ext {(\calE_n})$ if and only if
$$\dim ( \Span( { \{ u_iu_i^\sfT: i\in [n] \}}))=\binom{r+1}{2}.$$
\end{theorem}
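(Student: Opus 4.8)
The plan is to characterize extremality of $X \in \calE_n$ by describing the tangent directions at $X$ that preserve membership in $\calE_n$, and then showing that no such direction exists if and only if the dimension count holds. First I would set up the linear algebra. Fix a Gram representation $X = \gram(\{u_i\}_{i=1}^n)$ with $u_i \in \R^r$ and $r = \rank(X)$; the key point, used repeatedly, is that because the $u_i$ span $\R^r$, any symmetric matrix $W \in \calS^n$ with $\ran(W) \subseteq \ran(X)$ can be written in the form $W = (\la u_i, S u_j \ra)_{i,j}$ for a \emph{unique} $S \in \calS^r$. Indeed $W = U^\sfT S U$ where $U$ is the $r \times n$ matrix with columns $u_i$, and uniqueness follows since $U$ has full row rank $r$.

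Next I would argue the ``if'' direction. Suppose $X = \tfrac12(A+B)$ with $A, B \in \calE_n$, $A \neq B$; set $W := A - X = X - B \neq 0$. Since $A = X + W \succeq 0$ and $B = X - W \succeq 0$, a standard fact about psd matrices forces $\ran(W) \subseteq \ran(X)$ (any direction $\psi$ with $X\psi = 0$ satisfies $\psi^*(X\pm W)\psi = \pm\psi^*W\psi \geq 0$, hence $\psi^* W \psi = 0$, and since $X \pm W \succeq 0$ this gives $W\psi = 0$). Writing $W = U^\sfT S U$ for the unique $S \in \calS^r$ as above, the diagonal constraint $A_{ii} = B_{ii} = 1 = X_{ii}$ gives $W_{ii} = 0$, i.e. $\la u_i, S u_i \ra = \tr(S\, u_iu_i^\sfT) = 0$ for all $i \in [n]$. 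Thus $S$ is a nonzero element of $\calS^r \cong \R^{\binom{r+1}{2}}$ orthogonal to $\Span(\{u_iu_i^\sfT : i \in [n]\})$, so that span has dimension strictly less than $\binom{r+1}{2}$. Contrapositively, if the dimension equals $\binom{r+1}{2}$ then no such $W$ exists and $X$ is extreme.

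For the ``only if'' direction, suppose $\dim(\Span(\{u_iu_i^\sfT : i \in [n]\})) < \binom{r+1}{2}$, so there is a nonzero $S \in \calS^r$ with $\tr(S\, u_iu_i^\sfT) = 0$ for all $i$. Set $W := U^\sfT S U$; then $W \neq 0$ (again by full row rank of $U$), $W_{ii} = 0$ for all $i$, and $\ran(W) \subseteq \ran(X)$. The remaining point is that for sufficiently small $\epsilon > 0$, both $X \pm \epsilon W$ are psd: writing $X = U^\sfT U$ after a change of basis so that $X$ restricted to its range is positive definite, we have $X \pm \epsilon W = U^\sfT (I_r \pm \epsilon S) U$, and $I_r \pm \epsilon S \succ 0$ for $\epsilon < 1/\|S\|$. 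Hence $X \pm \epsilon W \in \calE_n$, they are distinct, and $X = \tfrac12\big((X+\epsilon W)+(X-\epsilon W)\big)$ exhibits $X$ as a nontrivial convex combination, so $X \notin \ext(\calE_n)$.

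The main obstacle — really the only nontrivial point — is establishing the bijection between symmetric perturbations $W$ of $X$ supported on $\ran(X)$ and matrices $S \in \calS^r$, together with the fact that the relevant perturbations are exactly those satisfying $\ran(W) \subseteq \ran(X)$; once this ``lift to the $r \times r$ level'' is in place, both directions reduce to the elementary observation that the feasible perturbation directions at $X$ form the orthogonal complement of $\Span(\{u_iu_i^\sfT\})$ inside $\calS^r$, and $X$ is extreme precisely when this complement is $\{0\}$. Everything else is bookkeeping with psd matrices and the diagonal constraint.
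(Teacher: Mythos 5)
Your proof is correct. Note that the paper does not actually prove Theorem~\ref{thm:perturbationsdimension}; it cites it to \cite{LT94} (and the exposition in \cite[Corollary 31.5.4]{DL}). Your argument is a correct, self-contained derivation along exactly the standard lines: identify the symmetric perturbations $W$ with $X \pm W \succeq 0$ as those supported on $\ran(X)$, lift them bijectively to $S \in \calS^r$ via $W = U^\sfT S U$ (using that $U$ has full row rank $r$), observe that the diagonal constraint $W_{ii}=0$ is precisely $\tr(S\,u_iu_i^\sfT)=0$, and conclude that $X$ is extreme iff the orthogonal complement of $\Span(\{u_iu_i^\sfT\})$ in $\calS^r \cong \R^{\binom{r+1}{2}}$ is trivial. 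One small stylistic remark: in the converse direction the phrase ``after a change of basis so that $X$ restricted to its range is positive definite'' is unnecessary --- you already have $X = U^\sfT U$ and $W = U^\sfT S U$, so $X \pm \epsilon W = U^\sfT(I_r \pm \epsilon S)U$ directly, and $I_r \pm \epsilon S \succ 0$ for $\epsilon < 1/\|S\|$ suffices. Everything else (the range-containment argument in the forward direction, the subtraction giving $W\psi = 0$, the injectivity of $S \mapsto U^\sfT S U$) is sound.
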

\medskip

The second result due to \cite{GPW} (see also \cite[Proposition
31.5.7]{DL}) specifies the range of possible ranks for the  extreme
points of the elliptope and moreover shows that every value  in that
range is {achievable}.

\medskip
\begin{theorem}[{\cite{GPW}}]\label{thm:extremepoints}
For any $X\in \ext (\calE_n)$   we have that $\rank(X)\le r_{\max}(n)$. Furthermore, for any integer $r$ in the  range  $1\le r\le r_{\max}(n)$ there exists $X_r\in \ext (\calE_n)$ with $r=\rank(X_r)$.
\end{theorem}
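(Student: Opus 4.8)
The plan is to establish the two halves separately; both follow quickly from Theorem~\ref{thm:perturbationsdimension}. For the bound $\rank(X)\le r_{\max}(n)$, let $X\in\ext(\calE_n)$ have rank $r$ and let $\{u_i\}_{i=1}^n\subseteq\R^r$ be a Gram representation of $X$ (so the $u_i$ span $\R^r$). By Theorem~\ref{thm:perturbationsdimension} the $n$ matrices $\{u_iu_i^\sfT:i\in[n]\}$ span the space $\calS^r$, which has dimension $\binom{r+1}{2}$; since $n$ vectors cannot span a space of dimension larger than $n$, this forces $\binom{r+1}{2}\le n$, i.e.\ $r\le r_{\max}(n)$ by the definition of $r_{\max}$.

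For the existence part, fix $1\le r\le r_{\max}(n)$ and set $N:=\binom{r+1}{2}\le n$. I would first treat the case $n=N$. Consider the family of $N$ unit vectors in $\R^r$ consisting of the $r$ standard basis vectors $e_1,\dots,e_r$ together with the $\binom{r}{2}$ vectors $(e_i+e_j)/\sqrt2$ for $1\le i<j\le r$. A short computation shows that the corresponding matrices $u_iu_i^\sfT$ span $\calS^r$: the span contains every $e_ie_i^\sfT$ directly, and since the matrix coming from $(e_i+e_j)/\sqrt 2$ is a positive multiple of $(e_i+e_j)(e_i+e_j)^\sfT = e_ie_i^\sfT+e_je_j^\sfT+e_ie_j^\sfT+e_je_i^\sfT$, the span also contains every $e_ie_j^\sfT+e_je_i^\sfT$. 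Hence these $N=\dim\calS^r$ matrices form a basis of $\calS^r$. The Gram matrix $X_r$ of this family lies in $\calE_n$ (the vectors are unit), has rank exactly $r$ (the $e_1,\dots,e_r$ among them span $\R^r$), and is extreme by Theorem~\ref{thm:perturbationsdimension}.

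For general $n>N$ I would simply append $n-N$ additional copies of $e_1$ to the above family of vectors. The enlarged Gram matrix again lies in $\calE_n$ and still has rank $r$, and since the sub-collection indexed by the first $N$ vectors already spans $\calS^r$, so does the full collection of $n$ matrices $u_iu_i^\sfT$; Theorem~\ref{thm:perturbationsdimension} therefore again certifies that this matrix is an extreme point of $\calE_n$.

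The only genuine content here is exhibiting an explicit basis of $\calS^r$ consisting of rank-one matrices $uu^\sfT$ with $u$ a unit vector, and checking that padding with repeated vectors preserves both the rank and the spanning property; the upper bound is a one-line dimension count once Theorem~\ref{thm:perturbationsdimension} is available, so I do not anticipate a real obstacle. An alternative to the explicit construction would be to observe that a generic $N$-tuple of unit vectors yields linearly independent matrices $u_iu_i^\sfT$ (the degenerate tuples form the zero set of a polynomial that is not identically zero), but the explicit family above is cleaner and self-contained.
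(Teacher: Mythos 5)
Your proof is correct and the constructive half is essentially identical to the paper's Example~\ref{ex:extremepointconstruction}, which likewise takes the $r$ standard basis vectors $e_i$ together with the $\binom{r}{2}$ vectors $(e_i+e_j)/\sqrt 2$ and pads with repeated copies of $e_1$ to reach length $n$, certifying extremality via Theorem~\ref{thm:perturbationsdimension}. The one thing you add beyond what the paper spells out is the dimension-count derivation of the rank bound $\binom{r+1}{2}\le n$ from Theorem~\ref{thm:perturbationsdimension} (the paper simply cites \cite{GPW} for that direction); your one-line argument is correct and makes the whole statement self-contained given Theorem~\ref{thm:perturbationsdimension}.
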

\medskip

\begin{example}\label{ex:extremepointconstruction}
We now describe the constructive part of Theorem \ref{thm:extremepoints} which we use in the next section.
Fix an integer  $r$ satisfying $1\le r\le r_{\max}(n)$.   In particular we have that $\binom{r+1}{2} < n+1$. Let $\{e_i\}_{i=1}^r$ be the standard basis in $\R^r$. For $i,j\in [r]$ define $w_{i,j}:={1\over \sqrt{2}}(e_i+e_j)$.  Define  $X_r$ to  be the Gram matrix of the following  family of vectors:   we use $e_1$  repeated $n+1- \binom{r+1}{2}$ times,  followed by  $e_2,\ldots, e_r$ one time each and lastly,  we use $w_{ij}$ for all $ 1\le i<j\le [r]$. Clearly,  we have that $\rank(X_r)=r$. Furthermore, since the matrices $e_ie_i^\sfT,w_{ij}w_{ij}^\sfT$ are linearly independent it follows that
$$\dim \left( \Span\left(\{e_ie_i^\sfT\}_{i=1}^r, \{ w_{ij}w_{ij}^\sfT\}_{1\le i<j\le [r]} \right) \right)=r+\binom{r}{2}=\binom{r+1}{2}.$$
By Theorem \ref{thm:perturbationsdimension} it follows that  $X_r\in \ext (\calE_n)$.
\end{example}

\subsubsection{Putting everything together}\label{sec:puttingeverythingtogether}

Combining the results given in Sections~\ref{sec:corrtobeha} and \ref{sec:extremepoints} we   now  show that for every $n\ge 1$ there exists a Gram-Lorentz behavior $\pbf_n$ corresponding to the $(n,n,2,2)$-scenario satisfying~$\mathcal{D}(\pbf_n)\ge 2^{\Omega(\sqrt{n})}.$

\begin{theorem}\label{thm:GLbehaviorlowerbound}
Fix $n\ge 1$  and let $C_n\in \ext (\calE_n)$ with  $\rank(C_n)=r_{\max}(n)$. Then
$$\mathcal{D}(\pbf_{C_n})\ge  {\sqrt{2}^{\lfloor \rmax(n)/ 2\rfloor}}.$$
\end{theorem}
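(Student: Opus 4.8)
The plan is to obtain this statement as an essentially immediate consequence of the apparatus assembled in Sections~\ref{sec:corrtobeha} and~\ref{sec:extremepoints}. First, I would note that the hypothesis is non-vacuous: by Theorem~\ref{thm:extremepoints} (or, more concretely, by the explicit construction in Example~\ref{ex:extremepointconstruction}, which produces an extreme point of $\calE_n$ of any prescribed rank up to $r_{\max}(n)$), a matrix $C_n\in \ext(\calE_n)$ with $\rank(C_n)=r_{\max}(n)$ indeed exists.

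Next I would argue that $C_n$ belongs to $\mathrm{Cor}(n,n)$. Since $C_n$ is symmetric positive semidefinite with unit diagonal, taking a Gram representation $C_n=\gram(\{u_i\}_{i=1}^n)$ with unit vectors $u_i$, the Cauchy--Schwarz inequality gives $(C_n)_{ij}\in[-1,1]$, and the identity $(C_n)_{ij}=\la u_i,u_j\ra$ exhibits $C_n$ as a quantum correlation via Theorem~\ref{thm:tsirelson1}$(iii)$. Then Proposition~\ref{efweferfer} upgrades this: because $C_n\in\ext(\calE_n)$, we get $C_n\in\ext(\mathrm{Cor}(n,n))$.

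Finally, I would invoke Theorem~\ref{cor:lowerboundcorrelations} applied to $C:=C_n$. That theorem simultaneously guarantees that the associated behavior $\pbf_{C_n}=g((0,0,C_n))$ is Gram-Lorentz (so it is a legitimate object for the claimed statement about $\calQ_{\CL}$) and that $\mathcal{D}(\pbf_{C_n})\ge \sqrt{2}^{\lfloor \rank(C_n)/2\rfloor}$. Substituting $\rank(C_n)=r_{\max}(n)$ yields exactly $\mathcal{D}(\pbf_{C_n})\ge \sqrt{2}^{\lfloor r_{\max}(n)/2\rfloor}$, which is the assertion.

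In other words, at this point all the genuine difficulty has already been discharged upstream: the substantive steps are Tsirelson's dimension lower bound for extremal quantum correlations (Theorem~\ref{thm:mainlowerbound1}, proved in the appendix), the correlation-to-behavior translation in Lemma~\ref{cddfsvdfv}, and the extremality transfer in Proposition~\ref{efweferfer}. Given those, the present theorem requires no new idea — one simply plugs in the maximal-rank elliptope extreme point — so I do not expect any obstacle specific to this proof; the only thing to be careful about is the bookkeeping identifying $C_n\in\calE_n$ with an element of $\mathrm{Cor}(n,n)$ so that the earlier results apply verbatim.
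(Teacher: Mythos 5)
Your proposal is correct and follows exactly the same route as the paper: invoke Theorem~\ref{thm:extremepoints} for existence, Proposition~\ref{efweferfer} to transfer extremality from $\calE_n$ to $\mathrm{Cor}(n,n)$, and Theorem~\ref{cor:lowerboundcorrelations} for the dimension bound. Your extra remark verifying $C_n\in\mathrm{Cor}(n,n)$ via Tsirelson's Theorem~\ref{thm:tsirelson1}$(iii)$ is a small, harmless elaboration of a step the paper leaves implicit.
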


\begin{proof}
By Theorem \ref{thm:extremepoints} there exists  $C_n\in \ext (\calE_n)$ with  $\rank(C_n)=r_{\max}(n)$.  By~\eqref{efweferfer}  it follows that $C_n\in \ext  ({\rm Cor}(n,n))$. The proof is concluded by Theorem~\ref{cor:lowerboundcorrelations}.
\end{proof}
\medskip

We conclude this section with an explicit example. To ease the exposition   we only consider matrices of size $N:=2n^2+n,$ for  any $n\ge 1$. In this case  $\rmax(N)=2n$.

By Theorem \ref{thm:extremepoints} there exists $C_n\in \ext (\calE_N)$ with $\rank(C_n)=2n$. As described in
Example~\ref{ex:extremepointconstruction}, the matrix $C_n$  is defined as the Gram matrix of the  vectors
\be\label{cewverger}
w_{ii}:=e_i, \text{ for } i\in [2n] \; \text{ and } \; w_{ij}:={1\over \sqrt{2}}(e_i+e_j), \text{ for } 1\le i<j\le [2n].
\ee

It is instructive to think of the underlying Bell scenario  as each
player having $\binom{2n+1}{2}$ questions that are  indexed by the
2-element {\em multisets}  of  $[2n]$.  In particular,  the first
$2n$ questions  correspond to the  {multisets} $\{\{i,i\}: i\in
[2n]\}$ and the remaining $\binom{2n}{2}$ questions  correspond to
$\{\{i,j\}: 1\le i<j\le 2n\}$.

By construction, the entries of $C_n$ are given by
\be C_n=\begin{pmatrix} I_n & A_n\\A_n^\sfT & B_n\end{pmatrix}, \text{where}
\ee
\be\label{eq:goodexample}
 A_n[ii,kl]=\begin{cases}{1\over \sqrt{2}}, \text{ if } i\in \{k,l\},\\
0, \text{ otherwise},
\end{cases}  \text{ and } \ B_n[ij,kl]={1\over 2}|\{i,j\}\cap \{k,l\}|.
\ee
Lastly, using \eqref{eq:behaviormatrix} we have that
\be\label{ex:explicitexample}
P_{C_n}={1\over 4}\begin{pmatrix} J+C_n& J-C_n\\ J-C_n & J+C_n\end{pmatrix}.
\ee

\subsection{cpsd-matrices with high cpsd-rank}\label{sec:highcpsdrank}

In this  section we give the proof of  Result \ref{res:first}, i.e.,  we show that for any $n\ge 1$ there exists   $X_n\in \CL^{2n}$  such that $\CPSDR(X_n)\ge~2^{\Omega(\sqrt{n})}.$ This follows by combining   Theorem \ref{thm:GLbehaviorlowerbound}  with Theorem \ref{thm:minsizequantumcorrelation}.

\medskip
\begin{theorem}\label{thm:lboundgl}
Fix  $n\ge 1$ and let  $C_n\in \ext (\calE_n)$ with  $\rank(C_n)=r_{\max}(n)$. Then
\be\label{eq:goodglmatrix}
P_{C_n}:=
{1\over 4} \begin{pmatrix}J+C_n & J-C_n\\J-C_n& J+C_n\end{pmatrix}
\ee
is a  $2n\times 2n$ Gram-Lorentz matrix
satisfying
 \be\label{eq:cedfergrth}
\CPSDR(P_{C_n})\ge  {\sqrt{2}^{\lfloor \rmax(n)/ 2\rfloor}}.
\ee
\end{theorem}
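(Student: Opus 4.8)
The plan is to reduce the lower bound on $\CPSDR(P_{C_n})$ to the lower bound on $\mathcal{D}(\pbf_{C_n})$ established in Theorem~\ref{thm:GLbehaviorlowerbound}, using the bridge provided by Theorem~\ref{thm:minsizequantumcorrelation}. First I would verify that $P_{C_n}$, viewed as a $2n\times 2n$ matrix indexed by $(\{\pm1\}\times[n])\cup(\{\pm1\}\times[n])$, genuinely lies in $\CPSD^{2n}\cap\mathcal{A}(\pbf_{C_n})$. That it is Gram-Lorentz (hence cpsd) follows directly from Lemma~\ref{cddfsvdfv}, which exhibits an explicit $\CL$-factorization of $\pbf_{C_n}$ by the Lorentz vectors $\ell^x_a=\tfrac12(1,au_x)$ and $\tilde\ell^y_b=\tfrac12(1,bv_y)$; the Gram matrix of these vectors, after rescaling the block normalization, is exactly $P_{C_n}$ as displayed in \eqref{eq:behaviormatrix}. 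That the membership in the affine space $\mathcal{A}(\pbf_{C_n})$ holds was already checked inside the proof of Lemma~\ref{cddfsvdfv}, since $\ell^x_1+\ell^x_{-1}=\tilde\ell^y_1+\tilde\ell^y_{-1}=e_1$ forces the normalization constraints \eqref{eq:cond1}--\eqref{eq:cond3}, and the off-diagonal block equals $p_C(ab|xy)$ by construction, giving \eqref{eq:cond4}.

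Next I would invoke Theorem~\ref{thm:minsizequantumcorrelation}: since $\mathcal{D}(\pbf_{C_n})=\min\{\CPSDR(R): R\in\CPSD^{2n}\cap\mathcal{A}(\pbf_{C_n})\}$, and $P_{C_n}$ is one particular matrix in that feasible set, we immediately get
\[
\CPSDR(P_{C_n})\ \ge\ \mathcal{D}(\pbf_{C_n}).
\]
Finally, Theorem~\ref{thm:GLbehaviorlowerbound}, applied to the extreme point $C_n\in\ext(\calE_n)$ with $\rank(C_n)=r_{\max}(n)$, gives $\mathcal{D}(\pbf_{C_n})\ge \sqrt{2}^{\lfloor r_{\max}(n)/2\rfloor}$, which chains with the previous inequality to yield \eqref{eq:cedfergrth}. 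The existence of such a $C_n$ is guaranteed by Theorem~\ref{thm:extremepoints}.

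There is essentially no hard technical step left here: all the real work has been front-loaded into Theorem~\ref{thm:GLbehaviorlowerbound} (which rests on Tsirelson's lower bound, Theorem~\ref{thm:mainlowerbound1}) and into the conic-formulation machinery of Theorem~\ref{thm:minsizequantumcorrelation}. The only point requiring a small amount of care is the bookkeeping that identifies $P_{C_n}$ — with its $\tfrac14$ normalization and its $2\times 2$ block structure coming from the $\{\pm1\}$ outcome labels — with a matrix in $\mathcal{A}(\pbf_{C_n})$ in the precise indexing convention of Theorem~\ref{thm:conicformulation}; once that identification is made, the chain of inequalities is immediate. For completeness one should also note that $P_{C_n}$ indeed has the claimed block form \eqref{eq:goodglmatrix}, which is just \eqref{eq:behaviormatrix} specialized to the correlation $(0,0,C_n)$.
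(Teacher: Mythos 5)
Your high-level strategy is exactly the paper's: chain $\CPSDR(P_{C_n})\ge\mathcal{D}(\pbf_{C_n})\ge\sqrt{2}^{\lfloor r_{\max}(n)/2\rfloor}$ via Theorems~\ref{thm:minsizequantumcorrelation} and~\ref{thm:GLbehaviorlowerbound}, with Lemma~\ref{cddfsvdfv} supplying the Gram-Lorentz structure. But there is a genuine gap in the middle step that you have dismissed as bookkeeping.

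For the $(n,n,2,2)$-scenario the matrix $R$ in Theorem~\ref{thm:conicformulation} has size $m_Ao_A+m_Bo_B=4n$, so $\mathcal{A}(\pbf_{C_n})\subseteq\calS^{4n}$ and the minimum in Theorem~\ref{thm:minsizequantumcorrelation} is taken over $\CPSD^{4n}\cap\mathcal{A}(\pbf_{C_n})$, not $\CPSD^{2n}\cap\mathcal{A}(\pbf_{C_n})$ as you write. The $2n\times2n$ matrix $P_{C_n}$ therefore cannot lie in $\mathcal{A}(\pbf_{C_n})$ --- it is only the off-diagonal (Alice--Bob) block of such a matrix, via constraint~\eqref{eq:cond4}. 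Your sentence ``$P_{C_n}$, viewed as a $2n\times 2n$ matrix indexed by $(\{\pm1\}\times[n])\cup(\{\pm1\}\times[n])$'' already betrays the confusion: that index set has $4n$ elements, not $2n$. What is actually needed is a $4n\times 4n$ witness. Because $C_n$ is symmetric, Alice's and Bob's Lorentz vectors from Lemma~\ref{cddfsvdfv} coincide, so the Gram matrix of $\{\ell^x_a\}_{a,x}\cup\{\tilde\ell^y_b\}_{b,y}$ is the block matrix $X_n=\left(\begin{smallmatrix}P_{C_n}&P_{C_n}\\P_{C_n}&P_{C_n}\end{smallmatrix}\right)\in\CPSD^{4n}\cap\mathcal{A}(\pbf_{C_n})$. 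Applying Theorem~\ref{thm:minsizequantumcorrelation} to $X_n$ gives $\CPSDR(X_n)\ge\mathcal{D}(\pbf_{C_n})$, and one must then separately observe that $\CPSDR(X_n)=\CPSDR(P_{C_n})$ (both inequalities are easy: $P_{C_n}$ is a principal submatrix of $X_n$, and repeating a $\CPSD$-factorization of $P_{C_n}$ factors $X_n$). Your proposal never constructs $X_n$, never states the rank identity $\CPSDR(X_n)=\CPSDR(P_{C_n})$, and applies the conic formulation to a matrix of the wrong size; as written, the chain of inequalities does not close.
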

\medskip

\begin{proof}
By Lemma \ref{cddfsvdfv} we get  that ${P_{C_n}\in \CL^{2n}}$.
Furthermore, as   $P_{C_n}\in\CL^{2n}\subseteq  \CPSD^{2n}$ we
have that $X_n := \left(\begin{smallmatrix}P_{C_n} & P_{C_n}\\P_{C_n} & P_{C_n}\end{smallmatrix}\right)\in {\CPSD^{4n}},$ {since the psd matrices in the $\CPSD$-factorization can be repeated}. {Also, we clearly have that $X_n \in \calA(\pbf_{C_n})$. Thus, by Theorem \ref{thm:minsizequantumcorrelation}}, we get   ${\CPSDR(X_n) \geq \mathcal{D}(\pbf_{C_n})}$.  {It is easy to verify that $\CPSDR(X_n) = \CPSDR(P_{C_n})$}. Lastly, since  $C_n\in \ext ({\rm Cor}(n,n))$, by Theorem \ref{thm:GLbehaviorlowerbound} we have $\mathcal{D}(\pbf_{C_n})\geq {\sqrt{2}^{\lfloor \rmax(n)/ 2\rfloor}}$ and the proof is concluded.
\end{proof}

\begin{remark}\label{rem:tightlowerbound}
In Theorem
\ref{upperboundcpsd} we determined that for any $X\in \CL^n$ we have that
$ \CPSDR(X)\le 2^{\lfloor (\rank(X)+1)/2\rfloor}.$ Since {$\rank(P_{C_n})\le \rank(C_n) + 1  $}, this upper bound applied to the matrices $P_{C_n}$ defined in  \eqref{eq:goodglmatrix} shows that  for all $n\ge 1$ we have $  \CPSDR(P_{C_n})\le {2^{\lfloor (\rmax(n)+2)/2\rfloor} = 2^{\lfloor \rmax(n)/2\rfloor + 1}}$.
Thus, in view of \eqref{eq:cedfergrth}, the upper bound on the $\CPSDR$ of $\CL$ matrices given in Theorem \ref{upperboundcpsd} is essentially tight.
\end{remark}
\medskip

Returning  to the example \eqref{ex:explicitexample} from  Section \ref{sec:puttingeverythingtogether} it follows that  $P_{C_n}\in~\CL^{2(2n^2+n)}$ and   $\CPSDR(P_{C_n})\ge {\sqrt{2}^{\lfloor (2n-1)/2 \rfloor}}$.
In particular, Lemma \ref{cddfsvdfv} implies that  the vectors
$$\ell^{ij}_a:=\left({1\over 2},{a \, w_{ij}\over 2}\right), \ a\in \{\pm 1\}, \ 1\le i\le j\le 2n,$$
lie in $\calL_{2n+1}$ and  give a $\CL$-factorization  of $P_{C_n}$ (for the definition of the $w_{ij}$'s see \eqref{cewverger}).  The corresponding $\CPSD$-factorization is given by  the
{psd} matrices $\{\Gamma(\ell^{ij}_a)\}_{ij,a},$~{where}
$$\Gamma{(\ell^{ij}_a)}={1\over 2^{n/2}}\left ({I + a \, \gamma(w_{ij})\over 2}\right)\in \calh_+^{2^n}.$$

\section{cpsd-graphs}\label{sec:cpsdgraphs}

We say that  $G=([n],E)
$  is  a  {\em cpsd-graph}   if for any matrix $X\in~\DNN^n$ whose support is given by $G$, i.e.,  $ S(X)=G$, we have  that  $ X\in \CPSD^n.$
The analogous notion of   {\em cp-graphs}  has been   studied extensively (e.g. see \cite[Section~2.5]{CP}).
In fact, the class of cp-graphs admits an exact characterization: A graph is cp  if and only if  it does not contain an odd cycle $C_{2t+1}$ ($t\ge 2$)  as a subgraph~\cite{KB}. In this section  we show that  the same characterization extends to  {cpsd-graphs} (cf. Theorem~\ref{thm:cpsdgraphs}).

{To arrive at the characterization of cpsd-graphs, we generalize a sufficient condition from \cite{FW} for constructing doubly-nonnegative matrices that are not
cpsd.  As noted in \cite{LP14}, the example of the matrix  in  $\DNN^5\setminus \CPSD^5$ given in \cite{FW} does not admit a Gram factorization  by  positive elements in any finite  von Neumann algebra.  Our   sufficient condition given in Theorem~\ref{thm:dnnminuscpsd} below  generalizes this construction.}

\subsection{$\DNN$ matrices with no  $\mathcal{N}^+$-factorizations}\label{thm:neccondition1}

First, we introduce    some necessary
background   on  von Neumann algebras. We  keep the discussion to a minimum and
 {refer the} interested reader to \cite{Murphy} for a
comprehensive introduction.

A {\em von Neumann algebra}  is a unital  $\ast$-subalgebra of the $C^\ast$-algebra  of bounded operators on a Hilbert space $H$,  that is closed in the weak operator topology.
A von Neumann algebra $\mathcal{N}$  is called {\em tracial}  if it is equipped with
 a linear functional  $\tau: \mathcal{N}\rightarrow~\C$  satisfying:  $(i)$ $\tau(x^*x)\ge0$ for all $x\in \mathcal{N}$ and $\tau(1)=1$ $ (ii)$ $\tau(x^*x)=0\Longrightarrow x=0$ $(iii)$ $\tau(xy)=
\tau(yx),$ for all $x,y\in \mathcal{N}$ and $(iv)$ the restriction of $\tau$ to the unit ball is continuous with respect to the weak operator~topology.

An element $p\in \mathcal{N}$ {is} called {\em positive} if $p=x^\ast x,
$ for some $x\in \mathcal{N}$.   We denote by $\mathcal{N}^+$ the set
of positive elements in $\mathcal{N}$. We  make use of  the  fact
that any  $p\in \mathcal{N}^+$ has  a unique positive  square root
(e.g. see  \cite[Theorem 2.2.1]{Murphy}).

\medskip
\begin{remark}\label{rem:kernelvectorvn}Let $(\mathcal{N},\tau)$ be a tracial  von Neumann algebra.
 Let   $\{x_i\}_{i=1}^n\subseteq~\mathcal{N}$ such that $x_i^\ast=x_i$ for all $i\in [n]$ and set    $X:=(\tau(x_ix_j)_{1\le i,j\le n})$. For any  $u\in {\rm Ker} X$ we have  that $\sum_{i=1}^nx_i u_i=0$. Indeed, note that
\[ 0=u^\ast Xu=\sum_{i,j=1}^n\bar{u}_iu_j\tau(x_ix_j)=\tau {\left( \left( \sum_{i=1}^nu_ix_i \right)^\ast \left( \sum_{i=1}^nu_ix_i \right) \right)}, \]
which by $(ii)$  implies that $\sum_{i=1}^nu_ix_i=0$.

Moreover, if $\tau(pq)=0$ where $p,q$ are positive elements of $\mathcal{N}$ then we have that  $pq=0$. To see this let $p=a^\ast a$ and $q=b^\ast b$ and note that $\tau(pq)=\tau(a^\ast ab^\ast b)=\tau((ab^\ast)^\ast ab^\ast)=0$ which by $(ii)$ implies that  $ ab^\ast=0$.  This shows that $pq=0$.
\end{remark}
\medskip

Let   $(\mathcal{N},\tau)$ be a tracial  von Neumann algebra. We say
that a matrix  $X\in \DNN^n$ admits an {\em
$\mathcal{N}^+$-factorization} if   there exist positive elements
$\{p_i\}_{i=1}^n\subseteq \mathcal{N}^+$ such that
$X=(\tau(p_ip_j)_{1\le i \le j\le n}).$ Next we  give a sufficient
condition for constructing $\DNN$ matrices for which no
$\mathcal{N}^+$-factorization exists, generalizing a construction
from~\cite{FW}.

\medskip
\begin{theorem}\label{thm:dnnminuscpsd}
Consider  nonzero
vectors $\{u_i\}_{i=1}^n\subseteq \R^d$ such that $\la u_i,u_j\ra\ge 0 $ for all $i,j\in [n].$ Assume that there exist  subsets $ I, J\subseteq  [n]$ with the following properties:
\begin{itemize}
\item[$(i)$]  $\Span({\{u_i: i \in I \}})=\Span({\{u_j: j\in J \}})= \Span({\{ u_i: i\in [n] \}});$
\item[$(ii)$] There exists $i^*\in I $ such that $\la u_{i^*}, u_i\ra =0, $ for all $i\in I\setminus \{i^*\}$;
\item[$(iii)$] There exists $j^*\in J $ such that $\la u_{j^*}, u_j\ra =0,$ for all $j\in J\setminus \{j^*\}$;
\item[$(iv)$]  The vector $u_{i^*}$ is not parallel to $ u_{j^*}$;
\item[$(v)$]  We have $\la u_{i^*},u_{j^*}\ra\ne 0$.
\end{itemize}
Then the matrix $\gram(\{u_i\}_{i=1}^n)$ does not admit an $\mathcal{N}^+$-factorization  for any tracial  von Neumann algebra $(\mathcal{N},\tau)$.
\end{theorem}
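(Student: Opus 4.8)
The plan is to argue by contradiction, mimicking the structure of Remark~\ref{rem:kernelvectorvn} and the proof of Theorem~\ref{thm:necessarycondnotcp}, but now inside a tracial von Neumann algebra $(\mathcal{N},\tau)$. Suppose $\{p_i\}_{i=1}^n\subseteq \mathcal{N}^+$ gives an $\mathcal{N}^+$-factorization of $X:=\gram(\{u_i\}_{i=1}^n)$, i.e.\ $\tau(p_ip_j)=\la u_i,u_j\ra$ for all $i,j$. The first step is to transfer the linear dependencies among the $u_i$ to the $p_i$: any linear relation $\sum_i \lambda_i u_i = 0$ implies $\sum_i \lambda_i p_i = 0$. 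This is exactly the content of the first half of Remark~\ref{rem:kernelvectorvn}, applied to the Hermitian (indeed positive) elements $p_i$, since such a relation corresponds to a vector in $\mathrm{Ker}\, X$. Similarly, the second half of that remark gives the orthogonality-to-annihilation principle: if $\la u_i,u_j\ra = 0$ then $\tau(p_ip_j)=0$, hence $p_ip_j = 0$ (and thus also $p_jp_i=0$, since the positive square roots satisfy $ab^* = 0 \iff ba^* = 0$ after taking adjoints).

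The core of the argument then runs as follows. By hypothesis $(i)$, since $\{u_i : i \in I\}$ spans the whole space, and the indices of $\mathcal{F}$ outside $I$ are linear combinations of $\{u_i : i\in I\}$, the transfer principle shows that every $p_k$ is an $\mathbb{R}$-linear combination of $\{p_i : i\in I\}$; in particular every $p_k$ lies in the (commutative, since by $(ii)$ the $p_i$ for $i \in I\setminus\{i^*\}$ pairwise-commute with $p_{i^*}$ via $p_{i^*}p_i = 0$...) — more carefully, the key structural fact I want is that $p_{i^*}$ commutes with every element of $\mathcal{N}$ that arises as such a combination, OR alternatively that $p_{i^*} p_k = c_k\, p_{i^*}$ for suitable scalars. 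Concretely: write $u_k = \sum_{i\in I}\lambda^k_i u_i$; then $p_k = \sum_{i\in I}\lambda^k_i p_i$, so $p_{i^*}p_k = \sum_{i\in I}\lambda^k_i p_{i^*}p_i = \lambda^k_{i^*}\, p_{i^*}^2$, using $(ii)$ to kill all terms with $i\neq i^*$. The scalar $\lambda^k_{i^*}$ is $\la u_k, u_{i^*}\ra / \|u_{i^*}\|^2$. So $p_{i^*} p_k = \dfrac{\la u_k,u_{i^*}\ra}{\|u_{i^*}\|^2}\, p_{i^*}^2$ for every $k\in[n]$. Symmetrically, using $(iii)$ and the spanning of $\{u_j : j\in J\}$, $p_{j^*}p_k = \dfrac{\la u_k, u_{j^*}\ra}{\|u_{j^*}\|^2}\, p_{j^*}^2$ for every $k\in [n]$.

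Now I would extract the contradiction from hypotheses $(iv)$ and $(v)$. Multiply the first identity by $p_{j^*}$ on the left, or compare the two expressions at $k = j^*$ and $k = i^*$: we get $p_{i^*}p_{j^*} = \dfrac{\la u_{j^*},u_{i^*}\ra}{\|u_{i^*}\|^2}\, p_{i^*}^2$ and also $p_{j^*}p_{i^*} = \dfrac{\la u_{i^*},u_{j^*}\ra}{\|u_{j^*}\|^2}\, p_{j^*}^2$. Taking adjoints of the first (both sides are products of positives, and $p_{i^*}^2$ is positive hence self-adjoint) gives $p_{j^*}p_{i^*} = \dfrac{\la u_{i^*},u_{j^*}\ra}{\|u_{i^*}\|^2}\, p_{i^*}^2$, so comparing, $\dfrac{1}{\|u_{i^*}\|^2}\,p_{i^*}^2 = \dfrac{1}{\|u_{j^*}\|^2}\,p_{j^*}^2$ (here we use $(v)$: $\la u_{i^*},u_{j^*}\ra \neq 0$ so it cancels). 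Thus $p_{i^*}$ and $p_{j^*}$ have proportional squares; since a positive element has a \emph{unique} positive square root (cited in the excerpt from \cite[Theorem 2.2.1]{Murphy}), this forces $p_{j^*}$ to be a positive scalar multiple of $p_{i^*}$. But then for every $k$ the two formulas for $p_{i^*}p_k$ and $p_{j^*}p_k$ are proportional with the \emph{same} constant of proportionality as between $p_{i^*}$ and $p_{j^*}$; chasing this through $\tau(p_{i^*}p_k) = \la u_k,u_{i^*}\ra$ and $\tau(p_{j^*}p_k)=\la u_k,u_{j^*}\ra$ and ranging $k$ over a spanning set, we conclude $u_{j^*}$ is the corresponding positive multiple of $u_{i^*}$ — contradicting $(iv)$. (Some care is needed that $p_{i^*}\neq 0$: if $p_{i^*}=0$ then $\|u_{i^*}\|^2 = \tau(p_{i^*}^2)=0$, contradicting $u_{i^*}\neq 0$.)

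The main obstacle I anticipate is handling non-commutativity cleanly when passing between $p_{i^*}p_k$ and $p_k p_{i^*}$ and when taking adjoints: one must be careful that $p_{i^*}^2$ is self-adjoint (true, it is positive) and that the scalars are real (true, inner products of real vectors), and one must correctly invoke the uniqueness of positive square roots to upgrade "$p_{i^*}^2 \propto p_{j^*}^2$" to "$p_{i^*} \propto p_{j^*}$" — this last step is where the tracial/von~Neumann structure (as opposed to, say, arbitrary $C^*$-algebras) is really used, together with positivity of the constant. A secondary subtlety is justifying that the linear-relation transfer from $\{u_i\}$ to $\{p_i\}$ respects \emph{all} relations simultaneously, which is immediate from Remark~\ref{rem:kernelvectorvn} since $\mathrm{Ker}\,X$ is spanned by such relation vectors.
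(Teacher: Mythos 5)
Your proof is correct and follows essentially the same route as the paper's: transfer linear relations and orthogonality-to-annihilation via Remark~\ref{rem:kernelvectorvn}, use $(ii)$/$(iii)$ and $(i)$ to pin $p_{i^*}p_{j^*}$ into both $\Span(\{p_{i^*}^2\})$ and $\Span(\{p_{j^*}^2\})$, invoke $(v)$ to get a nonzero (in fact positive) constant of proportionality, apply uniqueness of positive square roots, and contradict $(iv)$. You actually spell out two small points that the paper leaves implicit, namely the explicit coefficient $\la u_k,u_{i^*}\ra/\|u_{i^*}\|^2$ (which requires noting $u_{i^*}\notin\Span\{u_i:i\in I\setminus\{i^*\}\}$, itself a consequence of $(ii)$ and $u_{i^*}\neq 0$) and the final step of passing from $p_{i^*}\propto p_{j^*}$ back to $u_{i^*}\propto u_{j^*}$ by pairing against the spanning family $\{u_k\}$ via $\tau$.
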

\medskip

\begin{proof}
Let $(\mathcal{N},\tau)$ be a tracial  von Neumann algebra  and    let $\gram(\{u_i\}_{i=1}^n)=(\tau(p_ip_j)_{1\le i
\le j\le n}),$ for some positive elements $\{p_i\}_{i=1}^n\subseteq
\mathcal{N}$.  By~$(i)$  we have that $u_{i^*}\in \Span( \{u_j: j\in
J\}) $ so   Remark \ref{rem:kernelvectorvn} implies    that
$p_{i^*}\in \Span( \{p_j: j\in J\}). $ Pre-multiplying  this by
$p_{j^*}$, it follows from  $(iii)$ that $p_{j^*}p_{i^*}\in \Span(
\{p_{j^*}^2\})$, {where we have utilized the fact that
$\tau(p_{j^*}p_{i^*})=0$ implies $p_{j^*}p_{i^*}=0$}.
Analogously,   $(i)$ implies  that $p_{j^*}\in \Span(\{p_i: i\in I\}) $ and post-multiplying   by~$p_{i^*}$ we get from $(ii)$ that $p_{j^*}p_{i^*}\in \Span( \{ p_{i^*}^2\})$.
By $(v)$ we get  $p_{j^*}p_{i^*}\ne 0$ and  combining the two equations,
there exists a  scalar {$c \neq 0$} such that $ p_{i^*}^2=cp_{j^*}^2$.
 {Also note that $c > 0$ since
 $0 < \tau(p_{i^*}^2) = c \tau (p_{j^*}^2)$ and $\tau (p_{j^*}^2) > 0$.}
 Since each positive element of a $C^\ast$-algebra has a unique positive {square} root  we have  $p_{i^*}=\sqrt{c}p_{j^*}.$ This  contradicts~$(iv)$.
 \end{proof}
\medskip

Based on  Theorem \ref{thm:dnnminuscpsd},  we now give  a family of $\DNN$ matrices supported by $C_{2t+1}$ (for all $t\ge 1$) that  do not  admit a Gram factorization  with positive elements in any   tracial  von Neumann algebra.

\medskip
\begin{lemma}\label{lem:oddcycles}
Let $A_t$ denote  the adjacency matrix of $C_{2t+1}$, {($t \geq
2$)}, and  let  $\lambda_t$ {{be its} least   eigenvalue}. The
matrix  $A_t-\lambda_t I $ is doubly-nonnegative, its support is
$C_{2t+1}$, and it does not  admit {an}
$\mathcal{N}^+$-factorization for any tracial  von Neumann
algebra~$(\mathcal{N},\tau)$.
\end{lemma}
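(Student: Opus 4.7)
The plan is to verify the three claims in turn; the first two are immediate, and the main content is ruling out $\mathcal{N}^+$-factorizations via Theorem~\ref{thm:dnnminuscpsd}. The eigenvalues of $A_t$ are $2\cos(2\pi k/(2t+1))$ for $k=0,\ldots,2t$, so $\lambda_t = -2\cos(\pi/(2t+1))$, which for $t\ge 2$ satisfies $-\lambda_t > 1 > 0$. Consequently $A_t-\lambda_t I$ has diagonal equal to $-\lambda_t>0$ and off-diagonals agreeing with $A_t$ (all in $\{0,1\}$), hence it is entrywise nonnegative with support $C_{2t+1}$; it is psd by the very choice of $\lambda_t$.

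For the factorization claim, I would first record that $\lambda_t = 2\cos(2\pi t/(2t+1)) = 2\cos(2\pi(t+1)/(2t+1))$ has multiplicity exactly two among the eigenvalues of $A_t$, so $\rank(A_t-\lambda_t I) = 2t-1$. Fix any Gram factorization $\{u_i\}_{i=1}^{2t+1}\subseteq \R^{2t-1}$ of $A_t-\lambda_t I$, choose an edge $i^*\sim j^*$ in $C_{2t+1}$, and set
$$
I:=\{i^*\}\cup\{i\ne i^*:i\not\sim i^*\}, \qquad J:=\{j^*\}\cup\{j\ne j^*:j\not\sim j^*\},
$$
each of cardinality $2t-1$. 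Conditions $(ii)$, $(iii)$, and $(v)$ of Theorem~\ref{thm:dnnminuscpsd} follow at once from $\la u_i,u_j\ra = (A_t-\lambda_t I)_{ij}$ together with the choice of $I$, $J$, $i^*$, $j^*$. For $(iv)$, the bound $\|u_{i^*}\|\,\|u_{j^*}\| = -\lambda_t > 1 = \la u_{i^*},u_{j^*}\ra$ shows that Cauchy--Schwarz is strict, so $u_{i^*}$ and $u_{j^*}$ are not parallel.

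The main obstacle is condition $(i)$: that $\Span(\{u_i:i\in I\})$ equals the full span of the $u_i$, equivalently that the principal submatrix $(A_t-\lambda_t I)_{I,I}$ is nonsingular. By construction, $i^*$ is isolated in the subgraph of $C_{2t+1}$ induced by $I$ (both of its cycle-neighbors have been excluded), while the remaining $2t-2$ vertices of $I$ form a path $P_{2t-2}$ along the cycle. Hence $(A_t-\lambda_t I)_{I,I}$ is block-diagonal, equal to $[-\lambda_t]\oplus (A(P_{2t-2})-\lambda_t I)$, and nonsingularity reduces to showing that $\lambda_t$ is not an eigenvalue of $A(P_{2t-2})$, whose spectrum is $\{2\cos(k\pi/(2t-1)):1\le k\le 2t-2\}$. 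The equation $-\cos(\pi/(2t+1))=\cos(k\pi/(2t-1))$ rearranges to $k(2t+1)=2t(2t-1)$; using $2t\equiv -1\pmod{2t+1}$ one gets $2t(2t-1)\equiv 2\pmod{2t+1}$, which would force $(2t+1)\mid 2$, impossible for $t\ge 2$. The same reasoning handles $J$ by symmetry, completing the verification of the hypotheses of Theorem~\ref{thm:dnnminuscpsd} and hence the lemma.
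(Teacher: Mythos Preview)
Your proof is correct and follows essentially the same approach as the paper: apply Theorem~\ref{thm:dnnminuscpsd} with $I$ and $J$ obtained by deleting the two cycle-neighbors of adjacent vertices $i^*$, $j^*$, so that each index set has size $2t-1$ and the distinguished vertex becomes isolated in the induced subgraph. The paper simply asserts condition~$(i)$ (that $\dim\Span\{u_i:i\in I\}=2t-1$), whereas your path-eigenvalue computation actually supplies the missing justification that the relevant $(2t-1)\times(2t-1)$ principal submatrix is nonsingular.
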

\medskip

\begin{proof}
Set $n:=2t+1$ and $X:=A_t-\lambda_t I$. Clearly, $X\in\DNN^n$ and $S(X)=C_{2t+1}$. Note that  $\lambda_t=2\cos({2\pi t\over 2t+1})$ with  {multiplicity $2$}. In particular  $\rank(X)=n-2.$   Let $X=\gram(\{u_i\}_{i=1}^n)$ {where $\{u_i\}_{i=1}^n \subseteq \R^{n-2}$} and {$\Span({\{ u_i\}_{i=1}^n})=\R^{n-2}$}.  We show that the assumptions  of Theorem \ref{thm:dnnminuscpsd} are
satisfied for  $I:=[n] \setminus \{ 2, n\}$ and $J:=[n] \setminus \{1,3\}$.
For $(i)$ note that {$\dim(\Span(\{u_i: i \in
I\}))=\dim(\Span(\{u_j: j \in J\}))=n-2$} and since $\{u_i\}_{i=1}^n \subseteq \R^{n-2}$,
we have $\Span(\{u_i: i \in
I\}))=\dim(\Span(\{u_j: j \in J\})$.
 Moreover, setting    $i^*:=1$
and $j^*: = 2, $  we see that $(ii)$ and $(iii)$ are satisfied. For
$(iv)$ note that  $\det \left(X[1,2]\right)=\lambda_t^2-1\ne 0$,
{where $X[1,2]$ denotes  the principal  submatrix of $X$ corresponding  to  the first two
rows and columns}. Lastly, $(v)$ holds as $\la u_1, u_2\ra=~{+1}$.
\end{proof}
\medskip

\begin{remark}\label{rem:oddcycles}
It was shown  in \cite{BLP}   that there exists a tracial  von
Neumann algebra $(\mathcal{N},\tau)$ such that  any element in the
closure of \  $\CPSD$ admits {an} $\mathcal{N}^+$-factorization.
Consequently, the matrices $A_t-\lambda_t I $ constructed in Lemma
\ref{lem:oddcycles}  are doubly-nonnegative and do not belong to the
closure of $\CPSD$.
\end{remark}

\subsection{Characterizing cpsd-graphs}\label{sec:whatever}

Using the family of matrices constructed in Lemma~\ref{lem:oddcycles} we are   now  ready to complete  our characterization of cpsd-graphs.

\medskip
\begin{theorem}\label{thm:cpsdgraphs}
A graph is cpsd  if and only if  it has no $C_{2t+1}$-subgraph $(t\ge 2)$.
\end{theorem}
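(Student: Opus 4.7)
My plan is to prove the two directions separately. For the ``if'' direction, I would invoke the Kogan-Berman characterization~\cite{KB} recalled just before the theorem: if $G$ has no $C_{2t+1}$-subgraph for $t\ge 2$, then $G$ is a cp-graph, so every $X\in\DNN^n$ with $S(X)=G$ satisfies $X\in \CP^n\subseteq \CPSD^n$, and hence $G$ is cpsd. The bulk of the work is the converse: assuming $G=([n],E)$ contains $C_{2t+1}$ as a subgraph for some $t\ge 2$, I need to exhibit an $X\in \DNN^n$ with $S(X)=G$ and $X\notin \CPSD^n$.

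After relabeling vertices, I may assume the offending cycle sits on $V':=\{1,\ldots,2t+1\}$. Let $A_t$ be the adjacency matrix of this cycle and $\lambda_t$ its smallest eigenvalue, and define
\[
M:=(A_t-\lambda_t I_{2t+1})\oplus 0_{n-2t-1}\in \DNN^n,
\]
whose support is exactly $E(C_{2t+1})$. The central claim I plan to establish is that $M\notin \cl(\CPSD^n)$. For this, I would combine Lemma~\ref{lem:oddcycles} and Remark~\ref{rem:oddcycles} (which together give $A_t-\lambda_t I\notin \cl(\CPSD^{2t+1})$) with the elementary fact that both $\CPSD^n$ and $\cl(\CPSD^n)$ are closed under taking principal submatrices: restricting a $\CPSD$-factorization $\{P_i\}_{i=1}^n$ to indices in $V'$ yields a $\CPSD$-factorization of the submatrix $M[V']=A_t-\lambda_t I$, and this passes to limits by continuity of $X\mapsto X[V']$.

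Next, I would upgrade the support of $M$ from $E(C_{2t+1})$ to all of $E(G)$ via a small psd perturbation that simultaneously fills in the missing edges and supplies positive diagonal entries for the isolated vertices $\{2t+2,\ldots,n\}$. Concretely, set
\[
Q:=\sum_{\{u,v\}\in E(G)\setminus E(C_{2t+1})}(e_u+e_v)(e_u+e_v)^\sfT+\sum_{i=2t+2}^n e_ie_i^\sfT,
\]
which is manifestly psd and entrywise nonnegative, and let $M_\epsilon:=M+\epsilon Q$. A short case analysis (original cycle edges contribute $1$, new edges contribute $\epsilon$, non-edges remain $0$, diagonal entries come either from $-\lambda_t>0$ or from $\epsilon Q_{ii}>0$) shows that $M_\epsilon\in \DNN^n$ with $S(M_\epsilon)=G$ for every $\epsilon>0$. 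Since $\cl(\CPSD^n)$ is closed and does not contain $M$, and $M_\epsilon\to M$ as $\epsilon\to 0$, for all sufficiently small $\epsilon>0$ we have $M_\epsilon\notin\cl(\CPSD^n)\supseteq\CPSD^n$, producing the desired witness.

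The main obstacle I foresee is precisely the closure-level claim $M\notin \cl(\CPSD^n)$, rather than the weaker $M\notin \CPSD^n$: only the former gives an open neighborhood of $M$ disjoint from $\CPSD^n$, which is what allows the perturbed matrices $M_\epsilon$ to remain non-cpsd. This is exactly why the proof leans on the stronger $\mathcal{N}^+$-factorization obstruction of Lemma~\ref{lem:oddcycles} combined with the von Neumann algebra description of $\cl(\CPSD)$ from~\cite{BLP} quoted in Remark~\ref{rem:oddcycles}; with this in hand, the remaining arguments (support bookkeeping, psd-ness of $Q$, and the continuity step) are routine.
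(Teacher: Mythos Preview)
Your proof is correct and follows essentially the same strategy as the paper's: embed $A_t-\lambda_t I$ as a principal block of an $n\times n$ matrix, use Lemma~\ref{lem:oddcycles} together with Remark~\ref{rem:oddcycles} and the closure of $\cl(\CPSD)$ under principal submatrices to conclude this block matrix lies outside $\cl(\CPSD^n)$, and then perturb within $\DNN^n$ to adjust the support to $G$, invoking openness of the complement of $\cl(\CPSD^n)$. The only difference is the perturbation: the paper uses $\tilde{X}+aI+bA_G$ with $b$ chosen small enough relative to $a$ to preserve positive semidefiniteness, whereas your $Q$ is a sum of rank-one psd matrices and hence manifestly psd, which makes the psd verification slightly cleaner and also handles the case $G=C_{2t+1}$ uniformly.
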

\medskip

\begin{proof}
Consider a graph $G$ and {suppose it} has no $C_{2t+1}$-subgraph  for all $t \ge2$. Then $G$ is a cp-graph and thus, also a cpsd-graph.  Conversely, consider a graph $G$ that contains a $C_{2t+1}$-subgraph, for some  $t\ge 2$. We  show that $G$ is not a cpsd-graph.  First, {suppose} that $G=C_{2t+1}$ for some $t\ge 2$. It follows from {Lemma \ref{lem:oddcycles} and} Remark~\ref{rem:oddcycles} that  odd cycles of length at least 5 are not {cpsd-graphs} so we are done. Next {suppose} that $G=([n],E)$ contains $C_{2t+1}$ (for some $t\ge 2$) as a  proper subgraph. {Let $A_t$ and $\lambda_t$ be as  in Lemma \ref{lem:oddcycles}. Recall that {$X=A_t-\lambda_t I \in \DNN\setminus {\rm cl}(\CPSD)$}}. Let $\tilde{X}$ be the $n\times n$ matrix {whose principal submatrix corresponding to the vertices of $C_{2t+1}$  is given by $X$}, and all other entries are equal to $0$.
 For any $a>0$, since $\tilde{X}+aI$ is positive definite, we can find $0<b<a$ such that $X_a:=\tilde{X}+aI+bA_G\in \DNN$, {where $A_G$ is the adjacency
matrix of $G$}. 
{By a continuity argument we see  that ${\rm cl}(\CPSD)$ is closed under taking principal
submatrices}.  {Thus}, as  $\lim_{a\rightarrow 0}X_a=\tilde{X}$ and
$\tilde{X}\not \in {\rm cl}(\CPSD)$, there exists  $a^*>0$ such that
$X_{a^*}\in \DNN\setminus  {\rm cl}(\CPSD)$.  In particular, we have
that $ X_{a^*}\in \DNN\setminus  \CPSD$. As $S(X_{a^*})=G$, it
follows that  $G$ is not a cpsd-graph. 
\end{proof}

\section*{Acknowledgments}

{We} thank Hamza Fawzi for bringing to our attention  reference  \cite{FW}.
A.V., A.P., and Z.W. are supported in part by the
Singapore National Research Foundation under NRF RF Award
No.~NRF-NRFF2013-13. {J.S. is supported in part by NSERC Canada. Research at the Centre for Quantum Technologies at the National University of Singapore is partially funded by the Singapore Ministry of Education
and the National Research Foundation, also through the Tier 3 Grant ``Random numbers from
quantum processes,'' (MOE2012-T3-1-009).}

\bibliographystyle{siamplain}
\bibliography{biblio}

\begin{thebibliography}{10}

\bibitem{B64}
{\sc J.~S. Bell}, {\em On the {E}instein {P}odolsky {R}osen paradox}, Physics,
  1 (1964), pp.~195--200.

\bibitem{B66}
{\sc J.~S. Bell}, {\em On the problem of hidden variables in quantum
  mechanics}, Rev. Mod. Phys., 38 (1966).

\bibitem{CP}
{\sc A.~Berman and N.~Shaked-Monderer}, {\em Completely Positive Matrices},
  World Scientific, 2003.

\bibitem{BSU}
{\sc I.~N. Bomze, W.~Schachinger, and R.~Ullrich}, {\em New lower bounds and
  asymptotics for the cp-rank}, SIAM. J. Matrix Anal. A., 36 (2015),
  pp.~20--37.

\bibitem{Brunner14}
{\sc N.~Brunner, D.~Cavalcanti, S.~Pironio, V.~Scarani, and S.~Wehner}, {\em
  Bell nonlocality}, Rev. Mod. Phys., 86 (2014), p.~419.

\bibitem{Bur07}
{\sc S.~Burer}, {\em On the copositive representation of binary and continuous
  nonconvex quadratic programs}, Math. Program. Ser. A, 120 (2009),
  pp.~479--495.

\bibitem{BLP}
{\sc S.~Burgdorf, M.~Laurent, and T.~Piovesan}, {\em On the closure of the
  completely positive semidefinite cone and linear approximations to quantum
  colorings.}
\newblock arXiv:1502.02842, 2015.

\bibitem{DL}
{\sc M.~Deza and M.~Laurent}, {\em Geometry of cuts and metrices}, Springer,
  1997.

\bibitem{DJL}
{\sc J.~Drew, C.~Johnson, and R.~Loewy}, {\em Completely positive matrices
  associated with {M}-matrices}, Linear Multilinear A., 37 (1994),
  pp.~303--310.

\bibitem{FGPRT}
{\sc H.~Fawzi, J.~Gouveia, P.~Parrilo, R.~Z. Robinson, and R.~Thomas}, {\em
  Positive semidefinite rank}, Math. Program., 153 (2015), pp.~133--177.

\bibitem{FMPTW}
{\sc S.~Fiorini, S.~Massar, S.~Pokutta, H.~Tiwary, and R.~de~Wolf}, {\em
  Exponential lower bounds for polytopes in combinatorial optimization}, J.
  ACM, 62 (2015), pp.~1--23.

\bibitem{FW}
{\sc P.~E. Frenkel and M.~Weiner}, {\em On vector configurations that can be
  realized in the cone of positive matrices}, Linear Alg. Appl., 459 (2014),
  pp.~465--474.

\bibitem{GW}
{\sc R.~Goodman and N.~R. Wallach}, {\em Symmetry, Representations, and
  Invariants}, Springer, 2009.

\bibitem{GPT}
{\sc J.~Gouveia, P.~Parrilo, and R.~Thomas}, {\em Lifts of convex sets and cone
  factorizations}, Math. Oper. Res., 38 (2013), pp.~248--264.

\bibitem{GPW}
{\sc R.~Grone, S.~Pierce, and W.~Watkins}, {\em Extremal correlation matrices},
  Linear Alg. Appl., 132 (1990), pp.~63--70.

\bibitem{HJ}
{\sc R.~A. Horn and C.~R. Johnson}, {\em Matrix Analysis}, Cambridge University
  Press, 1985.

\bibitem{JSWZ}
{\sc R.~Jain, Y.~Shi, Z.~Wei, and S.~Zhang}, {\em Efficient protocols for
  generating bipartite classical distributions and quantum states.}, {IEEE}
  Trans. Inform. Theory, 59 (2013), pp.~5171--5178.

\bibitem{Ji13}
{\sc Z.~Ji}, {\em Binary constraint system games and locally commutative
  reductions}.
\newblock arXiv:1310.3794, 2013.

\bibitem{KB}
{\sc N.~Kogan and A.~Berman}, {\em Characterization of completely positive
  graphs}, Discrete Math., 114 (1993), pp.~298--304.

\bibitem{Las12}
{\sc J.~B. Lasserre}, {\em New approximations for the cone of copositive
  matrices and its dual}, Mathematical Programming, 144 (2013), pp.~265--276.

\bibitem{LP14}
{\sc M.~Laurent and T.~Piovesan}, {\em Conic approach to quantum graph
  parameters using linear optimization over the completely positive
  semidefinite cone}, SIAM J. Optimiz., 25 (2015), pp.~2461--2493.
\newblock 

\bibitem{WL}
{\sc T.~Lee and Z.~Wei}, {\em The square root rank of the correlation polytope
  is exponential.}
\newblock arXiv:1411.6712, 2014.

\bibitem{LT94}
{\sc C.-K. Li and B.-S. Tam}, {\em A note on extremal correlation matrices},
  SIAM. J. Matrix Anal. A., 15 (1994), pp.~903--908.

\bibitem{MM61}
{\sc J.~E. Maxfield and H.~Minc}, {\em On the matrix equation {$X'X = A$}}, P.
  Edinburgh Math. So. (Series 2), 13 (1962), pp.~125--129.

\bibitem{Murphy}
{\sc G.~J. Murphy}, {\em $C^\ast$-Algebras and Operator Theory}, Academic
  Press, 1990.

\bibitem{NC00}
{\sc M.~A. Nielsen and I.~L. Chuang}, {\em Quantum computation and quantum
  information}, Cambridge University Press, 2000.

\bibitem{Pal10}
{\sc K.~F. P\'al and T.~V\'ertesi}, {\em Maximal violation of a bipartite
  three-setting, two-outcome {B}ell inequality using infinite-dimensional
  quantum systems}, Phys.~Rev.~A, 82 (2010), p.~022116.

\bibitem{Par00}
{\sc P.~A. Parrilo}, {\em Structured semidefinite programs and semialgebraic
  geometry methods in robustness and optimization}, PhD thesis, California
  Institute of Technology, 2000.

\bibitem{R14b}
{\sc D.~E. Roberson}, {\em Conic formulations of graph homomorphisms}, J.
  Algebr. Comb.,  (2016), pp.~1--37.
\newblock 

\bibitem{SV}
{\sc J.~Sikora and A.~Varvitsiotis}, {\em Linear conic formulations for
  two-party correlations and values of nonlocal games}.
\newblock arXiv:1506.07297, 2015.

\bibitem{SVW15}
{\sc J.~Sikora, A.~Varvitsiotis, and Z.~Wei}, {\em On the minimum dimension of
  a {H}ilbert space needed to generate a quantum correlation}.
\newblock arXiv:1507.00213, 2015.

\bibitem{S11}
{\sc W.~Slofstra}, {\em Lower bounds on the entanglement needed to play xor
  non-local games}, J. Math. Phys., 52 (2011), p.~102202.

\bibitem{TS87}
{\sc B.~S. Tsirelson}, {\em Quantum analogues of the {B}ell inequalities: The
  case of two spatially separated domains}, Journal of Soviet Mathematics, 36
  (1987), pp.~557--570.

\bibitem{VP09}
{\sc T.~V\'ertesi and K.~F. P\'al}, {\em Bounding the dimension of bipartite
  quantum systems}, Phys. Rev. A, 79 (2009), p.~042106.

\bibitem{Yan}
{\sc M.~Yannakakis}, {\em Expressing combinatorial optimization problems by
  linear programs}, J. Comput. System Sci., 43 (1991), pp.~441--466.

\end{thebibliography}

\appendix

\section{Clifford algebras}\label{sec:Clifford}
Our goal in  this section is to briefly introduce  Clifford algebras. For additional details the reader is referred to  \cite[Chapter 6]{GW}.

Consider a  real vector space $V$ equipped with a    bilinear form $\beta: V\times V\rightarrow \R$ such that $(i)$ $\beta(x,y)=\beta(y,x), \forall x,y\in V$ and $(ii)$  $\beta$ is {\em non-degenerate}, i.e., $\forall x\in V, \beta(x,y)=0\Longrightarrow y=0
$.  A {\em Clifford algebra} for $(V,\beta )$ consists of  a real  unital associative  algebra denoted  ${\rm Cl}(V,\beta)$  together with a linear map $e: V \rightarrow {\rm Cl}(V,\beta)$   satisfying:
\vspace{0.1cm}

\bi
\item[$(i)$] ${e(u)e(v)+e(v)e(u)}=\beta( u,v)1, $ for all  $u,v\in V$;
\vspace{0.1cm}
\item[$(ii)$] ${\rm Cl}(V,\beta)$ is generated by {$e(V)$} as an algebra;
\vspace{0.1cm}
\item[$(iii)$] Given a    real unital associative  algebra $\mathcal{A}$ and  linear map $f: V \rightarrow \mathcal{A}$ satisfying $$f(u)f(v)+f(v)f(u)=\beta(u,v)
 1, \text{ for all } u,v\in V,$$ there exists a unique algebra homomorphism $h: {\rm Cl}(V,\beta) \rightarrow \mathcal{A} $ where~$f=h\circ e$.
\ei
\vspace{0.1cm}

A Clifford algebra  for $(V,\beta)$  can be explicitly defined as the quotient algebra $\mathcal{T}(V)/\mathcal{I}(V),$ where $\mathcal{T}(V):=\oplus_{k\ge 0 }V^{\otimes k}$ is the tensor algebra over $V$ and $\mathcal{I}(V)$ is the two-sided ideal in $\mathcal{T}(V)$  generated by the elements of the form  $u\otimes v+v\otimes u-\beta(u,v)1,$ for all $u,v\in V$. 
Any two algebras satisfying conditions $(i), (ii), (iii)$ above are isomorphic. Thus we refer to ${\rm Cl}(V,\beta)$  as {\em the} Clifford algebra over~$V$.

A {\em representation} of an associative algebra $\mathcal{A}$
consists of a vector space $W$ together with an algebra homomorphism
$\Gamma: \mathcal{A} \rightarrow {\rm End}(W)$, i.e., a linear map
preserving multiplication and the unit element, {where ${\rm
End}(W)$ is the set of all endomorphisms of $W$}. The {\em
dimension} of a representation $(\Gamma,W)$ is the dimension of $W$
as a vector space.   A {\em subrepresentation} of a representation
$(\Gamma,W)$ is a subspace $U\subseteq W$ such that
$\Gamma(a)(U)\subseteq U$, for all $a\in \mathcal{A}$.    A
representation is called {\em irreducible} if its only
subrepresentations are itself and the trivial vector space.
 
It is well-known that the irreducible representations of ${\rm Cl}(V,\beta)$ have exponential size in terms of the dimension of $V$. This  is the source of {our exponential lower bound} in this paper.  Specifically, it is known that:

\medskip
\begin{theorem}\label{thm:cliffrepresentations}
Let $\beta$ be a nondegenerate bilinear form on $V$.
\bi
\item[$(i)$] If  $\dim V=2\ell $ then  (up to isomorphism) there exists a unique irreducible representation of ${\rm Cl}(V,\beta)$ {which} has dimension $2^\ell$;
\item[$(ii)$] If  $\dim V=2\ell +1$ then there exist two {nonisomorphic} irreducible representations of ${\rm Cl}(V,\beta)$. Both representations have dimension $2^\ell$.
\ei
\end{theorem}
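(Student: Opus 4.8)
The plan is to prove the stronger structural statement that, working over $\C$ (i.e.\ interpreting ``representation'' as complex representation, equivalently passing to the complexification ${\rm Cl}_\C(V):={\rm Cl}(V,\beta)\otimes_\R\C$ when $V$ is a real space, as in Theorem \ref{thm:cliffordalgebramain}), one has algebra isomorphisms ${\rm Cl}_\C(V)\cong M_{2^\ell}(\C)$ when $\dim V=2\ell$ and ${\rm Cl}_\C(V)\cong M_{2^\ell}(\C)\oplus M_{2^\ell}(\C)$ when $\dim V=2\ell+1$; the theorem then follows from the elementary module theory of matrix algebras. Since all nondegenerate symmetric bilinear forms on an $n$-dimensional complex space are equivalent, ${\rm Cl}_\C(V)$ depends only on $n$, so I may fix $\beta$ so that $\beta(e_i,e_j)=2\delta_{ij}$ on an orthonormal basis — the normalization under which the map $\gamma$ of Theorem \ref{thm:cliffordalgebramain} satisfies the Clifford relations. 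One preliminary bound is needed: choosing an orthogonal basis of $V$ and using relation $(i)$ to reorder generators in a product and to replace squares of generators by scalars, the $2^n$ ordered monomials $e_S:=\prod_{i\in S}e(v_i)$ ($S\subseteq[n]$, factors in increasing order) span ${\rm Cl}_\C(V)$, so $\dim_\C{\rm Cl}_\C(V)\le 2^n$.

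For the even case $n=2\ell$, the matrices $\gamma(e_i)$ satisfy the defining Clifford relations, so by the universal property $(iii)$ the map $\gamma$ extends to an algebra homomorphism $\widehat\gamma\colon{\rm Cl}_\C(V)\to M_{2^\ell}(\C)$, and I would show $\widehat\gamma$ is an isomorphism. Because $\dim_\C M_{2^\ell}(\C)=4^\ell=2^{2\ell}\ge\dim_\C{\rm Cl}_\C(V)$, it suffices to prove $\widehat\gamma$ surjective, for which I would show the $2^{2\ell}$ elements $\widehat\gamma(e_S)$ are linearly independent in $M_{2^\ell}(\C)$ with the inner product $\langle P,Q\rangle=\tr(P^*Q)$. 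Here each $\gamma(e_i)$ is Hermitian with $\gamma(e_i)^2=I$ (as $\|e_i\|=1$, by Theorem \ref{thm:cliffordalgebramain}$(ii)$), hence every $\widehat\gamma(e_S)$ is a unitary involution and $\widehat\gamma(e_S)^*\widehat\gamma(e_S)=I$; and for $S\ne S'$, the product $\widehat\gamma(e_S)^*\widehat\gamma(e_{S'})$ reduces, using the anticommutation of distinct generators together with $\gamma(e_i)^2=I$, to $\pm\widehat\gamma(e_T)$ with $T=S\triangle S'\ne\emptyset$, and $\tr(\widehat\gamma(e_T))=0$ because conjugating $\widehat\gamma(e_T)$ by a single well-chosen generator $\gamma(e_i)$ — one with $i\in T$ if $|T|$ is even, one with $i\notin T$ (which exists since then $T\ne[n]$) if $|T|$ is odd — negates it. Thus the $\widehat\gamma(e_S)$ are orthogonal, hence a basis, so $\widehat\gamma$ is onto and therefore an isomorphism. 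Finally, $M_{2^\ell}(\C)$ is a simple algebra whose minimal left ideals are all isomorphic as modules to the column space $\C^{2^\ell}$, and any simple module is a quotient of the regular module, hence isomorphic to a minimal left ideal; so there is exactly one simple ${\rm Cl}_\C(V)$-module up to isomorphism, of dimension $2^\ell$, which is $(i)$.

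For the odd case $n=2\ell+1$, I fix an orthogonal basis $v_1,\dots,v_{2\ell+1}$ and put $\omega:=e(v_1)\cdots e(v_{2\ell+1})$. A routine sign count (using that $2\ell+1$ is odd) shows $\omega$ commutes with every generator, hence is central; and $\omega^2$ is a nonzero scalar, so after rescaling $\omega$ by a complex square root we may assume $\omega^2=1$. Then $p_\pm:=\tfrac12(1\pm\omega)$ are central orthogonal idempotents with $p_++p_-=1$, giving a decomposition ${\rm Cl}_\C(V)={\rm Cl}_\C(V)p_+\oplus{\rm Cl}_\C(V)p_-$ into a product of algebras. The subalgebra $A$ generated by $e(v_1),\dots,e(v_{2\ell})$ is a nonzero homomorphic image of ${\rm Cl}_\C\big(\Span\{v_1,\dots,v_{2\ell}\}\big)\cong M_{2^\ell}(\C)$ (the even case), hence $A\cong M_{2^\ell}(\C)$ by simplicity; the algebra map $a\mapsto ap_\pm$, $A\to{\rm Cl}_\C(V)p_\pm$, is injective (it is nonzero, $1\mapsto p_\pm\ne0$, and $A$ is simple), so $\dim_\C{\rm Cl}_\C(V)p_\pm\ge 2^{2\ell}$, which combined with $\dim_\C{\rm Cl}_\C(V)\le 2^{2\ell+1}$ forces ${\rm Cl}_\C(V)\cong M_{2^\ell}(\C)\oplus M_{2^\ell}(\C)$. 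A simple module over a product $B_1\oplus B_2$ is a simple module over exactly one factor, so ${\rm Cl}_\C(V)$ has precisely two simple modules $W_+,W_-$, on which the central element $\omega$ acts by the scalars $+1$ and $-1$ respectively; each has dimension $2^\ell$, and they are nonisomorphic since $\omega$ acts on them by distinct scalars. This is $(ii)$.

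The only step that calls for genuine (if routine) work is the even case — concretely, the verification that $\widehat\gamma$ maps ${\rm Cl}_\C(V)$ onto the full matrix algebra, i.e.\ that the $\widehat\gamma(e_S)$ are linearly independent. Everything downstream of the identifications ${\rm Cl}_\C(V)\cong M_{2^\ell}(\C)$ and $M_{2^\ell}(\C)\oplus M_{2^\ell}(\C)$ is the standard Artin--Wedderburn theory of semisimple algebras over $\C$ and could simply be cited. Alternatively, one could close the even case by proving directly that the defining module $\C^{2^\ell}$ is an irreducible ${\rm Cl}_\C(V)$-module and invoking the Jacobson density theorem to get surjectivity of $\widehat\gamma$; this variant still rests on the same anticommutation bookkeeping.
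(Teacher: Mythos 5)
The paper itself offers no proof of this statement --- it simply cites \cite[Theorem 6.1.3]{GW} --- so there is nothing to compare against line by line; what you have written is essentially the standard structure-theoretic proof (complexify, identify ${\rm Cl}_\C(V)$ with $M_{2^\ell}(\C)$ or $M_{2^\ell}(\C)\oplus M_{2^\ell}(\C)$, then read off the simple modules), which is also the route taken in the cited source. Your even case is complete and correct: the orthogonality of the monomials $\widehat\gamma(e_S)$ via the trace-zero/conjugation argument gives surjectivity, and together with the spanning bound $\dim_\C{\rm Cl}_\C(V)\le 2^n$ it yields the isomorphism with $M_{2^\ell}(\C)$, and hence simultaneously the lower bound $\dim_\C{\rm Cl}_\C(V)=2^n$. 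Your reading of ``representation'' as complex representation is also the right one for how the theorem is used in Appendix B.

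There is one genuine gap, in the odd case: you assert $p_\pm=\tfrac12(1\pm\omega)\ne 0$ without justification, i.e.\ you assume that the central element $\omega$ is not a scalar multiple of $1$. This is not covered by anything you have proved up to that point --- your only dimension estimate for ${\rm Cl}_\C(V)$ with $\dim V=2\ell+1$ is the \emph{upper} bound $2^{2\ell+1}$, which is consistent with $\omega\in\C\cdot 1$ and ${\rm Cl}_\C(V)=A\cong M_{2^\ell}(\C)$; in that scenario part $(ii)$ would be false (there would be a single irreducible representation), so the point cannot be waved away. Two quick fixes: (a) the assignment $e(v)\mapsto -e(v)$ satisfies the Clifford relations and hence extends, by the universal property $(iii)$, to an algebra automorphism $\alpha$ of ${\rm Cl}_\C(V)$ with $\alpha(\omega)=(-1)^{2\ell+1}\omega=-\omega$; if $\omega$ were a scalar it would be fixed by $\alpha$, forcing $\omega=0$ and contradicting $\omega^2=1$; or (b) exhibit two representations on $\C^{2^\ell}$ sending $e(v_i)\mapsto\gamma(e_i)$ for $i\le 2\ell$ and $e(v_{2\ell+1})\mapsto\pm Z^{\otimes\ell}$ (both satisfy the relations), under which $\omega$ acts by two \emph{different} scalars, so $\omega$ cannot be a scalar in the algebra and moreover neither $p_+$ nor $p_-$ lies in the kernel of both. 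With either patch, the rest of your odd-case argument (injectivity of $a\mapsto ap_\pm$ on the simple subalgebra $A$, the dimension count forcing ${\rm Cl}_\C(V)\cong M_{2^\ell}(\C)\oplus M_{2^\ell}(\C)$, and the identification of the two nonisomorphic simple modules) goes through as written.
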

\medskip

For a proof of this fact the reader is referred to \cite[Theorem 6.1.3]{GW}.

\medskip
\begin{remark}\label{rem:cscdd}Consider a  real vector space $V$ equipped with a   symmetric and non-degenerate  bilinear form $\beta: V\times V\rightarrow \R$.  Let $f$ be a linear map $f: V\rightarrow~{\rm End}(W)$ satisfying $f(u)f(v)+f(v)f(u)=\beta(u,v)1_W$, for all $u,v\in V$.
Using the three defining axioms for  ${\rm Cl}(V,\beta)$ it follows that $f$ can be extended to a representation for ${\rm Cl}(V,\beta)$.
\end{remark}

\section{Proof of Theorem \ref{thm:mainlowerbound1}}
\label{sec:dsvefwefwe}

In this section we give for completeness  a proof of Theorem~\ref{thm:mainlowerbound1}, as this is not stated explicitly in \cite{TS87}.  We start with a~definition.

\medskip
\begin{definition} Given $ C =(c_{xy}) \in \cornm$, we say that a family of  real vectors $\{u_x, v_y\}_{x,y}$ forms  a {\em $C$-system} if they satisfy
\[ \|u_x\|\le 1,\  \forall x,\  \|v_y\|\le 1,\ \forall y, \text{ and } c_{xy}=\la u_x,v_y\ra, \forall x,y. \]
\end{definition}
\medskip

{As it turns out, 
$C$-systems of vectors corresponding to extremal quantum correlations have interesting properties. For our purposes  we only  need the following~result:}

 \medskip
  \begin{lemma}\cite[Lemma 3.1]{TS87}\label{lem:tsirelson}
Let  $ C\in {\rm ext}({\rm Cor}(n,m))$. Then, for any $C$-system of vectors $\{u_x,v_y\}_{x,y}$ we have that 
\be\label{deferfrfer}
  \Span(\{u_x\}_x) =  \Span(\{v_y\}_y).
\ee  
 Furthermore,  there exists an integer $\tau_C\ge 1$, depending only on $C$,   such that for any $C$-system of vectors $\{u_x,v_y\}_{x,y}$ we have that  
\be\label{new3} 
{\dim \left( \Span(\{u_x\}_x) \right)=\dim \left( \Span(\{v_y\}_y) \right)}=\tau_C. 
\ee
 Also, we can find $C$-systems $\{u_x,v_y\}_{x,y}$ that lie in $\R^{\tau_C}$ (and thus  span $\R^{\tau_C}$). For this,  let $\{a_x,b_y\}_{x,y}$ be an arbitrary  $C$-system and consider the matrix $\gram(\{a_x\}_x,\{b_y\}_y)$. By \eqref{new3} and \eqref{deferfrfer}, this  is a real psd matrix of  rank $\tau_C$ and thus any Gram factorization with vectors in $\R^{\tau_C}$ gives  a $C$-system with the required properties. Lastly, note that $\rank(C)\le~\tau_C$.
\end{lemma}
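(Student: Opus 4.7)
The plan is to combine a projection argument that produces ``minimal'' $C$-systems with a perturbation argument that uses the extremality of $C$ to show every $C$-system must already be minimal, with a dimension intrinsic to $C$.

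First, without any extremality assumption, I would show that every $C$-system $\{u_x, v_y\}$ can be reduced to one with $\Span(\{u_x\}) = \Span(\{v_y\})$. Replacing each $u_x$ by its orthogonal projection onto $V := \Span(\{v_y\})$ yields $\{\hat u_x, v_y\}$, which is still a $C$-system since the inner products $\langle \hat u_x, v_y\rangle = c_{xy}$ are preserved and the norms satisfy $\|\hat u_x\| \leq \|u_x\| \leq 1$. Iterating the projection---projecting the $v_y$'s onto the new $u$-span and then alternating---the subspace dimensions form a decreasing sequence of nonnegative integers, so in finitely many steps the process terminates at a $C$-system with matching spans of some common dimension $\tau$. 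Choosing coordinates on this span realizes the reduced system in $\R^{\tau}$, and by construction its vectors span $\R^{\tau}$.

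Next, I would use extremality to show that \emph{every} $C$-system already has matching spans and that $\tau$ depends only on $C$. Suppose for contradiction that $\{u_x, v_y\}$ is a $C$-system with $\Span(\{u_x\}) \not\subseteq \Span(\{v_y\})$. Then the projection of some $u_{x_0}$ onto $\Span(\{v_y\})$ strictly decreases its norm, so $\|\hat u_{x_0}\| < \|u_{x_0}\| \leq 1$. Pick any $y_0$ with $v_{y_0} \neq 0$ and consider the perturbations $u_{x_0}^\pm := \hat u_{x_0} \pm t\, v_{y_0}$ (leaving $u_x = \hat u_x$ for $x \neq x_0$ and all $v_y$ unchanged). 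For small $t>0$ we have $\|u_{x_0}^\pm\| \leq 1$ by the slack, and the resulting correlations satisfy $c^\pm_{x_0 y} = c_{x_0 y} \pm t\langle v_{y_0}, v_y\rangle$ and $c^\pm_{xy} = c_{xy}$ for $x\neq x_0$. These two correlations lie in $\cornm$, their average is $C$, and $C^+\neq C^-$ since $\langle v_{y_0}, v_{y_0}\rangle = \|v_{y_0}\|^2 > 0$; this contradicts extremality of $C$. The symmetric argument handles the case $\Span(\{v_y\}) \not\subseteq \Span(\{u_x\})$, so both inclusions must hold. The invariance of $\tau = \tau_C$ across all $C$-systems then follows by a related argument applied to the orthogonal ``superposition'' $\{(u_x, u'_x)/\sqrt{2}, (v_y, v'_y)/\sqrt{2}\}$ of two $C$-systems of different span dimensions: its reduction via the projection step produces a $C$-system of some intermediate dimension to which the extremality obstruction applies, forcing the two original dimensions to coincide.

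Finally, writing $C = U V^{\sfT}$ where $U \in \R^{n \times \tau_C}$ and $V \in \R^{m \times \tau_C}$ are the coordinate matrices of a $C$-system in $\R^{\tau_C}$ immediately gives $\rank(C) \leq \tau_C$, and taking a rank-$\tau_C$ Gram factorization of $\gram(\{a_x\},\{b_y\})$ for any initial $C$-system produces the explicit realization in $\R^{\tau_C}$. The main obstacle I anticipate is the last step of the previous paragraph: while the ``slack perturbation'' argument for matching spans is clean once one has the projection, showing rigorously that all $C$-systems share a single dimension $\tau_C$ is more delicate and likely requires either an explicit isomorphism between minimal $C$-systems or a careful superposition-and-projection argument that genuinely produces a non-minimal system to which the previous extremality obstruction can be applied.
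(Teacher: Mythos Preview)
The paper does not prove this lemma at all; it is simply cited from Tsirelson~\cite{TS87}, so there is nothing to compare against. Your perturbation argument for \eqref{deferfrfer} is correct and is the heart of the matter: projecting onto $V=\Span(\{v_y\})$ creates slack $\|\hat u_{x_0}\|<1$ whenever $u_{x_0}\notin V$, and sliding $\hat u_{x_0}$ along some $v_{y_0}\ne 0$ (which exists since $C\ne 0$, and $0$ is not extreme in $\cornm$) writes $C$ as a nontrivial midpoint.

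Your ``main obstacle'' is a phantom. Once \eqref{deferfrfer} is established for \emph{every} $C$-system, the invariance of the dimension drops out of pure linear algebra, with no superposition trick needed. Let $W:=\Span(\{u_x\})=\Span(\{v_y\})$ and $d:=\dim W$. Writing the vectors in coordinates on $W$ gives matrices $U\in\R^{n\times d}$ and $V\in\R^{m\times d}$ whose rows span $\R^d$, hence $\rank(U)=\rank(V)=d$. Since $\ker U=\{0\}$, we have $\ker(UV^{\sfT})=\ker(V^{\sfT})$, and therefore
\[
\rank(C)=\rank(UV^{\sfT})=\rank(V^{\sfT})=d.
\]
Thus $d=\rank(C)$ for \emph{every} $C$-system, so $\tau_C:=\rank(C)$ is intrinsic to $C$ and \eqref{new3} follows immediately. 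This in fact sharpens the lemma's ``$\rank(C)\le\tau_C$'' to an equality for extreme $C$; the paper only states the inequality because that is all the downstream application needs. Your iterated-projection construction in the first paragraph is harmless but unnecessary: extremality already forces every $C$-system to be ``minimal'' in your sense, so there is nothing to reduce.
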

\medskip

We continue by stating  another result  due to Tsirelson which shows that  the operators in a  quantum representation of an extremal  quantum  correlation correspond   to a representation of an appropriate  Clifford algebra. This is the essential ingredient in the proof of Theorem \ref{thm:mainlowerbound1} given below.

\begin{theorem}\cite[Theorem 3.1]{TS87}\label{thm:tsirelson2} Let  $C=(c_{xy}) \in {\rm ext}({\rm Cor}(n,m))$
and consider a family of Hermitian operators $\{A_x\}_x,\{B_y\}_y,  \rho$ in $ \calh^d$  such that:
\bi
\item[$(i)$] $c_{xy}=\tr(A_xB_y\rho)$ for all $x,y$;
\item[$(ii)$] $A_xB_y=B_yA_x$;
\item[$(iii)$] $\rho$ is a density matrix;
\item[$(iv)$] The eigenvalues of $A_x,B_y$ are in $[-1,1]$;
\item[$(v)$] There does not exist an orthogonal  projector $P\ne I$ such
that 
\be\label{dcefe} PA_x=A_xP, \
PB_y=B_yP \text{ and } P\rho P=\rho. \ee \ei Then, for any
$C$-system of vectors {$\{ u_x \}_x$, $\{ v_y \}_{y}$} we have that
\be\label{eq:cliffordtsirelson} \{A_x,A_{x'}\}=2\la u_x,u_{x'}\ra
I_d,\  \forall x,x' \text{ and }\{B_{y},B_{y'}\}=2\la v_y,v_{y'}\ra
I_d,\  \forall y,y', \ee 
{where $\{A, B \} := AB + BA$ is the \emph{anticommutator} of $A$ and $B$.}
\end{theorem}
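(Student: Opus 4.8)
The plan is to follow Tsirelson's original argument \cite[Theorem 3.1]{TS87}, adapted to our notation. The starting point is that, by Lemma~\ref{lem:tsirelson}, all $C$-systems of vectors span spaces of the same dimension $\tau_C$; hence it suffices to prove \eqref{eq:cliffordtsirelson} for one convenient $C$-system and then transport the conclusion. So first I would fix a $C$-system $\{u_x\}_x,\{v_y\}_y$ living in $\R^{\tau_C}$ (these exist by Lemma~\ref{lem:tsirelson}) and work with it throughout; at the end one checks that replacing it by any other $C$-system preserves \eqref{eq:cliffordtsirelson}, since any two such systems are related by an orthogonal change of basis on the common span, which does not affect the inner products $\la u_x,u_{x'}\ra$ appearing on the right-hand side.

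The heart of the argument is to show that the operators $\{A_x\}_x$ generate (a representation of) the Clifford algebra determined by the Gram matrix $(\la u_x, u_{x'}\ra)$, and similarly for $\{B_y\}_y$. The key idea is to study the ``correlation defect'': for fixed $x$, consider how $\tr(A_xB_y\rho)$ behaves as one perturbs. More precisely, one uses the extremality of $C$ together with the irreducibility condition $(v)$ to show that the map $u_x \mapsto A_x$ respects the quadratic relations. The standard route is: (a) use condition $(i)$ and commutativity $(ii)$ to write $c_{xy} = \tr\big((A_x \otimes B_y)\tilde\rho\big)$ after passing to a tensor-product picture via the commuting subalgebras; (b) observe that since $\{u_x, v_y\}$ is a $C$-system spanning $\R^{\tau_C}$, for any vector $w$ in this span there is a well-defined operator $A(w) := \sum_x \lambda_x A_x$ whenever $u_x$'s satisfy $\sum_x \lambda_x u_x = w$ — one must check this is independent of the chosen representation of $w$, which follows because a linear dependence $\sum_x \lambda_x u_x = 0$ forces $\sum_x \lambda_x A_x = 0$ on the support of $\rho$, and then condition $(v)$ (minimality of the representation) upgrades this to $\sum_x \lambda_x A_x = 0$ everywhere; (c) having extended $x \mapsto A_x$ to a linear map $A(\cdot)$ on $\Span(\{u_x\}_x) = \R^{\tau_C}$, show that $A(w)A(w') + A(w')A(w) = 2\la w, w'\ra I_d$. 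This last relation, specialized to $w = u_x$, $w' = u_{x'}$, is exactly the first half of \eqref{eq:cliffordtsirelson}.

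To establish step (c) — the Clifford relation itself — I would argue as follows. Since the $A_x$ have eigenvalues in $[-1,1]$ (condition $(iv)$), the operator $I_d - A(w)^2$ is positive semidefinite for unit $w$ coming from the $C$-system; one shows it must in fact vanish. Here is where extremality of $C$ is used crucially: if $A(w)^2 \neq I_d$ for some unit $w \in \Span(\{u_x\})$, then one can construct a small perturbation of the $C$-system (equivalently, of the operators) that still realizes a quantum correlation but writes $C$ as a nontrivial convex combination of two distinct quantum correlations, contradicting $C \in {\rm ext}({\rm Cor}(n,m))$. Polarizing $A(w)^2 = \|w\|^2 I_d$ gives the anticommutator identity for all $w, w'$. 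The symmetric argument applied to $\{B_y\}_y$, using $\Span(\{v_y\}_y) = \Span(\{u_x\}_x)$ from \eqref{deferfrfer} in Lemma~\ref{lem:tsirelson}, yields the second half of \eqref{eq:cliffordtsirelson}.

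The main obstacle I anticipate is step (b) together with the extremality-perturbation argument in (c): making precise how the irreducibility/minimality hypothesis $(v)$ lets one promote relations that hold only on the range of $\rho$ to relations holding on all of $\C^d$, and rigorously constructing the perturbation that would violate extremality of $C$ if the Clifford relations failed. Both of these are where the interplay between the operator-algebraic structure and the convex-geometric hypothesis is genuinely subtle; the rest (passing to the commuting tensor picture, polarization, transporting between $C$-systems) is routine. Since this is precisely the content of \cite[Theorem 3.1]{TS87}, I would cite Tsirelson for the detailed execution and only sketch the structure above, as the statement is reproduced here solely for use in the proof of Theorem~\ref{thm:mainlowerbound1}.
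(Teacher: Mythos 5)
The paper itself does not prove this theorem: it is stated with an explicit attribution to \cite[Theorem 3.1]{TS87} and used as a black box in the appendix proof of Theorem~\ref{thm:mainlowerbound1}. Your closing proposal to sketch the structure and then cite Tsirelson for the detailed execution therefore matches the paper's treatment exactly.

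Two small remarks on the sketch, since you do offer one. First, your assertion that ``any two $C$-systems are related by an orthogonal change of basis on the common span, which does not affect the inner products $\la u_x,u_{x'}\ra$'' is not a generic fact about $C$-systems: a priori, different $C$-systems can have genuinely different Gram matrices $(\la u_x,u_{x'}\ra)$. That this cannot happen is a consequence of extremality of $C$ and is precisely what makes the right-hand side of \eqref{eq:cliffordtsirelson} well-defined; it is part of what Tsirelson's Lemma 3.1 (quoted here as Lemma~\ref{lem:tsirelson}) establishes, so it should be cited or proved rather than asserted. Second, step (a) of your sketch --- passing to a tensor-product picture via the commuting subalgebras --- is unnecessary and slightly off-target: the theorem is already formulated for commuting operators $A_x,B_y$ on a single space $\calh^d$, and Tsirelson's argument proceeds directly in that picture (the tensor-product reduction belongs to the proof of Theorem~\ref{thm:mainlowerbound1}, where one sets $A_x := M_x\otimes I$ and $B_y := I\otimes N_y$, not to the proof of this theorem). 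The rest of the structure you describe --- extending $x\mapsto A_x$ linearly via $C$-system coordinates, using condition $(v)$ to promote relations holding on the support of $\rho$ to all of $\C^d$, and invoking extremality to force $A(w)^2=\|w\|^2 I_d$ by a perturbation argument --- is a fair account of the shape of Tsirelson's proof.
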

\medskip

Using Theorem \ref{thm:tsirelson2} we  are now ready to  give a proof for Theorem \ref{thm:mainlowerbound1}.

\medskip
\begin{thmnewww}\label{thm:lowerboundandproof}
Let $C=(c_{xy}) \in {{\rm ext}(\cornm)}$  and consider a  family of   Hermitian operators
$\{M_x\}_{x}, \{N_y\}_{y}\subseteq \calh^d$ with eigenvalues  in $[-1,1]$ and a quantum state $\rho\in~\calh^{d^2}_+$ satisfying
$c_{xy}=\tr((M_x\otimes N_y)\rho),$ for all $x,y$. Then we have that
$$d\ge {\sqrt{2}^{\lfloor\rank(C)/ 2\rfloor}}.$$
\end{thmnewww}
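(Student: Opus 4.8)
The plan is to reduce the general situation — an arbitrary quantum representation $\{M_x\}_x,\{N_y\}_y,\rho$ acting on $\C^d \otimes \C^d$ — to the setting of Theorem~\ref{thm:tsirelson2}, and then invoke the representation theory of Clifford algebras (Theorem~\ref{thm:cliffrepresentations}) to extract the dimension bound. First I would set $A_x := M_x \otimes I_d$ and $B_y := I_d \otimes N_y$, which act on $\C^{d^2} \cong \C^d \otimes \C^d$; these are Hermitian with eigenvalues in $[-1,1]$, they commute ($A_x B_y = B_y A_x$), and $c_{xy} = \tr(A_x B_y \rho)$. So conditions $(i)$--$(iv)$ of Theorem~\ref{thm:tsirelson2} hold with ambient dimension $d^2$. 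The one condition that need not hold is $(v)$: there may be a nontrivial projector $P$ commuting with all $A_x, B_y$ and fixing $\rho$ in the sense $P\rho P = \rho$. To handle this I would pass to the smallest such invariant subspace: choose $P$ of minimal rank with $PA_xP = A_xP$, $PB_yP=B_yP$, $P\rho P = \rho$ (such a minimal $P$ exists since the identity works and ranks are positive integers), restrict everything to the range of $P$, and observe that on this subspace the restricted operators still satisfy $(i)$--$(iv)$ and now also $(v)$ by minimality. Note the restricted ambient dimension is $\rank(P) \le d^2$, so any lower bound proved there is a lower bound on $d^2$.

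Next I would apply Theorem~\ref{thm:tsirelson2} on the reduced space: for any $C$-system $\{u_x\}_x,\{v_y\}_y$ — and by Lemma~\ref{lem:tsirelson} we may take one spanning $\R^{\tau_C}$ — the (restricted) operators satisfy the Clifford anticommutation relations $\{A_x,A_{x'}\} = 2\la u_x,u_{x'}\ra I$ and similarly for the $B_y$. Since the $\{u_x\}_x$ span $\R^{\tau_C}$ and the bilinear form $\la\cdot,\cdot\ra$ restricted there is the (nondegenerate) standard inner product, the map $u_x \mapsto A_x$ extends linearly and, using Remark~\ref{rem:cscdd}, gives a representation of the Clifford algebra ${\rm Cl}(\R^{\tau_C}, 2\la\cdot,\cdot\ra)$ on the reduced space. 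By Theorem~\ref{thm:cliffrepresentations}, every irreducible representation of this Clifford algebra has dimension $2^{\lfloor \tau_C/2 \rfloor}$; hence any representation has dimension a multiple of $2^{\lfloor \tau_C/2 \rfloor}$, so in particular $\rank(P) \ge 2^{\lfloor \tau_C/2 \rfloor}$. Combining, $d^2 \ge \rank(P) \ge 2^{\lfloor \tau_C/2\rfloor}$, and since $\tau_C \ge \rank(C)$ (last line of Lemma~\ref{lem:tsirelson}), we get $d^2 \ge 2^{\lfloor \rank(C)/2 \rfloor}$, i.e.\ $d \ge \sqrt{2}^{\,\lfloor \rank(C)/2\rfloor}$, as claimed.

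The main obstacle I anticipate is the minimality/reduction argument establishing condition $(v)$ cleanly: one must check that restricting $A_x, B_y$ and $\rho$ to ${\rm Range}(P)$ genuinely preserves $(i)$--$(iv)$ — in particular that the restricted $\rho$ is still a density matrix, which uses $P\rho P = \rho$ together with $\tr\rho = 1$ — and that a minimal $P$ does force $(v)$ on the subspace rather than merely on the ambient space. A secondary point requiring care is the passage from "the $A_x$ satisfy Clifford relations indexed by a spanning set" to "we have a bona fide Clifford algebra representation": one should note the $A_x$ for $x$ ranging over a basis-worth of the $u_x$ determine the rest by linearity (the anticommutation relations are consistent precisely because they match the Gram matrix of the $u_x$), and then Remark~\ref{rem:cscdd} applies. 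Everything else — the tensor-product bookkeeping, the eigenvalue ranges, the arithmetic with floors — is routine.
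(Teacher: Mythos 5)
Your proposal is correct and follows essentially the same route as the paper: lift $M_x,N_y$ to commuting operators $A_x\otimes I,\, I\otimes B_y$ on $\C^{d^2}$, force condition $(v)$ of Theorem~\ref{thm:tsirelson2} by projecting to an invariant subspace, read off the Clifford anticommutation relations from a spanning $C$-system, and invoke Theorem~\ref{thm:cliffrepresentations} for the $2^{\lfloor\tau_C/2\rfloor}$ lower bound on the reduced dimension $\le d^2$. The only differences are presentational (you take a minimal projector while the paper iterates the reduction; you phrase the bilinear form via the ambient inner product while the paper writes it explicitly via the Gram matrix in coordinates), so this matches the paper's proof.
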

\medskip

\begin{proof}
For all $x$ set  $A_x:=M_x\otimes I_d \in \calh^{d^2}$ and  for all $y$ set  $B_y:=I_d\otimes N_y\in\calh^{d^2}$.
Note that conditions $(i)-(iv)$ of Theorem \ref{thm:tsirelson2}   are satisfied.  Furthermore, if there exists an orthogonal projector $P\ne I$  satisfying  \eqref{dcefe}, by restricting on the support of  the matrices  $\{ PA_{x} P\}_x,\{ PB_{y} P\}_y$ and $P\rho P$  we get a new family of operators that  satisfy conditions  $(i)-(iv)$ from Theorem \ref{thm:tsirelson2}
 that  have smaller size. This  process can be  repeated to obtain matrices  satisfying conditions $(i)-(v)$ whose size  is  at most $d^{2}$.

By Lemma \ref{lem:tsirelson} there exists a $C$-system of vectors $\{u_x,v_y\}_{x,y}$ satisfying
$$\Span(\{u_x\}_x)=\Span(\{v_y\}_y)=\R^{\tau_C}.$$
Furthermore, by Theorem \ref{thm:tsirelson2} we have
\be\label{dcdevfv}
\{A_x,A_{x'}\}=2\la u_x,u_{x'}\ra I_{d^2},\  \forall x,x' \text{ and }\{B_{y},B_{y'}\}=2\la v_y,v_{y'}\ra I_{d^2}, \ \forall y,y'.
\ee
To ease notation set $\tau:=\tau_C$ and without loss of generality  assume that $\{u_x\}_{x=1}^\tau$ is a basis for $\R^\tau$.
Set $f :\R^\tau \rightarrow \calh^{d^2}$ where $f(u_x):=A_x$, for $1\le x\le \tau$ and  extend linearly, i.e.,  $f(\lambda)=\sum_{x=1}^{\tau} \lambda_xA_x,$ for all $\lambda\in \R^\tau$, where  $\lambda=(\lambda_x)$ are the  coordinates with respect to the  $\{u_x\}_{x=1}^\tau$ basis. Using \eqref{dcdevfv} it follows that  for $\lambda,\mu\in \R^\tau$ we have
\be\label{cevrtgvr}
\{f(\lambda),f(\mu)\}=2 \lambda^\sfT \gram(\{u_x\}_{x=1}^\tau)\mu \cdot {I_{d^2}}.
\ee
Define the bilinear form $\beta:\R^\tau\times \R^\tau\rightarrow \R$ by
$$\beta(\lambda,\mu)=2 \lambda^\sfT\gram(\{u_x\}_{x=1}^\tau)\mu.$$
Note that $\beta$ is symmetric and  furthermore, since $\gram(\{u_x\}_{x=1}^\tau)$ is full-rank, $\beta$ is  also nondegenerate. By  \eqref{cevrtgvr},  the map $f$ can be extended to  a representation of
the Clifford algebra ${\rm Cl}(\R^\tau,\beta)$ (cf.   Remark \ref{rem:cscdd}). 
Any  representation  of ${\rm Cl}(\R^\tau,\beta)$ can be {decomposed} as
a direct sum of irreducible ones, which by Theorem
\ref{thm:cliffrepresentations} {have} size at least $2^{\lfloor
\tau/ 2\rfloor}$. This implies that $d^2 \ge 2^{\lfloor
\tau/2\rfloor} $ and thus $d\ge \sqrt{2}^{\lfloor \tau/2\rfloor}$.
Lastly, by Lemma~\ref{lem:tsirelson}  we have that $\tau\ge
\rank(C)$ and the proof is concluded. 
\end{proof}

\end{document}